\title{Yang-Baxter deformations and rack cohomology}
\author{Michael Eisermann}
\address{Institut Fourier, Universit\'e Grenoble I, France}
\email{Michael.Eisermann@ujf-grenoble.fr}
\urladdr{www-fourier.ujf-grenoble.fr/{\textasciitilde}eiserm}
\date{first version October 2007; this version compiled \today}
\numberwithin{equation}{section}
\theoremstyle{plain}
  \newtheorem{theorem}{Theorem}[section]
  \newtheorem{lemma}[theorem]{Lemma}
  \newtheorem{proposition}[theorem]{Proposition}
  \newtheorem{corollary}[theorem]{Corollary}
\theoremstyle{definition}
  \newtheorem{definition}[theorem]{Definition}
  \newtheorem{question}[theorem]{Question}
  \newtheorem{remark}[theorem]{Remark}
  \newtheorem{example}[theorem]{Example}
  \newtheorem*{notation}{Notation}
\newcommand{\sref}[1]{\textsection\ref{#1}}
\newcommand{\N}{\mathbb{N}}
\newcommand{\Z}{\mathbb{Z}}
\newcommand{\Q}{\mathbb{Q}}
\newcommand{\Zmod}[1]{\Z/_{\!\!#1}}
\newcommand{\K}{\mathbb{K}}                       
\newcommand{\A}{\mathbb{A}}                       
\newcommand{\m}{\mathfrak{m}}                     
\newcommand{\fps}[1]{\mathopen{[\![}#1\mathclose{]\!]}}   
\newcommand{\tensor}[1][]{\mathbin{\otimes_{#1}}} 
\newcommand{\Hom}{\operatorname{Hom}}             
\newcommand{\End}{\operatorname{End}}             
\newcommand{\tr}{\operatorname{tr}}               
\newcommand{\Aut}{\operatorname{Aut}}             
\newcommand{\Inn}{\operatorname{Inn}}             
\newcommand{\tsa}{\mathbin{\overline{\ast}}}
\newcommand{\id}{\operatorname{id}}
\newcommand{\minus}{\smallsetminus}
\newcommand{\into}{\hookrightarrow}
\newcommand{\onto}{\mathrel{\makebox[1pt][l]{$\to$}{\to}}}
\newcommand{\isoto}{\mathrel{\xrightarrow{_\sim}}}
\newcommand{\YB}{{\smash{\scriptscriptstyle\rm YB}}}
\newcommand{\Rack}{{\smash{\scriptscriptstyle\rm R}}}
\newcommand{\Diag}{{\smash{\scriptscriptstyle\rm Diag}}}
\newcommand{\Ent}{E}
\newcommand{\indices}[2]%
{{\left[\begin{matrix}#1\\#2\end{matrix}\right]}} 
\newcommand{\smallindices}[2]%
{{\left[\begin{smallmatrix}#1\\#2\end{smallmatrix}\right]}} 
\newcommand{\pindices}[2]%
{{\left(\begin{matrix}#1\\#2\end{matrix}\right)}} 
\newcommand{\smallpindices}[2]%
{{\left(\begin{smallmatrix}#1\\#2\end{smallmatrix}\right)}} 
\newlength{\imagewidth}
\newlength{\imageheight}
\newcommand{\picc}[2][1.0]{%
  \settowidth{\imagewidth}{\includegraphics[scale=#1]{#2}}
  \settoheight{\imageheight}{\includegraphics[scale=#1]{#2}}
  \parbox[c][\imageheight][c]{\imagewidth}{\includegraphics[scale=#1]{#2}}}
\begin{document} 

\begin{abstract}
  Every rack $Q$ provides a set-theoretic solution $c_Q$ of the Yang-Baxter equation
  by setting $c_Q \colon x \tensor y \mapsto y \tensor x^y$ for all $x,y \in Q$.
  This article examines the deformation theory of $c_Q$ 
  within the space of Yang-Baxter operators over a ring $\A$, 
  a problem initiated by Freyd and Yetter in 1989.
  As our main result we classify deformations in the modular case, 
  which had previously been left in suspense, and establish that 
  every deformation of $c_Q$ is gauge-equivalent to a quasi-diagonal one.  
  Stated informally, in a quasi-diagonal deformation only behaviourally equivalent elements interact.
  In the extreme case, where all elements of $Q$ are behaviourally distinct, 
  Yang-Baxter cohomology thus collapses to its diagonal part,
  which we identify with rack cohomology.  
  The latter has been intensively studied in recent years and, in the modular case,
  is known to produce non-trivial and topologically interesting Yang-Baxter deformations.
\end{abstract}


\subjclass[2000]{%
17B37, 
18D10, 
20F36, 
57M27
}

\keywords{Yang-Baxter operator, r-matrix, braid group representation, 
  deformation theory, infinitesimal deformation, Yang-Baxter cohomology}



\vspace*{-5pt}

\maketitle



\section{Introduction and statement of results} \label{sec:Introduction}


\subsection{Motivation and background}

Yang-Baxter operators (defined in \sref{sec:Definitions}) 
first appeared in theoretical physics, 
in a 1967 paper by Yang \cite{Yang:1967} on the many-body problem 
in one dimension, during the 1970s in work by Baxter \cite{Baxter:1972,Baxter:1982}
on exactly solvable models in statistical mechanics,
and later in quantum field theory \cite{Faddeev:1984}
in connection with the quantum inverse scattering method.
They have played a prominent r\^ole in knot theory and 
low-dimensional topology ever since the discovery of 
the Jones polynomial \cite{Jones:1987} in 1984. 
Attempts to systematically construct solutions of the Yang-Baxter equation 
have led to the theory of quantum groups, see Drinfel'd \cite{Drinfeld:1987} and 
Turaev, Kassel, Rosso \cite{Turaev:1988,Turaev:1994,Kassel:1995,KasselRossoTuraev:1997}.

All Yang-Baxter operators resulting from the quantum approach 
are deformations of the transposition operator 
$\tau \colon x \tensor y \mapsto y \tensor x$.
As a consequence, the associated knot invariants are of finite type 
in the sense of Vassiliev \cite{Vassiliev:1990} and Gusarov \cite{Gusarov:1991}, 
see also Birmann--Lin \cite{BirmanLin:1993} and Bar-Natan \cite{BarNatan:1995}.
These invariants continue to have a profound impact on low-dimensional topology;
their interpretation in terms of algebraic topology and classical knot theory,
however, remains difficult and most often mysterious.

As a variation of the theme, Drinfel'd \cite{Drinfeld:1990} pointed out 
the interesting special case of \emph{set-theoretic solutions} 
of the Yang-Baxter equation, see Etingof--Schedler--Soloviev \cite{ESS:1999}
and Lu--Yan--Zhu \cite{LYZ:2000}.
One class of solutions is provided by \emph{racks} or \emph{automorphic sets}
$(Q,\ast)$, which have been introduced and thoroughly studied by 
Brieskorn \cite{Brieskorn:1988} in the context of braid group actions.  
Here the operator takes the form $c_Q \colon x \tensor y \mapsto y \tensor x^y$,
where $x^y = x \ast y$ denotes the action of the rack $Q$ on itself.
The transposition $\tau$ above corresponds to the trivial action;
conjugation $x^y = y^{-1} x y$ in a group provides many non-trivial examples.

Applications to knot theory had independently been developed 
by Joyce \cite{Joyce:1982} and Matveev \cite{Matveev:1982}. 
Freyd and Yetter \cite{FreydYetter:1989} observed that the knot invariants 
obtained from $c_Q$ are the well-known colouring numbers of classical knot theory.
These invariants are not of finite type \cite{Eisermann:2000}.
Freyd and Yetter \cite{FreydYetter:1989,Yetter:2001} also initiated 
the natural question of deforming set-theoretic solutions 
within the space of Yang-Baxter operators over a ring $\A$, 
and illustrated their general approach by the simplified ansatz 
of diagonal deformations \cite[\textsection 4]{FreydYetter:1989}.
The latter are encoded by rack cohomology, which was 
independently developed by Fenn and Rourke \cite{FennRourke:1992} 
from a homotopy-theoretic viewpoint via classifying spaces.

Carter et al.\ \cite{CarterEtAl:2003} 
have applied rack and quandle cohomology to knots by constructing state-sum invariants.
These, in turn, can be interpreted in terms of classical algebraic topology 
as \emph{colouring polynomials} associated to knot group representations
\cite{Eisermann:2007}. 

For more recent developments and open questions 
see \sref{sec:OpenQuestions} at the end of this article.

\subsection{Yang-Baxter deformations}

In the present article we continue the study of Yang-Baxter 
deformations of racks linearized over a ring $\A$, 
as initiated by Freyd and Yetter \cite{FreydYetter:1989}.
Detailed definitions will be given in \sref{sec:Definitions} 
below, in particular we will review Yang-Baxter operators
(\sref{sub:YangBaxterOperators}) and set-theoretic solutions
$c_Q$ coming from racks (\sref{sub:QuandlesAndRacks}).
In this introduction we merely recall the basic definitions 
in order to state our main result.

\begin{notation}[rings and modules]
  Throughout this article $\A$ denotes a commutative ring with unit.
  All modules will be $\A$-modules, all maps between modules will be 
  $\A$-linear, and all tensor products will be formed over $\A$.
  For every $\A$-module $V$ we denote by $V^{\tensor n}$ 
  the tensor product $V \tensor \cdots \tensor V$ of $n$ copies of $V$.
  Given a set $Q$ we denote by $\A{Q}$ the free $\A$-module with basis $Q$.  
  We identify the $n$-fold tensor product $(\A{Q})^{\tensor n}$ with $\A{Q^n}$.
  In particular, this choice of bases allows us to identify $\A$-linear maps 
  $\A{Q^n} \to \A{Q^n}$ with matrices $Q^n \times Q^n \to \A$.

  For the purposes of deformation theory we equip $\A$ 
  with a fixed ideal $\m \subset \A$.  Most often we require that $\A$ be complete 
  with respect to $\m$, that is, we assume that the natural map 
  $\A \to \varprojlim \A/\m^n$ is an isomorphism.
  A typical setting is the power series ring $\K\fps{h}$ 
  over a field $\K$, equipped with its maximal ideal $\m = (h)$,
  or the ring of $p$-adic integers $\Z_{p} = \varprojlim \Zmod{p^n}$
  with its maximal ideal $(p)$.
\end{notation}

\begin{notation}[racks]
  A \emph{rack} or \emph{automorphic set} $(Q,\ast)$
  is a set $Q$ equipped with an operation $\ast \colon Q \times Q \to Q$
  such that every right translation $x \mapsto x \ast y$ 
  is an automorphism of $(Q,\ast)$.  This is equivalent to saying
  that the $\A$-linear map $c_Q \colon \A{Q} \tensor \A{Q} \to \A{Q} \tensor \A{Q}$
  defined by $c_Q \colon x \tensor y \mapsto y \tensor (x \ast y)$ for all $x,y \in Q$
  is a Yang-Baxter operator over the ring $\A$.
  
  Two rack elements $y,z \in Q$ are called \emph{behaviourally equivalent},
  denoted $y \equiv z$, if they satisfy $x \ast y = x \ast z$ for all $x \in Q$.
  This is equivalent to saying that $y,z$ have the same image
  under the inner representation $\rho \colon Q \to \Inn(Q)$.
  As usual, a matrix $f \colon Q^n \times Q^n \to \A$
  is called \emph{diagonal} if $f\smallindices{x_1,\dots,x_n}{y_1,\dots,y_n}$
  vanishes whenever $x_i \ne y_i$ for some index $i=1,\dots,n$.
  It is called \emph{quasi-diagonal} if $f\smallindices{x_1,\dots,x_n}{y_1,\dots,y_n}$
  vanishes whenever $x_i \not\equiv y_i$ for some index $i=1,\dots,n$.
\end{notation}

Quasi-diagonal maps play a crucial r\^ole in the classification
of deformations:

\begin{theorem} \label{thm:QuasiDiagonalDeformation}
  If the ring $\A$ is complete with respect to the ideal $\m$, 
  then every Yang-Baxter deformation $c$ of $c_Q$ over $(\A,\m)$
  is equivalent to a quasi-diagonal deformation.
  More explicitly this means that $c$ is conjugated over $(\A,\m)$ 
  to a deformation of the form $c_Q \circ (\id^{\tensor 2} + f)$ where 
  the deformation term $f \colon \A{Q^2} \to \m{Q^2}$ is quasi-diagonal.
\end{theorem}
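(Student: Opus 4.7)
The plan is to proceed by $\m$-adic induction. Since $c_Q$ is invertible and $c \equiv c_Q \pmod{\m}$, one can write $c$ uniquely as $c = c_Q \circ (\id^{\tensor 2} + f)$ for some $f \colon \A{Q^2} \to \m{Q^2}$. A gauge transformation by $\alpha \tensor \alpha$, where $\alpha = \id + g$ with $g \colon \A{Q} \to \m{Q}$, yields another Yang-Baxter deformation which, again by invertibility of $c_Q$, may be written as $c_Q \circ (\id^{\tensor 2} + f')$ for some new $f'$. The aim is to construct a sequence of such gauge transformations whose infinite product, convergent $\m$-adically by completeness, brings $f$ into quasi-diagonal form.

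Inductively, suppose $f$ has already been made quasi-diagonal modulo $\m^n$ for some $n \ge 1$. Let $\bar{f}$ denote the reduction of its non-quasi-diagonal part in $\End(\A{Q^2}) \tensor (\m^n/\m^{n+1})$. Expanding the Yang-Baxter equation for $c = c_Q \circ (\id^{\tensor 2} + f)$ modulo $\m^{n+1}$ forces $\bar{f}$ to satisfy a linear constraint, namely a $2$-cocycle condition in the (linearized) Yang-Baxter deformation complex of $c_Q$. The induction step is then completed by finding $g \in \End(\A{Q}) \tensor (\m^n/\m^{n+1})$ such that conjugation by $(\id + g)^{\tensor 2}$ produces exactly the correction required to eliminate the non-quasi-diagonal part of $f$ modulo $\m^{n+1}$; equivalently, $\bar{f}$ must be shown to be a coboundary of such a $g$ in the deformation complex.

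The main obstacle is therefore a cohomological vanishing statement: at the linearized level, every non-quasi-diagonal $2$-cocycle is the coboundary of a $1$-cochain. The plan is to exploit the explicit formula $c_Q(x \tensor y) = y \tensor (x \ast y)$ together with the defining property of behavioural equivalence, $y \equiv z \Rightarrow x \ast y = x \ast z$ for all $x \in Q$, to produce an explicit primitive. Concretely, one decomposes the cochain spaces according to the behavioural equivalence relation on $Q$, verifies that the Yang-Baxter differential respects this decomposition in a suitable sense, and then exhibits for each non-quasi-diagonal cocycle an explicit $1$-cochain $g$, supported on pairs of behaviourally distinct elements, whose coboundary recovers it. Once this per-step correction is available, the successive gauge transformations $\alpha_1, \alpha_2, \ldots$ converge $\m$-adically to a single automorphism $\alpha$ of $\A{Q}$, and the limiting deformation is quasi-diagonal by construction.
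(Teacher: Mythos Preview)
Your high-level strategy is exactly the one the paper follows: write $c = c_Q\circ(\id^{\tensor 2}+f)$, proceed by $\m$-adic induction, at each step isolate the non-quasi-diagonal part $\bar f$ living in $\m^n/\m^{n+1}$, gauge it away, and pass to the limit by completeness. So the architecture is right.

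There are, however, two genuine gaps.

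\textbf{The cocycle step is not automatic.} You write that ``expanding the Yang-Baxter equation modulo $\m^{n+1}$ forces $\bar f$ to satisfy a $2$-cocycle condition''. But for $n\ge 2$ the Yang-Baxter equation for $c_Q\circ(\id+f)$ contains nonlinear terms in $f$ of total degree $\le n$, and these do not obviously decouple from the linear contribution of $\bar f$. The paper handles this by a \emph{separation trick}: one observes that the quasi-diagonal truncation $\bar c = c_Q\circ(\id+f-\bar f)$ has a Yang-Baxter defect that is quasi-diagonal (being a composite of quasi-diagonal maps), while the difference between the defects of $c$ and $\bar c$ equals $d^2\bar f$, which has zero quasi-diagonal part (since the simple retraction $r$ commutes with $d$ and $r(\bar f)=0$). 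Equating the two forces both to vanish, so $\bar c$ is Yang-Baxter and $d^2\bar f=0$. This argument is short but not obvious, and your sketch does not supply it.

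\textbf{The cohomological vanishing is the entire content.} Your induction step requires that every $2$-cocycle with zero quasi-diagonal part be a coboundary; equivalently, that the inclusion $C_\Delta^*\hookrightarrow C_\YB^*$ induce an isomorphism on $H^2$. This is precisely Theorem~\ref{thm:QuasiDiagonalCohomology}, and it is the technical heart of the paper. Your plan---``decompose cochains by behavioural equivalence, verify the differential respects the decomposition, exhibit an explicit primitive''---is too vague to be a proof. The differential does \emph{not} simply respect the block decomposition in any sense strong enough to make the argument immediate; indeed the paper remarks that the obvious retraction $r\colon C_\YB^*\to C_\Delta^*$ is a chain map but is \emph{not} a homotopy equivalence. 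What is actually needed is a chain homotopy between $\id_{C_\YB}$ and a projection onto $C_\Delta$, and the paper constructs this by introducing an auxiliary filtration $C_\YB^*=C_0^*\supset C_1^*\supset\cdots\supset C_\Delta^*$ (quasi-diagonal in the last $m$ variables) together with a non-canonical map $\psi$ on the complement of the quasi-diagonal, and then builds explicit homotopies $s_m^n$ step by step. The verification that these homotopies do what is claimed occupies several pages of index computations and is where the defining property of behavioural equivalence is actually used. Your proposal does not indicate awareness of this difficulty, nor of the filtration that resolves it.

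In short: the $\m$-adic skeleton is correct and matches the paper, but the proof as written defers both substantive points to unproved assertions.
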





There are thus two extreme cases in the deformation theory of racks:

\begin{enumerate}

\item \label{item:ExtremeDeformability}
  In the one extreme the rack $Q$ is trivial,
  whence $\rho \colon Q \to \Inn(Q)$ is trivial 
  and all elements of $Q$ are behaviourally equivalent.
  This is the initial setting in the theory of quantum invariants 
  and we cannot add anything new here.

\item \label{item:ExtremeRigidity}
  In the other extreme, where $\rho \colon Q \to \Inn(Q)$ is injective,
  all elements of $Q$ are behaviourally distinct,
  and every deformation of $c_Q$ is equivalent to a diagonal deformation.
  This is the setting of rack cohomology and colouring invariants. 

\end{enumerate}

This result confirms a plausible observation:
the more innner symmetries $Q$ has, the less deformations $c_Q$ admits.
Our theorem makes the transition between the two extremes precise and 
quantifies the degree of deformability of set-theoretic Yang-Baxter operators.

\begin{example}
  Consider a group $(G,\cdot)$ that is generated by 
  one of its conjugacy classes $Q \subset G$.
  Then $(Q,\ast)$ is a rack with respect 
  to conjugation $x \ast y = y^{-1} \cdot x \cdot y$,
  and we have a natural isomorphism $\Inn(Q,\ast) \cong \Inn(G,\cdot)$.
  If the centre of $G$ is trivial, then 
  $\rho \colon G \isoto \Inn(G)$ is a group isomorphism.
  For the operator $c_Q$ 
  the injectivity of $\rho \colon Q \to \Inn(Q)$ implies that 
  every Yang-Baxter deformation of $c_Q$ 
  is equivalent to a diagonal deformation.
  If, moreover, the order $|G|$ is finite and invertible 
  in $\A$, then $c_Q$ is rigid over $\A$.
\end{example}

As pointed out above, diagonal deformations have received much attention
over the last 20 years \cite{FreydYetter:1989,FennRourke:1992,Yetter:2001,CarterEtAl:2003}.
It is reassuring that Theorem \ref{thm:QuasiDiagonalDeformation} justifies
this short-cut in the case where $\rho \colon Q \to \Inn(Q)$ is injective.
In general, however, the simplified ansatz of diagonal deformations 
may miss some interesting Yang-Baxter deformations, namely
those that are quasi-diagonal but not diagonal.
For more detailed examples and applications see \sref{sec:Examples}.

\subsection{Yang-Baxter cohomology}

Our approach to proving Theorem \ref{thm:QuasiDiagonalDeformation}
follows the classical paradigm of studying algebraic deformation theory 
via cohomology, as expounded by Gerstenhaber \cite{Gerstenhaber:1964}.
Since it may be of independent interest, we state here our main 
cohomological result, which in degree $2$ proves the infinitesimal 
version of Theorem \ref{thm:QuasiDiagonalDeformation}.

In the previous article \cite{Eisermann:2005} 
I introduced Yang-Baxter cohomology $H_\YB^*(c_Q;\m)$,
which encodes infinitesimal deformations of $c_Q$
over a ring $\A$ with respect to the ideal $\m$ 
(\sref{sub:YangBaxterDeformations}).
There I calculated the second cohomology $H_\YB^2(c_Q;\m)$ under 
the hypothesis that the order of inner automorphism group 
$\Inn(Q)$ is finite and invertible in the ring $\A$.
The main application was to finite racks $Q$
deformed over the ring $\A = \Q\fps{h}$.
In many cases the results of \cite{Eisermann:2005} 
imply that $c_Q$ is rigid over $\Q\fps{h}$.

In the present article we calculate Yang-Baxter cohomology $H_\YB^*(c_Q;\m)$ 
in the modular case, which had previously been left in suspense
\cite[Question 39]{Eisermann:2005}.
As our main result we establish the following classification; 
for detailed definitions and proofs we refer to \sref{sec:HomotopyRetraction}.

\begin{theorem} \label{thm:QuasiDiagonalCohomology}
  The quasi-diagonal subcomplex $C_\Delta^*(c_Q;\m) \subset C_\YB^*(c_Q;\m)$ is a homotopy retract, 
  whence the induced map $H^*_\Delta(c_Q;\m) \to H_\YB^*(c_Q;\m)$ is an isomorphism.
\end{theorem}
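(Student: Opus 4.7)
The plan is to construct an explicit deformation retraction at the cochain level: a projection $\pi \colon C_\YB^*(c_Q;\m) \to C_\Delta^*(c_Q;\m)$ splitting the inclusion $\iota$, together with a chain homotopy $h$ satisfying $\id - \iota\pi = dh + hd$ and $\pi\iota = \id$. Such data automatically yield the desired cohomology isomorphism, so everything reduces to producing a consistent pair $(\pi,h)$.

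The natural starting point is to decompose each cochain group according to behavioural-equivalence patterns. For $(x,y) \in Q^n \times Q^n$, define the \emph{defect set} $D(x,y) = \{\, i : x_i \not\equiv y_i \,\} \subseteq \{1,\dots,n\}$ and let $C_D^n$ be the submodule of cochains supported on pairs with defect exactly $D$. Then $C_\YB^n = \bigoplus_D C_D^n$ with $C_\Delta^n = C_\emptyset^n$, and the obvious candidate for $\pi$ is projection onto the $D = \emptyset$ summand. The substantive content of the theorem is concentrated in producing a matching homotopy $h$.

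The core technical step is to establish a triangularity property of the Yang-Baxter coboundary with respect to the filtration by $|D|$. Using the formula $c_Q(x \tensor y) = y \tensor x^y$ together with the observation that the right action $x \mapsto x^y$ depends only on the equivalence class of $y$, one aims to decompose $d = d_0 + d_+$, where $d_0$ preserves the defect stratum and $d_+$ strictly decreases $|D|$. This already implies that $C_\Delta^*$ is a subcomplex, and more importantly it places the problem in the framework of the basic perturbation lemma: first build a preliminary homotopy $h_0$ killing the $d_0$-action on $\bigoplus_{D \ne \emptyset} C_D^n$, then correct order by order to absorb $d_+$. The iteration terminates in each cochain degree $n$ because $|D| \le n$ is a finite invariant; the $\m$-adic completeness of $\A$ is only needed for the exponentiated statement Theorem~\ref{thm:QuasiDiagonalDeformation}.

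The principal obstacle will be the triangularity itself. Verifying that every term in the Yang-Baxter coboundary either preserves the defect pattern or strictly reduces $|D|$ requires a careful term-by-term combinatorial analysis of the face pieces entering $d$, cross-referenced with the definition of behavioural equivalence and the equivariance this induces on $c_Q$ and its partial compositions. This is the same bookkeeping that underlies the geometric Theorem~\ref{thm:QuasiDiagonalDeformation}: once the leading-term structure is pinned down, the construction of $\pi$ and $h$ becomes a routine perturbation argument.
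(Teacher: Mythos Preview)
Your high-level strategy---filter, check triangularity, then invoke a perturbation-type argument---is reasonable, but the proposal misidentifies where the difficulty lies and leaves the essential step unaddressed.

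First, a sign error in the triangularity: with your grading by defect size $|D|$, the coboundary sends defect-$k$ cochains to defect-$k$ and defect-$(k{+}1)$, not downward. (Check the first face term $f(\hat x^{(i)};\hat y^{(i)})\cdot\id\smallindices{x_i^{x_{i+1}\cdots}}{y_i^{y_{i+1}\cdots}}$ with $i\in D(x,y)$: when some $x_l\not\equiv y_l$ for $l>i$ the Kronecker factor can survive even though $x_i\not\equiv y_i$.) This is not fatal---it just flips the filtration---but it does mean that $C_\YB^* = C_\Delta^* \oplus B^*$ already splits as complexes, where $B^* = \bigoplus_{|D|\ge 1} C_D^*$. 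The triangularity you flag as ``the principal obstacle'' is therefore exactly the content of the elementary retraction in Proposition~\ref{prop:QuasiRectraction}, and it is \emph{not} the hard part.

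The real content of the theorem is showing that $B^*$ is contractible, and this is precisely the step you defer as ``routine perturbation.'' A perturbation argument still requires, as input, a contracting homotopy $h_0$ for the associated-graded complex $(\bigoplus_{|D|=k}C_D^*,d_0)$ in each level $k\ge 1$. You give no construction of $h_0$, and there is no formal reason it should exist: it is a rack-theoretic fact, not a generality. The paper's key idea, which your outline is missing, is that whenever $x_k\not\equiv y_k$ one can choose a \emph{witness pair} $(u,v)=\psi(x_k,y_k)$ with $u\ne v$ but $u^{x_k}=v^{y_k}$; the homotopy $s^n_m$ then \emph{inserts} this pair in the slot just to the left of position $k$, and a direct (and somewhat delicate) calculation shows that $d\,s - s\,d$ kills the offending coordinate. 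This witness construction is what makes the homotopy exist at all, and it does not emerge from any abstract perturbation formalism.

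Finally, the paper filters not by $|D|$ but by the \emph{position} of the rightmost defect (cochains quasi-diagonal in the last $m$ variables). This is not cosmetic: it is what allows the homotopy to be written down one coordinate at a time and the verification to close up; with your $|D|$-filtration one would still have to supply the same witness idea, and the bookkeeping does not obviously simplify. The paper also notes explicitly (Remark~\ref{rem:Equivariance}) that the naive projection onto $D=\emptyset$ is \emph{not} the eventual homotopy retraction---extra correction terms appear because the witness map $\psi$ is not equivariant---so your proposed $\pi$ would in any case have to be modified along the way.
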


Notice that, contrary to \cite{Eisermann:2005}, 
we no longer require the rack $Q$ to be finite,
nor do we impose any restrictions on the characteristic of 
the ring $\A$.

\begin{remark}
  Yang-Baxter cohomology includes rack cohomology $H_\Rack^*(Q;\Lambda)$
  as its diagonal part, as explained in \sref{sec:Diagonal},
  where $\Lambda$ is a module over some ring $\K$.
  If $|\Inn(Q)|$ is invertible in $\K$, then $H_\Rack^*(Q;\Lambda)$ 
  is trivial in a certain sense \cite{EtingofGrana:2003}.
  In the modular case, however, it leads to non-trivial 
  and topologically interesting Yang-Baxter deformations
  (see Example \ref{exm:A5} below).
  It follows that Yang-Baxter cohomology of racks must
  be at least as complicated, and the modular case 
  stood out as a difficult yet promising problem.
  
  Theorem \ref{thm:QuasiDiagonalCohomology} solves this problem:
  it shows that the right object to study is the quasi-diagonal 
  subcomplex $C_\Delta^*$, situated between the strictly diagonal 
  complex $C_\Diag^*$ and the full Yang-Baxter complex $C_\YB^*$, 
  i.e., we have $C_\Diag^* \subset C_\Delta^* \subset C_\YB^*$.
  We will see that the inclusion $C_\Diag^* \subset C_\YB^*$ 
  allows for a retraction $C_\YB^* \onto C_\Diag^*$, which entails
  that $H_\Diag^*$ is a direct summand of $H_\YB^*$.  In general, however,
  this is not a homotopy retraction and $H_\Diag^* \subsetneqq H_\YB^*$.
  The inclusion $\iota \colon  C_\Delta^* \subset C_\YB^*$ also
  allows for a retraction $\pi \colon C_\YB^* \onto C_\Delta^*$,
  such that $\pi \circ \iota = \id_\Delta$, and our main result 
  is the construction of a homotopy $\iota \circ \pi \simeq \id_\YB$.
\end{remark}
  

\begin{remark}
  Again we have two extreme cases that are particularly clear-cut:
  \begin{enumerate}
  \item 
    In the one extreme, if $Q$ is trivial, then all elements of $Q$ 
    are behaviourally equivalent.  In this case we trivially have $C_\Delta = C_\YB$.
  \item
    If $\rho \colon Q \to \Inn(Q)$ is injective,
    then all elements of $Q$ are behaviourally distinct.
    In this case quasi-diagonal means diagonal, whence $C_\Delta = C_\Diag$.
  \end{enumerate}

  In general $C_\Delta^*$ lies strictly between $C_\Diag^*$ and $C_\YB^*$.
  Even if $C_\Delta$ collapses to $C_\Diag$, this restrictive situation 
  is in general not rigid, and interesting deformations do arise 
  in the modular case (see Example \ref{exm:A5}).

  Depending on the structure of the rack $Q$, 
  the quasi-diagonal subcomplex $C_\Delta^*(c_Q;\m)$
  can still be quite big, but in any case collapsing 
  the full Yang-Baxter complex $C_\YB^*(c_Q;\m)$
  to its quasi-diagonal subcomplex greatly simplifies the problem.
  In practical terms it reduces the complexity 
  from $|Q|^4$ unknowns to the order of $|Q|^2$ unknowns, 
  which in many cases makes it amenable to computer calculations.
  See \sref{sec:Examples} for illustrating examples.
\end{remark}

\subsection{How this article is organized}

Section \ref{sec:Definitions} recollects the relevant definitions 
concerning Yang-Baxter operators, their deformations, 
and the corresponding cohomology theory. 
It also gives explicit formulae in the case 
of racks, which is our main focus here.
Section \ref{sec:Diagonal} identifies diagonal
deformation with rack cohomology. 
Section \ref{sec:QuasiDiagonal} clarifies the relationship 
between diagonal deformations and rack cohomology,
and introduces quasi-diagonal deformations.
Section \ref{sec:HomotopyRetraction} proves our main result
in the infinitesimal case, by constructing a homotopy retraction
from the Yang-Baxter complex to its quasi-diagonal subcomplex.
Section \ref{sec:CompleteDeformations} extends the infinitesimal
result to complete deformations, and Section \ref{sec:Examples}
provides some examples and applications.
We conclude with some open question
in Section \ref{sec:OpenQuestions}.


\section{Yang-Baxter operators, deformations, and cohomology} \label{sec:Definitions}

\subsection{Yang-Baxter operators} \label{sub:YangBaxterOperators}


\begin{definition}
  A \emph{Yang-Baxter operator} 
  is an automorphism $c\colon V \tensor V \to V \tensor V$ satisfying
  the \emph{Yang-Baxter equation}, also called \emph{braid relation},
  \begin{equation} \label{eq:YangBaxterEquation}
    (c \tensor \id_V)(\id_V \tensor c)(c \tensor \id_V) = 
    (\id_V \tensor c)(c \tensor \id_V)(\id_V \tensor c) 
    \qquad\text{in}\quad \Aut_\A(V^{\tensor3}).
  \end{equation}
\end{definition}

This equation first appeared in theoretical physics (Yang \cite{Yang:1967}, 
Baxter \cite{Baxter:1972,Baxter:1982}, Faddeev \cite{Faddeev:1984}).  
It also has a very natural interpretation in terms of Artin's braid group $B_n$ 
\cite{Artin:1947,Birman:1974} and its tensor representations:  
the automorphisms $c_1,\dots,c_{n-1} \colon V^{\tensor n} \to V^{\tensor n}$ defined by 
\[
c_i = \id_{\smash{V}}^{\tensor(i{-}1)} 
\;\tensor\; c \;\tensor\; \id_{\smash{V}}^{\tensor(n{-}i{-}1)} ,
\qquad\text{ or in graphical notation }
c_i = \picc{braid+generator} , 
\]
satisfy the well-known braid relations
\begin{xalignat*}{2}
  c_i c_j c_i & = c_j c_i c_j \quad\text{if $|i-j| = 1$,} 
  &
  \picc{braid-relation3a} & = \picc{braid-relation3b},
  \\
  c_i c_j & = c_j c_i \quad\text{if $|i-j| \ge 2$,}
  & 
  \picc{braid-relation4a} & = \picc{braid-relation4b}.
\end{xalignat*}


\begin{remark}
  A graphical notation for tensor calculus was first used 
  by Penrose \cite{Penrose:1971};  for a brief discussion 
  of its history see \cite{JoyalStreet:1991}.
  This notation has the obvious advantage to appeal to our geometric vision.
  More importantly, it incorporates a profound relationship with knot theory,
  and its rigorous formulation in terms of tensor categories directly 
  leads to knot invariants. 
  More explicitly, Yang-Baxter operators induce invariants 
  of knots and links in $\mathbb{S}^3$ as follows,
  see Turaev \cite[chap.\,I]{Turaev:1994} 
  or Kassel \cite[chap.\,X]{Kassel:1995}.
  \[
  \begin{CD}
    B_n @>\text{Yang-Baxter}>\text{representation}> \Aut(V^{\tensor n}) \\
    @V{\text{closure}}VV @VV{\text{trace}}V \\
    \{\text{links}\} @>>{\text{invariant}}> \A
  \end{CD}
  \]

  Each link $L$ can be presented as the closure of 
  some braid. 
  This braid acts on $V^{\tensor n}$ as defined above, and
  a suitably deformed trace maps it to the ring $\A$.
  In favourable cases the result does not depend on the choice 
  of braid, and thus defines an invariant of the link $L$.
\end{remark}

\subsection{Quandles and racks} \label{sub:QuandlesAndRacks}

In every group $(G,\cdot)$ the conjugation $a \ast b = b^{-1} \cdot a \cdot b$
enjoys the following properties:
\begin{enumerate}
\item[(Q1)]
  For every element $a$ we have $a \ast a = a$. 
  \hfill (idempotency)
\item[(Q2)]
  Every right translation $\varrho(b) \colon a \mapsto a \ast b$ is a bijection.
  \hfill (right invertibility)
\item[(Q3)]
  For all $a,b,c$ we have $(a \ast b) \ast c = (a \ast c) \ast (b \ast c)$.
  \hfill (self-distributivity)
\end{enumerate}

Taking these properties as axioms, Joyce \cite{Joyce:1982} defined 
a \emph{quandle} to be a set $Q$ equipped with a binary operation
$\ast \colon Q \times Q \to Q$ satisfying (Q1--Q3). 
Independently, Matveev \cite{Matveev:1982} studied 
the equivalent notion of \emph{distributive groupoid}.
Following Brieskorn \cite{Brieskorn:1988}, an \emph{automorphic set} 
is only required to satisfy (Q2--Q3): these two axioms are equivalent 
to saying that every right translation is an automorphism of $(Q,\ast)$. 
The shorter term \emph{rack} was suggested 
by Fenn and Rourke \cite{FennRourke:1992},
going back to the terminology used by J.H.\,Conway 
in correspondence with G.C.\,Wraith in 1959.
Such structures appear naturally in the study of braid actions 
\cite{Brieskorn:1988} and provide set-theoretic solutions of 
the Yang-Baxter equation \cite{Drinfeld:1990}:

\begin{proposition}
  Given a set $Q$ with binary operation $\ast \colon Q \times Q \to Q$,
  we can consider the free module $V = \A{Q}$ with basis $Q$ over $\A$
  and define the operator 
  \[
  c_Q \colon \A{Q}\tensor\A{Q} \to \A{Q}\tensor\A{Q} \quad\text{by}\quad
  a \tensor b \mapsto b \tensor (a \ast b) \quad\text{for all}\quad a,b \in Q.
  \]
  Then $c_Q$ is a Yang-Baxter operator if and only if $Q$ is a rack.
  \qed
\end{proposition}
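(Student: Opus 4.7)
The plan is to verify the two required properties of $c_Q$ -- invertibility and the braid relation -- directly on basis elements. Since $Q$ is a basis of $\A{Q}$, any $\A$-linear endomorphism of $\A{Q^n}$ that permutes this basis is an automorphism precisely when the induced set map on $Q^n$ is a bijection, and equality of two such linear maps is equivalent to equality of the corresponding set maps. This reduces the entire statement to two set-theoretic checks.

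First, I would chase a generic basis vector $a \tensor b \tensor c$ through both sides of the Yang-Baxter equation \eqref{eq:YangBaxterEquation}. A direct computation, applying $c_Q \colon x \tensor y \mapsto y \tensor (x \ast y)$ three times on each side, yields
\[
(c_Q \tensor \id)(\id \tensor c_Q)(c_Q \tensor \id)(a \tensor b \tensor c) = c \tensor (b \ast c) \tensor ((a \ast b) \ast c),
\]
\[
(\id \tensor c_Q)(c_Q \tensor \id)(\id \tensor c_Q)(a \tensor b \tensor c) = c \tensor (b \ast c) \tensor ((a \ast c) \ast (b \ast c)).
\]
These two elements agree for all $a,b,c \in Q$ if and only if $(a \ast b) \ast c = (a \ast c) \ast (b \ast c)$ for all $a,b,c \in Q$, which is precisely the self-distributivity axiom (Q3).

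Second, I would analyze when $c_Q$ is an automorphism of $\A{Q^2}$. Since $c_Q$ sends the basis element $a \tensor b$ to $b \tensor (a \ast b)$, it is induced by the set map $\phi \colon Q^2 \to Q^2$, $(a,b) \mapsto (b, a \ast b)$, and is an automorphism iff $\phi$ is a bijection. Given any target $(b, d) \in Q^2$, we need a unique $a \in Q$ with $a \ast b = d$; existence and uniqueness of such an $a$ for every $(b,d)$ is exactly the statement that each right translation $\varrho(b) \colon a \mapsto a \ast b$ is a bijection, that is, axiom (Q2). When (Q2) holds, an explicit inverse is given by $b \tensor d \mapsto \varrho(b)^{-1}(d) \tensor b$.

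Combining the two steps, $c_Q$ is a Yang-Baxter operator iff both (Q2) and (Q3) are satisfied, which is precisely the definition of a rack. There is no real obstacle here; the only point demanding a little care is invertibility, which must be deduced from right-invertibility of $\ast$ rather than from any finiteness or dimension argument (so one cannot shortcut via ``injective implies bijective''). Notably the idempotency axiom (Q1) plays no role, confirming that the natural hypothesis is that $Q$ be a rack, not necessarily a quandle.
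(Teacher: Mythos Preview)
Your proof is correct and is precisely the standard direct verification that the paper has in mind: the proposition is stated with a bare \qed\ and no proof, so the intended argument is exactly the basis-element chase you carry out, reducing the braid relation to self-distributivity (Q3) and invertibility to right-invertibility (Q2).
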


Throughout this article we will use the following notation. 
A \emph{homomorphism} between two racks $(Q,\ast)$ and $(Q',\ast')$
is a map $\phi \colon Q \to Q'$ satisfying 
$\phi(a \ast b) = \phi(a) \ast' \phi(b)$ for all $a,b\in Q$.
Racks and their homomorphisms form a category.

The \emph{automorphism group} $\Aut(Q)$ 
consists of all bijective homomorphisms $\phi \colon Q \to Q$.  
We adopt the convention that automorphisms of $Q$ act on the right, 
written $a^\phi$, which means that their composition $\phi\psi$ 
is defined by $a^{(\phi\psi)} = (a^\phi)^\psi$ for all $a \in Q$.

Each $a \in Q$ defines an automorphism $\rho(a) \in \Aut(Q)$ defined by $x \mapsto x \ast a$.
For every $\phi \in \Aut(Q)$ we have $\rho(a^\phi) = \rho(a)^\phi$. 
The group $\Inn(Q)$ of \emph{inner automorphisms} is the normal subgroup 
of $\Aut(Q)$ generated by all right translations $\rho(a)$, where $a \in Q$.
The map $\rho \colon Q \to \Inn(Q)$ is the \emph{inner representation} of $Q$.
It satisfies $\rho(a \ast b) = \rho(a) \ast \rho(b)$,
that is, it maps the operation of the rack $Q$
to conjugation in the group $\Inn(Q)$.


\begin{notation}
  In view of the representation $\rho \colon Q \to \Inn(Q)$, 
  we often write $a^b$ for the operation $a^{\rho(b)} = a \ast b$.
  Conversely, it will sometimes be convenient to write $a \ast b$ 
  for the conjugation $a^b = b^{-1} a b$ in a group.
  In neither case will there be any danger of confusion.
\end{notation}

\begin{definition}
  Two elements $x,y \in Q$ are \emph{behaviourally equivalent}
  if $a \ast x = a \ast y$ for all $a \in Q$.
  This means that $\rho(x) = \rho(y)$,
  and will be denoted by $x \equiv y$ for short.
\end{definition}


\subsection{Deformations and Yang-Baxter cohomology} \label{sub:YangBaxterDeformations}

We are interested here in set-theoretic solutions of the Yang-Baxter equation
and their deformations within the space of Yang-Baxter operators over some ring.
A typical setting is the power series ring $\K\fps{h}$ over a field $\K$, 
equipped with its maximal ideal $\m = (h)$.
In positive characteristic it is equally natural to consider
the ring of $p$-adic integers $\Z_{p} = \varprojlim \Zmod{p^n}$ 
with its maximal ideal $(p)$.

\begin{definition}
  We fix an ideal $\m$ in the ring $\A$.  Consider an $\A$-module $V$ 
  and a Yang-Baxter operator $c \colon V \tensor V \to V \tensor V$.

  A map $\tilde{c} \colon V \tensor V \to V \tensor V$ is called 
  a \emph{Yang-Baxter deformation} of $c$ with respect to $\m$
  if $\tilde{c}$ is itself a Yang-Baxter operator and 
  satisfies $\tilde{c} \equiv c$ modulo $\m$.

  An \emph{equivalence transformation}, 
  or \emph{gauge equivalence} with respect to $\m$,
  is an automorphism $\alpha \colon V \to V$ 
  satisfying $\alpha \equiv \id_V$ modulo $\m$.

  Two Yang-Baxter operators $c$ and $\tilde c$ 
  are called \emph{equivalent} 
  if there exists an equivalence transformation $\alpha \colon V \to V$ such that 
  $\tilde c = (\alpha\tensor\alpha)^{-1} \circ c \circ (\alpha\tensor\alpha)$.
\end{definition}

In order to study deformations it is useful, as usual, to linearize 
the problem by considering infinitesimal deformations, where $\m^2=0$.
To this end we recall the definition of Yang-Baxter cohomology
$H_\YB(c;\m)$ that encodes infinitesimal deformations. 

\begin{definition}
  The Yang-Baxter cochain complex $C_\YB^*(c;\m)$ consists 
  of the $\A$-modules $C^n = \Hom( V^{\tensor n}, \m V^{\tensor n} )$. 
  For each $f \in C^n$ we define the partial coboundary $d^n_i f \in C^{n+1}$ by
  \begin{equation}
    d^n_i f = 
    \left(c_n \cdots c_{i+1}\right)^{-1} (f \tensor \id_V) \left(c_n \cdots c_{i+1}\right)
    - \left(c_1 \cdots c_i\right)^{-1} (\id_V \tensor f) \left(c_1 \cdots c_i\right) .
  \end{equation}
  
  This formula becomes more suggestive in graphical notation: 
  \begin{equation} \label{eq:CoboundaryGraphic}
    d^n_i f =  \quad + \quad \picc{coboundary1} \quad - \quad \picc{coboundary2} .
  \end{equation}
  The coboundary $d^n \colon C^n \to C^{n+1}$ 
  is defined as the alternating sum $d^n = \sum_{i=0}^{n} \, (-1)^i d^n_i$.
\end{definition}

\begin{proposition} \label{prop:YangBaxterCoboundary}
  We have $d^{n+1}_i \circ d^n_j = d^{n+1}_{j+1} \circ d^n_i$ 
  for $i \le j$, whence $d^{n+1} \circ d^n = 0$.
\end{proposition}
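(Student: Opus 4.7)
The plan is to prove the partial coboundary identity $d^{n+1}_i \circ d^n_j = d^{n+1}_{j+1} \circ d^n_i$ (for $i \le j$) directly, and then deduce $d^{n+1} \circ d^n = 0$ by the standard alternating-sum cancellation. To streamline the bookkeeping, I would split each coboundary into two pieces: set
\[
A^n_i f = (c_n \cdots c_{i+1})^{-1}\,(f \tensor \id_V)\,(c_n \cdots c_{i+1}),
\qquad
B^n_i f = (c_1 \cdots c_i)^{-1}\,(\id_V \tensor f)\,(c_1 \cdots c_i),
\]
so that $d^n_i f = A^n_i f - B^n_i f$. Expanding $d^{n+1}_i d^n_j$ and $d^{n+1}_{j+1} d^n_i$ each into four terms, the required identity reduces to four sub-identities for $i \le j$:
\[
A^{n+1}_i A^n_j = A^{n+1}_{j+1} A^n_i, \quad
B^{n+1}_i B^n_j = B^{n+1}_{j+1} B^n_i, \quad
A^{n+1}_i B^n_j = B^{n+1}_{j+1} A^n_i, \quad
B^{n+1}_i A^n_j = A^{n+1}_{j+1} B^n_i.
\]

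To verify these, I would work graphically using the diagrammatic conventions set up just before the statement. Each $A^n_i$ and $B^n_i$ inserts an $f$-box on $n$ consecutive strands while routing the remaining strand past it via a braid. The condition $i \le j$ means that, after the outer operation is applied, the two $f$-boxes sit at disjoint groups of strands separated by braid crossings which may be slid freely past the boxes. The pure cases $A \cdot A$ and $B \cdot B$ follow by isotopy of braids together with the commutation $c_i c_j = c_j c_i$ for $|i-j| \ge 2$, while the mixed cases require repeated use of the full braid relation $c_i c_{i+1} c_i = c_{i+1} c_i c_{i+1}$ to push one $f$-box past the string of braids coming from the other partial coboundary. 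Each of the four identities then amounts to the statement that two diagrams are related by planar isotopy once the Yang-Baxter equation is invoked for the necessary triple crossings.

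Granted the partial identity, the vanishing of $d^{n+1} d^n$ is formal. I would write
\[
d^{n+1} \circ d^n = \sum_{i,j} (-1)^{i+j}\,d^{n+1}_i d^n_j = \sum_{i \le j} (-1)^{i+j}\,d^{n+1}_i d^n_j + \sum_{i > j} (-1)^{i+j}\,d^{n+1}_i d^n_j,
\]
substitute $d^{n+1}_i d^n_j = d^{n+1}_{j+1} d^n_i$ in the first sum, and reindex via $(i',j') = (j+1,i)$, which turns the condition $i \le j$ into $i' > j'$ and shifts the sign by $(-1)^{-1}$. The first sum thereby becomes the negative of the second, and the two cancel.

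The main obstacle is the clean handling of the mixed identity $A^{n+1}_i B^n_j = B^{n+1}_{j+1} A^n_i$ and its partner, because the two conjugating braids $(c_n \cdots c_{i+1})$ and $(c_1 \cdots c_j)$ pass on opposite sides of their respective $f$-insertions and only reconcile after several applications of the Yang-Baxter equation. I expect that the cleanest write-up is the graphical one, with the algebraic verification relegated to a bookkeeping check that each elementary crossing either commutes with an $f$-box that sits far away or participates in a triple that is resolved by \eqref{eq:YangBaxterEquation}.
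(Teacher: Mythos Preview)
Your approach is correct and matches the paper's: the paper's proof is essentially the one-line remark that the identity $d^{n+1}_i d^n_j = d^{n+1}_{j+1} d^n_i$ follows from a straightforward computation ``most easily verified using the graphical calculus,'' after which the pairwise cancellation is standard. Your decomposition $d^n_i = A^n_i - B^n_i$ and the resulting four sub-identities are a clean way to organise that graphical verification (and the cross-matchings $A^{n+1}_i B^n_j = B^{n+1}_{j+1} A^n_i$, $B^{n+1}_i A^n_j = A^{n+1}_{j+1} B^n_i$ are indeed the correct pairings), but this is an elaboration of the same argument rather than a different route.
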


\begin{proof}
  The hypothesis that $c$ be a Yang-Baxter operator ensures 
  that $d^{n+1}_i d^n_j = d^{n+1}_{j+1} d^n_i$ for $i \le j$.
  This can be proven by a straightforward computation, and 
  is most easily verified using the graphical calculus suggested above.
  It follows, as usual, that all terms cancel each other in pairs 
  to yield $d^{n+1} \circ d^n = 0$.
\end{proof}

\begin{definition}
  The cochain complex $C_\YB^*(c;\m) = (C^*,d^*)$ is called 
  the \emph{Yang-Baxter cochain complex}, and its cohomology $H_\YB^*(c;\m)$ 
  is called the \emph{Yang-Baxter cohomology}
  of the operator $c$ with respect to the ideal $\m$.
\end{definition}


\begin{proposition}
  The second cohomology $H_\YB^2(c;\m)$ classifies infinitesimal 
  Yang-Baxter deformations: assuming $\m^2 = 0$, the deformation 
  $\tilde c = c \circ (\id_{\smash{V}}^{\tensor 2}+f)$ satisfies
  \begin{multline*}
    (\id_V \tensor \tilde c)^{-1}(\tilde c \tensor \id_V)^{-1}(\id_V \tensor \tilde c)^{-1} 
    (\tilde c \tensor \id_V)(\id_V \tensor \tilde c)(\tilde c \tensor \id_V) \\
    = 
    (\id_V \tensor c)^{-1}(c \tensor \id_V)^{-1}(\id_V \tensor c)^{-1} 
    (c \tensor \id_V)(\id_V \tensor c)(c \tensor \id_V) + d^2{f} .
  \end{multline*}
  This means that $\tilde c$ is a Yang-Baxter operator if and only if $d^2 f = 0$.
  Likewise, $c$ and $\tilde c$ are equivalent via conjugation 
  by $\alpha = (\id_V + g)$ if and only if $f = d^1 g$, because
  \[
  (\alpha\tensor\alpha)^{-1} \circ c \circ (\alpha\tensor\alpha) 
  = c \circ (\id_V^{\tensor 2} + d^1{g} ) .
  \]
\end{proposition}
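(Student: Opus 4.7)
The approach is to linearize both assertions modulo $\m^2$: since $\m^2 = 0$ kills every product of two elements from $\m$, all deformation terms reduce to their first-order parts, and the claim becomes the identification of these parts with $d^2 f$ and $d^1 g$ respectively. Write $\tilde c_i := c_i \circ (\id + f_i)$ on $V^{\tensor 3}$, with $f_1 := f \tensor \id_V$ and $f_2 := \id_V \tensor f$, and use the first-order inverse $\tilde c_i^{-1} \equiv (\id - f_i) \circ c_i^{-1}$ modulo $\m^2$.

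For the cocycle claim, expand $\tilde c_1 \tilde c_2 \tilde c_1$ and $\tilde c_2 \tilde c_1 \tilde c_2$ to first order in $f$. Each side contributes the common constant part $B := c_1 c_2 c_1 = c_2 c_1 c_2$ together with three linear terms in which $f_1$ or $f_2$ sits between $c_i$'s. Assembling the hexagonal product $\tilde c_2^{-1} \tilde c_1^{-1} \tilde c_2^{-1} \tilde c_1 \tilde c_2 \tilde c_1$ and repeatedly invoking $B^{-1} = c_1^{-1} c_2^{-1} c_1^{-1} = c_2^{-1} c_1^{-1} c_2^{-1}$ telescopes each of the six surviving linear contributions to one of the two summands of $d^2_0 f$, $d^2_1 f$, or $d^2_2 f$, with signs precisely matching the alternating sum $d^2 f = d^2_0 f - d^2_1 f + d^2_2 f$. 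Geometrically, the six terms correspond to the six inequivalent positions at which $f$ can be inserted in the hexagonal braid diagram, which is precisely the geometric motivation for the formula defining $d^2_i$.

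For the gauge statement, set $G := (g \tensor \id_V) + (\id_V \tensor g)$, so that $\alpha \tensor \alpha \equiv \id + G$ and $(\alpha \tensor \alpha)^{-1} \equiv \id - G$ modulo $\m^2$. A direct one-line expansion yields $(\alpha \tensor \alpha)^{-1} \circ c \circ (\alpha \tensor \alpha) \equiv c + cG - Gc = c \circ \bigl(\id + (G - c^{-1} G c)\bigr) \pmod{\m^2}$, and the bracketed expression is read off from the definitions of $d^1_0 g$ and $d^1_1 g$ as the coboundary $d^1 g$ in $C^2_\YB(c;\m)$.

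The main obstacle is purely the bookkeeping in the hexagonal expansion: one must verify that the six linear terms, after simplification via the Yang-Baxter relation for $c$ and multiplication by $B^{\pm 1}$, match the six summands of $d^2 f$ with the correct signs. The graphical calculus of \eqref{eq:CoboundaryGraphic} renders this verification immediate, since conjugation by a chain of $c_i^{\pm 1}$'s corresponds pictorially to sliding the horizontal $f$-level along the strands of the braid. No ingredient beyond the Yang-Baxter equation for $c$ itself is required.
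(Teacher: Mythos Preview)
The paper states this proposition without proof, treating it as a routine first-order computation; your outline is precisely the standard argument and is correct. One small bookkeeping remark on the gauge part: with the paper's coboundary convention $d^1 g = d^1_0 g - d^1_1 g = c^{-1}Gc - G$, so your bracketed term $G - c^{-1}Gc$ is actually $-d^1 g$; this is harmless for the classification (replace $g$ by $-g$), and may simply reflect a sign convention in the proposition itself.
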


\begin{remark}
  In the quantum case, where $c = \tau$, 
  we obtain $d f = 0$ for all $f \in C_\YB^*$.
  In particular there are no infinitesimal obstructions to deforming $\tau$: 
  \emph{every} deformation of $\tau$ satisfies the Yang-Baxter equation 
  modulo $\m^2$, and only higher-order obstructions are of interest.
  This explains why Yang-Baxter cohomology is absent in the quantum case.

  Infinitesimal obstructions are important, however,
  if $c \ne \tau$, for example for an operator $c_Q$ coming from 
  a non-trivial rack $Q$, the main object of interest to us here.
  In extreme cases they even allow us to conclude that $c_Q$ is rigid.
\end{remark}

\begin{example}
  Yang-Baxter cohomology can in particular be applied to study 
  the deformations of the Yang-Baxter operator $c_Q$ associated with a rack $Q$.
  The canonical basis $Q$ of $V = \A{Q}$ allows us to identify 
  each $\A$-linear map $f \colon \A{Q^n} \to \A{Q^n}$ with its matrix
  $f \colon Q^n \times Q^n \to \A$, related by the definition
  \[
  f\colon \left( x_1 \tensor \cdots \tensor x_n \right) \mapsto
  \sum_{y_1,\dots,y_n} f\indices{x_1,\dots,x_n}{y_1,\dots,y_n} 
  \cdot \left( y_1 \tensor \cdots \tensor y_n \right) .
  \]
  Matrix entries are thus denoted by $f\smallindices{x_1,\dots,x_n}{y_1,\dots,y_n}$
  with indices $\smallindices{x_1,\dots,x_n}{y_1,\dots,y_n} \in Q^n\times Q^n$.
  If $Q$ is infinite, then we use the tacit convention 
  that for each basis element $(x_1,\dots,x_n) \in Q^n$
  the coefficient $f\smallindices{x_1,\dots,x_n}{y_1,\dots,y_n}$
  is non-zero only for a finite number of $(y_1,\dots,y_n) \in Q^n$.

  For example, the identity $\id \colon \A{Q} \to \A{Q}$
  will be identified with the following matrix $Q \times Q \to \A$,
  which is usually called the Kronecker delta function:
  \[
  \id\indices{x}{y} = \begin{cases} 
    1 & \text{if $x = y$,} \\ 
    0 & \text{if $x \ne y$.} 
  \end{cases}
  \]
  In this notation the coboundary can be rewritten more explicitly as follows:
  \begin{align}
    \label{eq:CoboundaryFormula}
    (d_i^n f)\indices{x_0,\dots,x_n}{y_0,\dots,y_n} = 
    + & f\indices{x_0^{\phantom{x_i}},\dots,x_{i-1}^{\phantom{x_i}},x_{i+1},\dots,x_n}%
    {y_0^{\phantom{y_i}},\dots,y_{i-1}^{\phantom{y_i}},y_{i+1},\dots,y_n} 
    \cdot \id\indices{x_i^{x_{i+1}\cdots x_n}}{y_i^{y_{i+1}\cdots y_n}} 
    \\ \notag
    - & f\indices{x_0^{x_i},\dots,x_{i-1}^{x_i},x_{i+1},\dots,x_n}%
    {y_0^{y_i},\dots,y_{i-1}^{y_i},y_{i+1},\dots,y_n} \cdot \id\indices{x_i}{y_i}.
  \end{align}
\end{example}

\begin{remark}
  Instead of an ideal $\m$ in a ring $\A$ one can also define 
  the Yang-Baxter cochain complex $C_\YB^*(c;\m)$ and its cohomology
   $H_\YB^*(c;\m)$ for any module $\m$ over a ring $\K$.  
  Both points of view become equivalent in the infinitesimal setting:
  \begin{itemize}
  \item
    First, if $\m^2 = 0$ in $\A$, then $\m$ is 
    a module over the quotient ring $\K = \A/\m$.
  \item
    Conversely, every $\K$-module $\m$ defines 
    a $\K$-algebra $\A = \K \oplus \m$ with $\m^2=0$.
  \end{itemize}

  Consider for example the power series ring $\K\fps{h}$ over a ring $\K$.
  In the infinitesimal setting we have $\A = \K\fps{h}/(h^2) = \K \oplus \m$ 
  where $\m = \K h$ is isomorphic with $\K$.

  The $p$-adic integers $\Z_{p} = \varprojlim \Zmod{p^n}$
  lead to the infinitesimal algebra $\A = \Z_{p}/(p^2) = \Zmod{p^2}$.
  Here $\m = (p) \cong \Zmod{p}$ and $\K = \A/\m = \Zmod{p}$,
  but the projection $\A \onto \K$ does not split.

  This shows that an infinitesimal algebra $\A$, 
  with $\m^2 = 0$, need not be of the form $\K \oplus \m$.
  This interesting phenomenon only appears in positive characteristic.
  It does not play any r\^ole for infinitesimal deformations, 
  but may become crucial for higher order deformations.
\end{remark}

\subsection{Yang-Baxter homology} \label{sub:YangBaxterHomology}

As could be expected, there is a homology theory dual to Yang-Baxter cohomology.
Even though we shall not explicitly use it in the sequel,
it may be illuminating to briefly sketch its construction.
Since we are interested in analyzing coefficient modules,
the standard approach would be to exploit the interplay between 
homology and cohomology via the Universal Coefficient Theorem,
see \cite{MacLane:1995}, \textsection III.4 and \textsection V.11.

\begin{remark}[traces]
  Let $\A$ be a ring and let $c \colon V \tensor V \to V \tensor V$ 
  be a Yang-Baxter operator.  
  We will assume that the $\A$-module $V$ is free of finite rank,
  so that we can define a trace $\tr \colon \End(V) \to \A$,
  see Lang \cite[\textsection XVI.5]{Lang:2002}.
  Slightly more general, it suffices to assume $V$ projective 
  and finitely generated over $\A$, see Turaev \cite[chap.\,1]{Turaev:1994}.
  Even though this hypothesis may seem restrictive, 
  it is precisely the setting of quantum knot invariants,
  where a trace $\tr \colon \End(V) \to \A$ is indispensable.
  Notice further that then $\End(V^{\tensor n}) = \End(V)^{\tensor n}$,
  and for each index $i=1,\dots,n$ we have a partial trace
  $\tr_i \colon \End(V)^{\tensor n} \to \End(V)^{\tensor(n-1)}$
  defined by contracting the $i$th tensor factor.
\end{remark}


\begin{definition}
  Given a Yang-Baxter operator $c \colon V \tensor V \to V \tensor V$,
  the Yang-Baxter chain complex $C^\YB_*(c)$ 
  consists of the $\A$-modules $C_n = \End(V^{\tensor n})$.
  We define the partial boundary $\partial_n^i \colon C_{n} \to C_{n-1}$ by
  \begin{equation}
    \partial_n^i f = 
    \tr_n \left[ \left( c_{n-1} \cdots c_i \right) 
      \circ f \circ \left( c_{n-1} \cdots c_i \right)^{-1} \right]
    - \tr_1 \left[ \left( c_1 \cdots c_{i-1} \right) 
      \circ f \circ \left( c_1 \cdots c_{i-1} \right)^{-1} \right].
  \end{equation}

  Again this formula becomes more suggestive in graphical notation,
  which is, as \eqref{eq:CoboundaryGraphic}, reminiscent of rope skipping:
  \begin{equation} \label{eq:BoundaryGraphic}
    \partial_n^i f = \quad + \quad \picc{boundary1} \quad - \quad \picc{boundary2} .
  \end{equation}

  The boundary $\partial_n \colon C_{n} \to C_{n-1}$ is defined 
  as the alternating sum $\partial_n = \sum_{i=1}^{n} \, (-1)^{i-1} \partial_n^i$.
\end{definition}

\begin{proposition}
  We have $\partial_{n-1}^j \circ \partial_n^i 
  = \partial_{n-1}^i \circ \partial_n^{j+1}$ for $i \le j$,
  whence $\partial_{n-1} \circ \partial_n = 0$.
\end{proposition}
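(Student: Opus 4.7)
The plan is to mirror the proof of Proposition \ref{prop:YangBaxterCoboundary} one-to-one on the homological side. First I would establish the partial commutation identity $\partial_{n-1}^j \circ \partial_n^i = \partial_{n-1}^i \circ \partial_n^{j+1}$ for $i \le j$, and then deduce $\partial_{n-1} \circ \partial_n = 0$ by the usual simplicial cancellation argument.

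For the commutation identity I would work through the graphical calculus of \eqref{eq:BoundaryGraphic}. Each partial boundary $\partial_n^i$ is a signed difference of two "rope-skipping" diagrams: one conjugates by $c_{n-1}\cdots c_i$ and then contracts the $n$th strand, the other conjugates by $c_1\cdots c_{i-1}$ and contracts the $1$st strand. Composing $\partial_{n-1}^j \partial_n^i$ yields four such terms, and similarly for $\partial_{n-1}^i \partial_n^{j+1}$. When $i \le j$, the strand affected by the outer boundary and the strand affected by the inner boundary are distinct, so the two skipping loops can be slid past one another. The sliding requires exactly three algebraic ingredients: the braid relation, the commutation of distant braidings, and the fact that conjugation by a braid which fixes the traced-out factor does not change that factor's trace (a form of cyclicity/naturality of $\tr$). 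Term by term, the four expansions on each side match under these moves, yielding the claimed identity.

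The main obstacle is the bookkeeping when the two contractions lie on the same side, because then one trace loop is threaded through crossings that the other creates. Here I would use the partial-trace identity $\tr_k\bigl((c_\ell \tensor \id)\circ g\circ (c_\ell\tensor\id)^{-1}\bigr) = \tr_k g$ whenever $k$ indexes a factor left unaffected by $c_\ell$, together with the braid relation to normalize the braiding words $c_{n-1}\cdots c_i$ on one side so that they interlock correctly with the corresponding word after applying $\partial_{n-1}^j$. The graphical picture renders this transparent; carrying out the algebra in coordinates is mechanical once the picture is drawn.

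Granted the partial identity, the second assertion is formal. One writes
\[
\partial_{n-1} \circ \partial_n
= \sum_{i=1}^{n}\sum_{j=1}^{n-1} (-1)^{i+j}\, \partial_{n-1}^{j}\, \partial_n^{i},
\]
splits the double sum according to $j \ge i$ and $j < i$, substitutes $\partial_{n-1}^{j}\partial_n^{i} = \partial_{n-1}^{i}\partial_n^{j+1}$ in the first range, and re-indexes $(i,j) \mapsto (j+1,i)$ to see that the two half-sums coincide up to a single sign change. The standard pairwise cancellation then gives $\partial_{n-1}\circ\partial_n = 0$, completing the proof.
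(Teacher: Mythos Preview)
Your proposal is correct and follows essentially the same approach as the paper: verify the partial commutation identity via the graphical calculus (using the Yang-Baxter relation and the naturality of partial traces), then invoke the standard simplicial cancellation to conclude $\partial_{n-1}\circ\partial_n = 0$. The paper's own proof is a terse version of exactly this, deferring the verification to a ``straightforward computation'' in the graphical calculus.
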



\begin{proof}
  The hypothesis that $c$ be a Yang-Baxter operator ensures that 
  $\partial_{n-1}^j \circ \partial_n^i = \partial_{n-1}^i \circ \partial_n^{j+1}$ for $i \le j$.
  This can be proven by a straightforward computation, and
  is most easily verified using the graphical calculus suggested above.  
  It follows, as usual, that all terms cancel each other in pairs 
  to yield $\partial_{n-1} \circ \partial_n = 0$. 
\end{proof}

\begin{definition}
  The chain complex $C^\YB_*(c) = (C_*,\partial_*)$ is called
  the \emph{Yang-Baxter chain complex}, and its homology $H^\YB_*(c)$ 
  is called the \emph{Yang-Baxter homology} of the operator $c$.
\end{definition}

\begin{proposition}
  The dual complex $\Hom(C^\YB_*,\m)$ is naturally isomorphic to 
  the Yang-Baxter cochain complex $C_\YB^*(c;\m)$ defined above.
\end{proposition}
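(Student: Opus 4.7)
The plan is to construct the isomorphism explicitly at each level from the trace pairing and then verify, term by term, that it intertwines the partial coboundaries $d^n_i$ with the duals of the partial boundaries $\partial^{i+1}_{n+1}$. The underlying principle is that tracing out a tensor factor is adjoint to tensoring with the identity.

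First, for each $n$ I would build the natural isomorphism
\[
  \Phi_n \colon \Hom_\A(C^\YB_n, \m) \isoto C_\YB^n .
\]
Since $V$ is finitely generated and projective over $\A$, so is $V^{\tensor n}$, and the canonical map $V^{\tensor n} \tensor (V^{\tensor n})^* \to \End(V^{\tensor n})$ is an isomorphism. Combining the tensor-hom adjunction with the natural identification $\Hom_\A((V^{\tensor n})^*, \m) \cong V^{\tensor n} \tensor \m = \m V^{\tensor n}$ produces the desired bijection. Concretely, a cochain $f \in \Hom_\A(V^{\tensor n}, \m V^{\tensor n})$ is sent to the $\m$-valued linear form $\tilde f \colon g \mapsto \tr(f \circ g)$ on $\End(V^{\tensor n})$; naturality in $\m$ is immediate.

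Next, I would check that $\Phi$ intertwines coboundary and dual boundary piece by piece. Two elementary adjointness facts, both versions of the cyclic property of trace, drive the calculation: (a) the partial traces $\tr_n$ and $\tr_1$ are adjoint, under the full trace, to $(-\tensor \id_V)$ and $(\id_V \tensor -)$ respectively, so $\tr((f \tensor \id_V) \cdot h) = \tr(f \cdot \tr_{n+1}(h))$ and symmetrically for $\tr_1$; and (b) conjugation by an invertible $u \in \Aut(V^{\tensor(n+1)})$ on one side is adjoint to conjugation by $u^{-1}$ on the other. Taking $u = c_n \cdots c_{i+1}$ in the first summand and $u = c_1 \cdots c_i$ in the second, and comparing the defining formulas, yields the pointwise identity $\tr\bigl((d^n_i f) \circ g\bigr) = \tr\bigl(f \circ \partial^{i+1}_{n+1}(g)\bigr)$ for every $g \in \End(V^{\tensor(n+1)})$. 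Summing with signs, $d^n = \sum_{i=0}^n (-1)^i d^n_i$ matches $\sum_{j=1}^{n+1}(-1)^{j-1} \partial^j_{n+1} = \partial_{n+1}$ after the index shift $j=i+1$, so $\Phi$ is an isomorphism of cochain complexes.

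The main obstacle is administrative rather than conceptual: keeping the two index ranges, the sign conventions and the two separate adjoint pairs $(\tr_n, -\tensor\id_V)$, $(\tr_1, \id_V \tensor -)$ consistently aligned requires care. The graphical calculus of \eqref{eq:CoboundaryGraphic} and \eqref{eq:BoundaryGraphic} actually makes the duality quite transparent: the boundary pictures are obtained from the coboundary pictures by closing off the dangling strand with a trace, which is the geometric embodiment of the trace-tensor adjunction invoked above, so the bookkeeping can be organized strand by strand.
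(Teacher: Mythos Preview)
Your proposal is correct and follows essentially the same route as the paper: both set up the trace pairing $\langle f \mid g \rangle = \tr(fg)$ and verify the partial-index identity $\langle \partial_{n+1}^i\, \cdot \mid \cdot \rangle = \langle \cdot \mid d^n_{i-1}\, \cdot \rangle$, then sum with signs. The paper carries out that verification pictorially via \eqref{eq:CoboundaryGraphic} and \eqref{eq:BoundaryGraphic}, whereas you spell out the two underlying trace identities (partial trace adjoint to tensoring with $\id_V$; conjugation adjoint to inverse conjugation) algebraically --- a presentational difference only.
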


\begin{proof}
  The natural duality is induced by the duality pairing
  $\End(V^{\tensor n}) \tensor \End(V^{\tensor n}) \to \A$
  defined by $\langle f \mid g \rangle = \tr( f g )$.  
  In graphical notation this reads as 
  \[
  \langle f \mid g \rangle 
  = \picc[0.8]{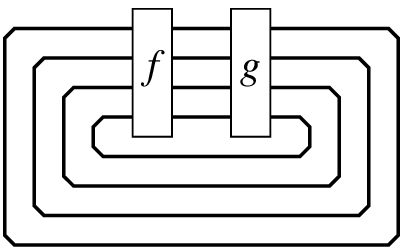} .
  \]

  The advantage of this notation is that all calculations 
  become self-evident.  In particular, we see that the coboundary 
  operator $d^*$ of Equation \eqref{eq:CoboundaryGraphic} is the dual 
  of the boundary operator $\partial_*$ of Equation \eqref{eq:BoundaryGraphic}:
  for $f \in C^\YB_{n+1}$ and $g \in C_\YB^n$ and all $i=1,\dots,n+1$ we have 
  \[
  \langle \partial_{n+1}^i f \mid g \rangle = \langle f \mid d^n_{i-1} g \rangle .
  \]
  In graphical notation this can be seen as follows:
  \begin{align*}
    \picc[0.8]{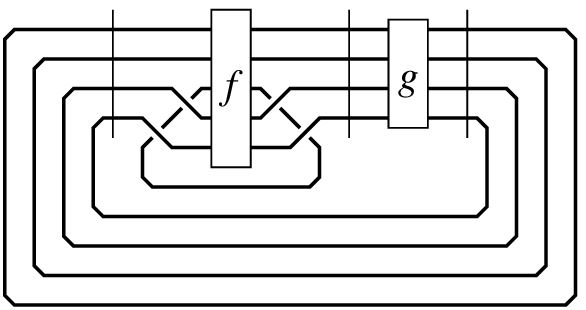} & = \picc[0.8]{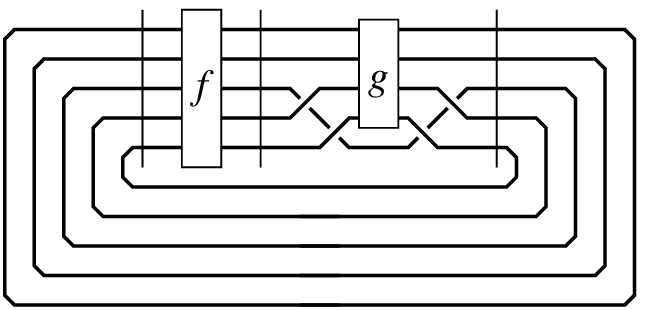} , \\
    \picc[0.8]{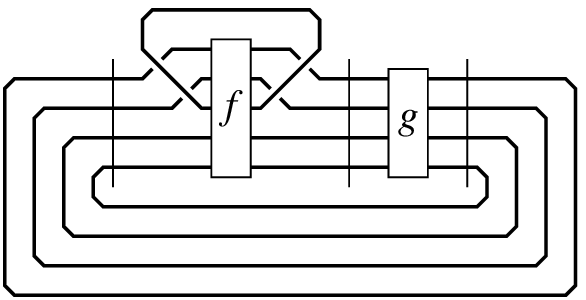} & = \picc[0.8]{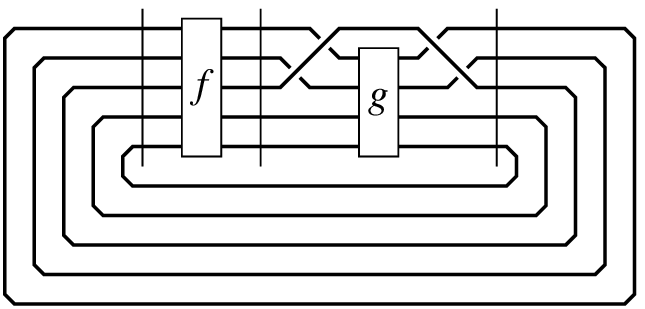} .
  \end{align*}
  We conclude that $\langle \partial_{n+1} f \mid g \rangle = \langle f \mid d^n g \rangle$
  as claimed.
\end{proof}


\begin{remark}
  In the case of a finite rack $Q$ and its associated Yang-Baxter operator $c_Q$,
  the chain complex $C^\YB_*$ can be described as follows.
  Starting from the canonical basis $Q$ of $V = \A{Q}$, 
  we obtain the basis $Q^n$ of $V^{\tensor n}$ and 
  then a basis $Q^n \times Q^n$ of $\End(V^{\tensor n})$.
  In analogy with our previous notation we denote by 
  $\smallpindices{x_1,\dots,x_n}{y_1,\dots,y_n}$ the endomorphism 
  that maps $x_1 \tensor\cdots\tensor x_n$ to $y_1 \tensor\cdots\tensor y_n$, 
  while mapping all other elements of the basis $Q^n$ to zero.
  The boundary operator can then be rewritten more explicitly as follows:
  \begin{align}
    \label{eq:BoundaryFormula}
    \partial_n \pindices{x_1,\dots,x_n}{y_1,\dots,y_n} = \sum_{i=1}^n (-1)^{i-1} 
    \biggl[ \; & \pindices{x_1^{\phantom{x_i}},\dots,x_{i-1}^{\phantom{x_i}},x_{i+1},\dots,x_n}%
    {y_1^{\phantom{y_i}},\dots,y_{i-1}^{\phantom{y_i}},y_{i+1},\dots,y_n} 
    \cdot \id\pindices{x_i^{x_{i+1}\cdots x_n}}{y_i^{y_{i+1}\cdots y_n}} 
    \\ \notag
    - & \pindices{x_1^{x_i},\dots,x_{i-1}^{x_i},x_{i+1},\dots,x_n}%
    {y_1^{y_i},\dots,y_{i-1}^{y_i},y_{i+1},\dots,y_n} 
    \cdot \id\pindices{x_i}{y_i} \; \biggr].
  \end{align}
  
  We see that the boundary formula \eqref{eq:BoundaryFormula}
  is dual to the coboundary formula \eqref{eq:CoboundaryFormula},
  which nicely illustrates the preceding proposition.
  Again diagonal chains form a subcomplex, which corresponds to 
  rack homology defined in \cite{FennRourke:1992,CarterEtAl:2001}.  
  The diagonal subcomplex $C^\Diag_* \subset C^\YB_*$ is a retract, 
  so that rack homology is a direct summand of Yang-Baxter homology.
  Moreover, under the above duality, $C^\Diag_* \subset C^\YB_*$ 
  is dual to $C_\Diag^* \subset C_\YB^*$.
\end{remark}

\begin{remark}
  The duality exhibited above 
  is graphically appealing and theoretically satisfying:
  it is reassuring to have the standard homology-cohomology pairing.
  Notice, however, that we have to restrict to free modules 
  of finite rank, or finitely generated projective modules.
  Yang-Baxter cohomology alone can be defined over arbitrary Yang-Baxter
  modules $(V,c)$, not necessarily projective or finitely generated.
  From this abstract viewpoint Yang-Baxter cohomology 
  thus seems more natural than homology.  
\end{remark}

\subsection{Non-Functoriality} \label{sub:NonFunctoriality}

Yang-Baxter cohomology and homology suffer from a curious defect:
they are not functorial with respect to homomorphisms of Yang-Baxter operators:

\begin{definition}
  A \emph{homomorphism} 
  between Yang-Baxter operators $c \colon V \tensor V \to V \tensor V$ and
  $\bar{c} \colon \bar{V} \tensor \bar{V} \to \bar{V} \tensor \bar{V}$
  is an $\A$-linear map $\phi \colon V \to \bar{V}$ such that 
  $\bar{c} \circ (\phi\tensor\phi) = (\phi\tensor\phi) \circ c$.
  \[
  \begin{CD}
    V \tensor V  @>{c}>> V \tensor V \\
    @V{\phi\tensor\phi}VV @VV{\phi\tensor\phi}V \\
    \bar{V} \tensor \bar{V} @>{\bar{c}}>> \bar{V} \tensor \bar{V}
  \end{CD}
  \]
  This condition ensures that $\phi$ induces for each $n$ a homomorphism 
  $\phi^{\tensor n} \colon V^{\tensor n} \to \bar{V}^{\tensor n}$ 
  that is equivariant with respect to the natural action 
  of Artin's braid group $B_n$.
\end{definition}

\begin{example}
  A map $\phi \colon Q \to \bar{Q}$ is a homomorphism
  between two racks $Q$ and $\bar{Q}$ if and only if only if its $\A$-linear
  extension $\phi \colon \A{Q} \to \A\bar{Q}$ is a homomorphism
  between the associated Yang-Baxter operators $c_{Q}$ and $c_{\bar{Q}}$.
\end{example}

\begin{remark}
  Given a homomorphism $\phi$ between Yang-Baxter operators $c$ and $\bar{c}$, 
  we would expect a natural cochain homomorphism 
  $\phi^* \colon C_\YB^*(\bar{c};\m) \to C_\YB^*(c;\m)$ as well as
  a natural chain homomorphism $\phi_* \colon C^\YB_*(c) \to C^\YB_*(\bar{c})$.
  The definitions of $C^\YB_n(c) = \End(V^{\tensor n})$ and 
  $C_\YB^n(c;\m) = \Hom( V^{\tensor n}, \m V^{\tensor n} )$, 
  however, do not lend themselves to any obvious construction.
  This difficulty persists even if $V$ is free of finite rank.  
  In degree $2$ the problem is that in a general deformation of 
  $c \colon V \tensor V \to V \tensor V$ both factors interact non-trivially.
  This does not respect the product structure of $\phi^{\tensor n}$.

  To be more explicit, consider a homomorphism $\phi \colon Q \to \bar{Q}$ between 
  finite racks. 
  We can define a map $\phi^* \colon C_\YB^*(c_{\bar{Q}};\m) \to C_\YB^*(c_Q;\m)$
  by setting
  \begin{equation}
    \label{eq:YBInducedMap}
    (\phi^* f)\indices{x_1,\dots,x_n}{y_1,\dots,y_n} 
    = f\indices{\phi(x_1),\dots,\phi(x_n)}{\phi(y_1),\dots,\phi(y_n)} .
  \end{equation}
  Even though this is the natural candidate,
  it does in general not define a cochain map, that is, 
  we usually have $\phi^* \circ d_{\smash{\bar{Q}}}^* \ne d_{\smash{Q}}^* \circ \phi^*$.
\end{remark}

\begin{example}
  Consider a rack $Q$.  
  The inner automorphism group $\Inn(Q)$ acts naturally on $Q$.
  The set of orbits $\bar{Q} = Q/\Inn(Q)$ can be regarded 
  as a trivial rack, in which case the quotient map 
  $\phi \colon Q \to \bar{Q}$ becomes a rack homomorphism.  


  Consider a cochain $f \in C_\YB^n(c_{\bar{Q}};\m)$.
  The coboundary $d_{\smash{\bar{Q}}}^*$ vanishes, 
  so that $\phi^* d_{\smash{\bar{Q}}}^* f = 0$.
  In general, however, we have $d_Q^* \phi^* f \ne 0$.
  To see this consider $y,z \in Q$ satisfying $y \not\equiv z$,
  which means that there exists $x \in Q$ such that $x^y \ne x^z$.
  We find 
  \begin{align*}
    \Bigr( d_Q^1 (\phi^* f) \Bigr)\indices{x,y}{x,z}
    & = (\phi^* f)\indices{y}{z} \cdot \id\indices{x^y}{x^z} 
    - (\phi^* f)\indices{y}{z} \cdot \id\indices{x}{x} \\
    & - (\phi^* f)\indices{x}{x} \cdot \id\indices{y}{z} 
    + (\phi^* f)\indices{x^y}{x^z} \cdot \id\indices{y}{z}
    = - f\indices{\phi(y)}{\phi(z)} .
  \end{align*}
  This is in general not zero, whence $d_Q^* \phi^* f \ne \phi^* d_{\smash{\bar{Q}}}^* f$.
  (For an explicit example take $\bar{Q}$ to be trivial, so that
  $d_{\smash{\bar{Q}}}^*=0$ and $f$ can be chosen arbitrarily.)
  We conclude that the natural candidate 
  $\phi^* \colon C_\YB^*(c_{\bar{Q}};\m) \to C_\YB^*(c_Q;\m)$
  is not a cochain map.
\end{example}


\section{Diagonal deformations} \label{sec:Diagonal}



In \sref{sec:Definitions} 
we have considered arbitrary deformations of $c_Q$.  
The problem becomes much easier if we concentrate on deformations of the form 
$\tilde{c}(a \tensor b) = \lambda(a,b) \cdot c_Q(a \tensor b)$,
where $\lambda \colon Q \times Q \to \Lambda$ 
is a map into some abelian group $\Lambda$.  
Such deformations are classified by rack cohomology:

\begin{definition}
  Let $Q$ be a rack and let $\Lambda$ be an abelian group (written additively).
  We consider the cochain complex $C_\Rack^n = C_\Rack^n(Q;\Lambda)$ formed 
  by all maps $\lambda \colon Q^n \to \Lambda$.
  The coboundary $\delta^n \colon C_\Rack^n \to C_\Rack^{n+1}$ is defined by 
  \begin{align} \label{eq:RackCohomology}
    (\delta^n \lambda)(a_0,\dots,a_n) = \sum_{i=1}^n (-1)^i 
    \Bigl[ \; &\lambda(a_0^{\phantom{a_i}}, \dots, a_{i-1}^{\phantom{a_i}}, a_{i+1}, \dots, a_n ) 
    \\[-2mm] \notag
    - & \lambda(a_0^{a_i}, \dots, a_{i-1}^{a_i}, a_{i+1}, \dots, a_n ) \; \Bigr].
  \end{align}
  This defines a cochain complex $(C_\Rack^\ast,\delta^\ast)$,
  whose cohomology $H_\Rack^n(Q;\Lambda)$ is called the
  \emph{rack cohomology} of $Q$ with coefficients in $\Lambda$.
\end{definition}

\begin{remark}
  It is easily seen that $\tilde{c}$ is a Yang-Baxter operator
  if and only if $\lambda$ is a rack cocycle, see Gra\~na \cite{Grana:2002}.
  Likewise, $\tilde{c}$ and $c_Q$ are equivalent if and only if
  $\lambda$ is a coboundary.
\end{remark}

\begin{remark}
  As in group theory, the second cohomology group $H_\Rack^2(Q;\Lambda)$
  is in bijective correspondence with equivalence classes of 
  central extension $\Lambda \curvearrowright \tilde{Q} \to Q$,
  see \cite{Eisermann:2003,Eisermann:2005}.
\end{remark}

Let us make explicit how rack cohomology fits into 
the more general framework of Yang-Baxter cohomology.  
Suppose that $\Lambda$ is a module over some ring $\K$.
We can form the $\K$-algebra $\A = \K \oplus \Lambda$ 
by setting $u v = 0$ for all $u,v \in \Lambda$,
that is, we equip $\A$ with the product $(a,u) \cdot (b,v) = (ab, av+bu)$.
For $\K = \Lambda = \Zmod{n}$, for example, we obtain $\A = \K[h]/(h^2)$.

We have an augmentation homomorphism $\varepsilon \colon \A \to \K$ defined 
by $\varepsilon(1)=1$ and $\varepsilon(u)=0$ for all $u \in \Lambda$.
The augmentation ideal $\m = \ker(\varepsilon)$ thus coincides with $\Lambda$.
Notice also that the additive group $\Lambda$ is isomorphic 
to the multiplicative subgroup $1 + \m$ of the ring $\A$.

If we consider diagonal deformations 
\[
\tilde{c}(a\tensor b) = \bigl(1+\lambda(a,b)\bigr) \cdot c_Q(a \tensor b)
\quad\text{with $\lambda(a,b) \in \m$,}
\]
then we see that rack cohomology 
naturally embeds into Yang-Baxter cohomology: 

\begin{proposition} \label{prop:RackYBCohomology}
  The rack cochain complex $C_\Rack^*(Q;\Lambda)$ is naturally isomorphic
  to the diagonal subcomplex $C_\Diag^*$ of the Yang-Baxter cohomology $C_\YB^*(c_Q;\m)$.
\end{proposition}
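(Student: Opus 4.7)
The plan is to exhibit an explicit bijection on cochains and then check it intertwines the two coboundary operators. Since $\m$ is naturally identified with $\Lambda$ as an abelian group, and a diagonal cochain $f \in C_\Diag^n \subset C_\YB^n(c_Q;\m)$ is determined by its diagonal entries, I would define the natural map $\Phi \colon C_\Rack^n(Q;\Lambda) \to C_\Diag^n$ by sending $\lambda \colon Q^n \to \Lambda$ to the diagonal matrix with entries $\Phi(\lambda)\smallindices{x_1,\dots,x_n}{x_1,\dots,x_n} = \lambda(x_1,\dots,x_n)$. This $\Phi$ is obviously a bijection of $\A$-modules in each degree; what remains is to verify (i) that $C_\Diag^*$ is in fact a subcomplex, and (ii) that $\Phi$ carries $\delta^*$ to the restriction of $d^*_\YB$.

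For (i), I would apply the coboundary formula \eqref{eq:CoboundaryFormula} to a diagonal $f$ and show that each $(d_i^n f)\smallindices{x_0,\dots,x_n}{y_0,\dots,y_n}$ vanishes unless $x_j = y_j$ for all $j$. The first summand requires $x_j = y_j$ for $j \ne i$ and $x_i^{x_{i+1}\cdots x_n} = y_i^{y_{i+1}\cdots y_n}$; given the former, the latter becomes $x_i^{x_{i+1}\cdots x_n} = y_i^{x_{i+1}\cdots x_n}$, which forces $x_i = y_i$ since each right translation is a bijection (the rack axiom Q2). The second summand is handled analogously, with $x_i = y_i$ given directly and $x_j^{x_i} = y_j^{x_i}$ forcing $x_j = y_j$ for $j < i$.

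For (ii), on the diagonal $x_i = y_i$ the formula \eqref{eq:CoboundaryFormula} collapses to
\[
(d_i^n f)\smallindices{x_0,\dots,x_n}{x_0,\dots,x_n}
= \lambda(x_0,\dots,\widehat{x_i},\dots,x_n) - \lambda(x_0^{x_i},\dots,x_{i-1}^{x_i},x_{i+1},\dots,x_n),
\]
where $\lambda = \Phi^{-1}(f)$ and the hat indicates omission. The key observation is that for $i=0$ both terms reduce to $\lambda(x_1,\dots,x_n)$ and cancel, so $d_0^n$ contributes nothing on the diagonal. Consequently $\Phi^{-1} \circ d^n \circ \Phi = \sum_{i=1}^n (-1)^i \left[ \lambda(\dots,\widehat{a_i},\dots) - \lambda(a_0^{a_i},\dots,a_{i-1}^{a_i},a_{i+1},\dots,a_n)\right]$, which is exactly $\delta^n \lambda$ from \eqref{eq:RackCohomology}.

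The only mildly delicate point is the vanishing of the $i=0$ term, which accounts for the discrepancy in summation ranges between the two coboundary formulas; everything else is a transcription of indices. Naturality in $(Q,\Lambda)$ is clear from the explicit formula, so no further verification is needed.
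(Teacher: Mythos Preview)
Your proof is correct and follows the same approach as the paper's: identify diagonal cochains with maps $Q^n \to \m \cong \Lambda$ and check that the Yang-Baxter coboundary \eqref{eq:CoboundaryFormula} restricts to the rack coboundary \eqref{eq:RackCohomology}. The paper's proof is terser and simply asserts these facts, whereas you supply the details---in particular the vanishing of the $i=0$ term, which the paper leaves implicit.
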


\begin{proof}
  As usual, a matrix $f \colon Q^n \times Q^n \to \m$
  is called \emph{diagonal} if $f\smallindices{x_1,\dots,x_n}{y_1,\dots,y_n}$
  vanishes whenever $x_i \ne y_i$ for some index $i=1,\dots,n$.
  Diagonal cochains form a subcomplex of $(C_\YB^*,d^*)$.

  Each diagonal matrix $f \colon Q^n \times Q^n \to \m$ 
  can be identified with the corresponding map $\lambda \colon Q^n \to \m$
  such that $f\smallindices{x_1,\dots,x_n}{x_1,\dots,x_n} = \lambda(x_1,\dots,x_n)$.
  Under this identification, the Yang-Baxter coboundary \eqref{eq:CoboundaryFormula}
  reduces to the rack coboundary \eqref{eq:RackCohomology}.
\end{proof}


\begin{remark}
  Unlike the full Yang-Baxter complex $(C_\YB^*(c_Q,\m),d^*)$, 
  the diagonal complex $(C_\Diag^*(c_Q,\m),d^*)$ and 
  rack cohomology \eqref{eq:RackCohomology} \emph{are} functorial in $Q$.
\end{remark}


\begin{proposition}
  There exists a retraction $r \colon C_\YB^* \to C_\Diag^*$ 
  of cochain complexes, whence rack cohomology $H_\Rack^*(Q;\Lambda)$ is
  a direct summand of Yang-Baxter cohomology $H_\YB^*(c_Q;\m)$.
\end{proposition}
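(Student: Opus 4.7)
The plan is to write down the obvious projection onto the diagonal part and to verify that it commutes with the coboundary. Concretely, for $f \in C_\YB^n$ represented by its matrix $f \colon Q^n \times Q^n \to \m$, I would define
\[
(rf)\indices{x_1,\dots,x_n}{y_1,\dots,y_n}
:= f\indices{x_1,\dots,x_n}{x_1,\dots,x_n} \cdot \id\indices{x_1}{y_1}\cdots\id\indices{x_n}{y_n},
\]
so that $rf$ is manifestly diagonal and agrees with $f$ on the diagonal. Under the identification $C_\Diag^n \cong C_\Rack^n(Q;\Lambda)$ of Proposition~\ref{prop:RackYBCohomology}, this is simply the restriction to the diagonal. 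With this definition, $r \circ \iota = \id_{C_\Diag^*}$ for the inclusion $\iota \colon C_\Diag^* \hookrightarrow C_\YB^*$ holds tautologically, so the only thing still requiring proof is that $r$ is a cochain map.

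I would verify $r \circ d^n = d^n \circ r$ by a two-case check using the explicit coboundary formula~\eqref{eq:CoboundaryFormula}. On a diagonal index $\smallindices{x_0,\dots,x_n}{x_0,\dots,x_n}$ every Kronecker factor $\id\smallindices{x_i}{y_i}$ and $\id\smallindices{x_i^{x_{i+1}\cdots x_n}}{y_i^{y_{i+1}\cdots y_n}}$ appearing in~\eqref{eq:CoboundaryFormula} equals $1$, so $(d^n f)\smallindices{\vec x}{\vec x}$ depends only on the diagonal entries of $f$; hence $(rd^nf)\smallindices{\vec x}{\vec x} = (d^n rf)\smallindices{\vec x}{\vec x}$. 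On a non-diagonal index both sides vanish: $rd^nf$ by the definition of $r$, and $d^n rf$ because Proposition~\ref{prop:RackYBCohomology} has already established that diagonal cochains form a subcomplex, i.e., $d^n$ preserves diagonality.

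Once $r$ is known to be a cochain retraction, we immediately get a splitting $C_\YB^* = C_\Diag^* \oplus \ker r$ of cochain complexes, which passes to cohomology to yield $H_\YB^*(c_Q;\m) \cong H_\Diag^*(c_Q;\m) \oplus H^*(\ker r)$, and the identification $H_\Diag^* \cong H_\Rack^*(Q;\Lambda)$ then exhibits rack cohomology as the claimed direct summand. Honestly there is no real obstacle here: the substantive content was the verification that $C_\Diag^*$ is a subcomplex, and the present statement is the dual observation that the tautological projection back onto that subcomplex is also a chain map. It is worth flagging, however, what this argument does \emph{not} give: as the remark preceding the proposition warns, this $r$ is not expected to be a homotopy retraction, so one should not hope to upgrade $r$ to a deformation retraction by the same soft method — that is precisely why the stronger quasi-diagonal statement of Theorem~\ref{thm:QuasiDiagonalCohomology} requires the genuinely new homotopy constructed in~\sref{sec:HomotopyRetraction}.
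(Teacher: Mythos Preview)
Your proof is correct and follows the same approach as the paper: define $r$ as the obvious diagonal projection and verify directly from the explicit coboundary formula~\eqref{eq:CoboundaryFormula} that it is a cochain map. The paper actually checks the slightly finer statement $d_i^n \circ r^n = r^{n+1} \circ d_i^n$ for each partial coboundary, whereas you check the alternating sum $d^n$ in one go by invoking the subcomplex property of $C_\Diag^*$ for the off-diagonal case; both verifications are equally short and amount to the same computation.
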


\begin{proof}
  The obvious idea turns out to work.  
  We define $r^n \colon C_\YB^n \to C_\Diag^n$ by
  \[
  (r^n f)\smallindices{x_1,\dots,x_n}{y_1,\dots,y_n}
  := \begin{cases} 
    f\smallindices{x_1,\dots,x_n}{y_1,\dots,y_n}
    & \text{ if $x_i = y_i$ for all $i=1,\dots,n$,} 
    \\
    0 & \text{ otherwise.}
  \end{cases}
  \]
  The coboundary formula \eqref{eq:CoboundaryFormula}
  shows that $d_i^n \circ r^n = r^{n+1} \circ d_i^n$,
  whence $d \circ r = r \circ d$.
  By construction we have $r(C_\YB^*) = C_\Diag^*$ 
  and $r|C_\Diag^* = \id$, so that $r$ is a retraction, as desired.
\end{proof}

\begin{remark}
  The example of a trivial rack $Q$ shows that Yang-Baxter $H_\YB^*(c_Q;\m)$
  is in general much bigger than rack cohomology $H_\Rack^*(Q;\Lambda)$,
  so we cannot capture all information by diagonal deformations alone.
  In order to do so, we have to consider the more general notion 
  of \emph{quasi-diagonal} deformations, as explained below.
\end{remark}




\section{Quasi-diagonal deformations} \label{sec:QuasiDiagonal}


A matrix $f \colon Q^n \times Q^n \to \A$ is called 
\emph{quasi-diagonal} if $f\smallindices{x_1,\dots,x_n}{y_1,\dots,y_n}$
vanishes whenever $x_i \not\equiv y_i$ for some index $i=1,\dots,n$.

\begin{proposition}
  The quasi-diagonal cochains of the Yang-Baxter complex $(C_\YB^*,d^*)$
  form a subcomplex, denoted $(C_\Delta^*,d^*)$.
\end{proposition}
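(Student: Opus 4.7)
The plan is to verify that each partial coboundary $d^n_i$ preserves quasi-diagonality; since $d^n = \sum_{i=0}^{n} (-1)^i d^n_i$, this immediately implies the statement. Using the explicit formula \eqref{eq:CoboundaryFormula}, I would fix $f \in C_\Delta^n$ and a pair of multi-indices $\smallindices{x_0,\dots,x_n}{y_0,\dots,y_n}$ for which $x_j \not\equiv y_j$ for some $j \in \{0,\dots,n\}$, and show that each of the two summands on the right-hand side of $(d^n_i f)\smallindices{x_0,\dots,x_n}{y_0,\dots,y_n}$ vanishes. The natural case distinction is $j=i$ versus $j\ne i$.

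If $j=i$, then in particular $x_i \ne y_i$, so the second summand vanishes via its factor $\id\smallindices{x_i}{y_i}$. For the first summand, if the $f$-factor is nonzero, then quasi-diagonality of $f$ forces $x_k \equiv y_k$ for every $k \ne i$; the key identity $\rho(a^b) = \rho(b)^{-1}\rho(a)\rho(b)$ then yields $\rho(x_{i+1}\cdots x_n) = \rho(y_{i+1}\cdots y_n)$ in $\Inn(Q)$, so the two arguments of the $\id$-factor are $x_i^g$ and $y_i^g$ for the same $g \in \Inn(Q)$. Since $g$ is an automorphism and $x_i \ne y_i$, we obtain $x_i^g \ne y_i^g$, and the $\id$-factor vanishes.

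If $j\ne i$, the first summand's $f$-factor vanishes at once, because its position-$j$ indices are still $x_j, y_j$, which are non-equivalent. The same observation handles the second summand when $j>i$. The only remaining subcase is $j<i$, where the position-$j$ arguments of the $f$-factor become $x_j^{x_i}$ and $y_j^{y_i}$. One may assume $x_i \equiv y_i$, as otherwise the $j=i$ analysis already forces everything to vanish. Under this assumption the identity above gives $\rho(x_j^{x_i}) = \rho(x_i)^{-1}\rho(x_j)\rho(x_i)$ and $\rho(y_j^{y_i}) = \rho(x_i)^{-1}\rho(y_j)\rho(x_i)$, which are unequal since $\rho(x_j) \ne \rho(y_j)$; thus $x_j^{x_i} \not\equiv y_j^{y_i}$ and the $f$-factor vanishes.

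The argument is essentially bookkeeping; the only conceptual ingredient is that behavioural equivalence is compatible with the rack action, precisely because $\rho \colon Q \to \Inn(Q)$ intertwines $\ast$ with conjugation. I do not expect a genuine obstacle: the main subtlety is simply to split the indices $j$ witnessing non-equivalence into the three positions $j=i$, $j>i$, $j<i$ relative to the coboundary index, and to track how the substitutions in \eqref{eq:CoboundaryFormula} interact with $\equiv$.
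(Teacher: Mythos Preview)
Your proof is correct and follows exactly the approach the paper implicitly relies on: the paper states this proposition without proof, and the analogous filtration lemma in \sref{sub:Filtration} is dispatched in one line by pointing to formula \eqref{eq:CoboundaryFormula} and asserting that each partial coboundary $d^n_i$ preserves the condition. Your detailed case analysis $j=i$, $j>i$, $j<i$ is precisely the bookkeeping that the paper leaves to the reader, and your use of the identity $\rho(a^b) = \rho(b)^{-1}\rho(a)\rho(b)$ to transport behavioural equivalence through the rack action is exactly the right ingredient.
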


\begin{remark}
  Restricted to the subcomplex $C_\Delta^*$ of quasi-diagonal cochains, 
  the coboundary $d^n \colon C_\Delta^n \to C_\Delta^{n+1}$ 
  takes the form $d^n f = \sum_{i=1}^{n} \, (-1)^i d^n_i f$ with
  \[
  (d^n_i f)\indices{x_0,\dots,x_n}{y_0,\dots,y_n} = 
  \biggl( f\indices{x_0^{\phantom{x_i}},\dots,x_{i-1}^{\phantom{x_i}},x_{i+1},\dots,x_n}%
  {y_0^{\phantom{y_i}},\dots,y_{i-1}^{\phantom{y_i}},y_{i+1},\dots,y_n} 
  - f\indices{x_0^{x_i},\dots,x_{i-1}^{x_i},x_{i+1},\dots,x_n}%
  {y_0^{y_i},\dots,y_{i-1}^{y_i},y_{i+1},\dots,y_n} \biggr) \cdot \id\indices{x_i}{y_i}.
  \]
  
  This illustrates, in explicit terms, that the quasi-diagonal subcomplex
  is half-way between Yang-Baxter cohomology \eqref{eq:CoboundaryFormula} 
  and rack cohomology \eqref{eq:RackCohomology}.

  We should point out that the quasi-diagonal subcomplex $C_\Delta^*$ coincides 
  with the Yang-Baxter cochain complex $C_\YB^*$ if the rack $Q$ is trivial.
  On the other hand, $C_\Delta^*$ coincides with the rack cochain complex $C_\Rack^*$ 
  if the inner representation $\rho \colon Q \to \Inn(Q)$ is injective:
  in this case $x \equiv y$ means $x = y$, and quasi-diagonal means diagonal.
\end{remark}


\begin{remark}
  Like the full Yang-Baxter complex $C_\YB^*(c_Q,\m)$, 
  the quasi-diagonal complex $C_\Delta^*(c_Q,\m)$ 
  is \emph{not} functorial in the rack $Q$.
  Every rack homomorphism $\phi \colon Q \to \bar{Q}$ induces a map
  $\phi_\Delta^* \colon C_\Delta(c_{\bar{Q}},\m) \to C_\Delta(c_Q,\m)$
  defined by 
  \begin{equation}
    \label{eq:QuasiDiagInducedMap}
    (\phi_\Delta^* f)\indices{x_1,\dots,x_n}{y_1,\dots,y_n} 
    = f\indices{\phi(x_1),\dots,\phi(x_n)}{\phi(y_1),\dots,\phi(y_n)} 
  \end{equation}
  for all 
  $x_1 \equiv y_1, \dots, x_n \equiv y_n$ in $Q$.
  This natural map, however, is in general not a cochain map.
  A concrete example can be constructed as follows.
\end{remark}

\begin{example}
  Consider a non-trivial rack $\bar{Q}$ 
  and choose $\bar{x},\bar{y} \in \bar{Q}$
  such that $\bar{x}^{\bar{y}} \ne \bar{x}$.
  Assume that $\phi \colon Q \to \bar{Q}$ is a rack homomorphism,
  $\phi(x) = \bar{x}$ and $\phi(y) = \phi(z) = \bar{y}$ with $y \ne z$.
  The easiest example is the trivial extension $Q = \bar{Q} \times \{1,2\}$,
  which also ensures that $y = (\bar{y},1)$ and 
  $z = (\bar{y},2)$ are behaviourally equivalent.
  For each cochain $f \in C^1(c_{\bar{Q}},\m)$ we find
  \begin{align}
    \label{eq:Naturality1}
    (d^1 \phi^* f)\indices{x,y}{x,z}
    & = \left( (\phi^* f)\indices{x^y}{x^z} -(\phi^* f)\indices{x}{x} \right) 
    \cdot \id\indices{y}{z} = 0
    \qquad\text{as opposed to}
    \\
    \label{eq:Naturality2}
    (\phi^* d^1 f)\indices{x,y}{x,z} 
    & = (d^1 f)\indices{\bar{x},\bar{y}}{\bar{x},\bar{y}} 
    = \left( f \indices{\bar{x}^{\bar{y}}}{\bar{x}^{\bar{y}}}
    - f \indices{\bar{x}}{\bar{x}} \right) \id\indices{\bar{y}}{\bar{z}}
    = f \indices{\bar{x}^{\bar{y}}}{\bar{x}^{\bar{y}}}
    - f \indices{\bar{x}}{\bar{x}} .
  \end{align}
  Since $\bar{x}^{\bar{y}} \ne \bar{x}$,
  the cochain $f$ can be so chosen that 
  the last difference is non-zero.
\end{example}

The difference between equations \eqref{eq:Naturality1} 
and \eqref{eq:Naturality2} disappears for equivariant cochains:

\begin{definition} \label{def:FullyEquivariant}
  A cochain $f \in C^n(c_Q,\m)$ is \emph{fully equivariant} if it satisfies 
  \[
  f\indices{x_1,\dots,x_n}{y_1,\dots,y_n} =
  f\indices{x_1^{g_1},\dots,x_n^{g_n}}{y_1^{g_1},\dots,y_n^{g_n}} 
  \]
  for all $x_1,\dots,x_n,y_1,\dots,y_n \in Q$ and $g_1,\dots,g_n \in \Inn(Q)$.
\end{definition}

\begin{definition} \label{def:Entropic}
  A cochain $f \in C^n(c_Q,\m)$ is called \emph{entropic} 
  if it is quasi-diagonal and fully equivariant.  
  Such cochains are characterized by the condition
  $d^n_0 f = \dots = d^n_n f = 0$, in other words, 
  all partial coboundaries vanish \cite[Lemma 30]{Eisermann:2005}.
  In particular, entropic cochains are cocycles; the submodule
  of entropic cocycles is denoted by $\Ent^*(c_Q,\m) \subset Z_\YB^*(c_Q,\m)$.
\end{definition}

\begin{remark}
  For every rack $Q$ we have $C_\YB^*(c_Q,\m) \supset 
  C_\Delta^*(c_Q,\m) \supset \Ent^*(c_Q,\m)$ by definition.
  Every rack homomorphism $\phi \colon Q \to \bar{Q}$ induces maps 
  \[
  \begin{CD}
    C_\YB^*(c_Q,\m) @<{\supset}<{}< C_\Delta^*(c_Q,\m) @<{\supset}<{}< \Ent^*(c_Q,\m) \\
    @A{\phi^*}AA @AA{\phi_\Delta^*}A @AA{\phi_\Ent^*}A \\
    C_\YB^*(c_{\bar{Q}},\m) @<{\supset}<{}< C_\Delta^*(c_{\bar{Q}},\m) @<{\supset}<{}< \Ent^*(c_{\bar{Q}},\m) .
  \end{CD}
  \]
  Here $\phi^*$ is defined by \eqref{eq:YBInducedMap}, 
  whereas $\phi_\Delta^*$ is defined by \eqref{eq:QuasiDiagInducedMap}, 
  and the map $\phi_\Ent^*$ is obtained from $\phi_\Delta^*$ by restriction.
  In general $\phi^*$ and $\phi_\Delta^*$ are \emph{not} cochain maps,
  as pointed out above.  Only the third map $\phi_\Ent^*$ is always 
  a cochain map because $C_\Ent^* \subset Z_\YB^*$ is a trivial subcomplex.
  %
\end{remark}


The main goal of this article is to show that 
$C_\Delta^* \subset C_\YB^*$ is a homotopy retract.
We point out that a much weaker statement
follows easily from the definition:

\begin{proposition} \label{prop:QuasiRectraction}
  There exists a retraction $r \colon C_\YB^* \to C_\Delta^*$, 
  whence quasi-diagonal Yang-Baxter cohomology $H_\Delta^*(c_Q;\m)$ is
  a direct summand of Yang-Baxter cohomology $H_\YB^*(c_Q;\m)$.
\end{proposition}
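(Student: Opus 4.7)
The plan is to mimic the construction used for the diagonal retraction $C_\YB^* \onto C_\Diag^*$, with behavioural equivalence $\equiv$ playing the role formerly played by equality. Concretely, I would define $r^n \colon C_\YB^n \to C_\Delta^n$ by truncation:
\[
(r^n f)\indices{x_1,\dots,x_n}{y_1,\dots,y_n} :=
\begin{cases}
  f\indices{x_1,\dots,x_n}{y_1,\dots,y_n} & \text{if } x_i \equiv y_i \text{ for all } i=1,\dots,n,\\
  0 & \text{otherwise.}
\end{cases}
\]
By construction $r^n$ lands in $C_\Delta^n$ and is the identity on $C_\Delta^n$, so it is a set-theoretic retraction of the inclusion $\iota \colon C_\Delta^* \into C_\YB^*$. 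The only non-trivial point is that $r$ is a cochain map, i.e.\ $d^n_i \circ r^n = r^{n+1} \circ d^n_i$ for all $i$.

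To verify compatibility with the coboundary, I would work from the explicit formula \eqref{eq:CoboundaryFormula} and compare the two sides entry by entry at a fixed index $\indices{x_0,\dots,x_n}{y_0,\dots,y_n}$. The key algebraic input is the identity $\rho(a^b) = \rho(a)^{\rho(b)}$ in $\Inn(Q)$, which shows that the relation $\equiv$ is preserved by the rack action: if $x_k \equiv y_k$ and $x_i \equiv y_i$, then $x_k^{x_i} \equiv y_k^{y_i}$. This lets me dispose of the only-if direction: when all $x_k \equiv y_k$, the inputs to $f$ appearing in $d^n_i r^n f$ again consist of behaviourally equivalent pairs, so $r^n$ acts as the identity there and $(d^n_i r^n f) = (d^n_i f) = (r^{n+1} d^n_i f)$. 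For the converse, suppose some $x_k \not\equiv y_k$; I need both summands of $(d^n_i r^n f)$ to vanish. If the offending index is $k = i$, then the factor $\id\indices{x_i}{y_i}$ kills the second term, while the first term requires $x_j \equiv y_j$ for all $j \ne i$, which forces $\rho(x_{i+1}\cdots x_n) = \rho(y_{i+1}\cdots y_n)$; since right translations are bijections, the identity factor $\id\indices{x_i^{x_{i+1}\cdots x_n}}{y_i^{y_{i+1}\cdots y_n}}$ then forces $x_i = y_i$, contradicting $x_i \not\equiv y_i$. If instead the offending index lies in $\{0,\dots,n\}\setminus\{i\}$, the first term vanishes directly, and for the second term the presence of $\id\indices{x_i}{y_i}$ forces $x_i = y_i$, after which the equivalence $x_k^{x_i} \equiv y_k^{y_i}$ required for the $r^n f$ factor reduces to $x_k \equiv y_k$, again contradicting the hypothesis.

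The main obstacle is really just this case analysis and making sure the bijectivity of $\rho(g)$ on $Q$ is exploited correctly to upgrade equivalence of words to genuine equality of indices under the $\id$-constraints. Once $r$ is shown to be a cochain map, the inclusion $\iota$ splits as $r \circ \iota = \id_{C_\Delta^*}$, so $C_\Delta^*$ is a direct summand of $C_\YB^*$ as cochain complexes. Passing to cohomology, $H^*_\Delta(c_Q;\m)$ appears as a direct summand of $H^*_\YB(c_Q;\m)$, which is the claimed consequence. Note that this argument only establishes the existence of a retraction, not a \emph{homotopy} retraction; the deeper statement of Theorem \ref{thm:QuasiDiagonalCohomology} will require constructing an explicit homotopy $\iota \circ \pi \simeq \id_\YB$, which is deferred to \sref{sec:HomotopyRetraction}.
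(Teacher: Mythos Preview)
Your proposal is correct and follows exactly the paper's approach: define $r^n$ by truncation onto the quasi-diagonal and verify from formula \eqref{eq:CoboundaryFormula} that $d^n_i \circ r^n = r^{n+1} \circ d^n_i$. The paper states this commutation as a one-line consequence of \eqref{eq:CoboundaryFormula}, whereas you have helpfully spelled out the case analysis (including the crucial use of $\rho(a^b) = \rho(a)^{\rho(b)}$ and bijectivity of inner automorphisms); one minor omission is that in your sub-case $k \ne i$ you only treat $k < i$ explicitly, but the case $k > i$ is even easier since $x_k, y_k$ appear unaltered in the $r^n f$ factor.
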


\begin{proof}
  Again the obvious idea turns out to work.  
  We define $r^n \colon C_\YB^n \to C_\Delta^n$ by
  \[
  (r^n f)\smallindices{x_1,\dots,x_n}{y_1,\dots,y_n}
  := \begin{cases} 
    f\smallindices{x_1,\dots,x_n}{y_1,\dots,y_n}
    & \text{ if $x_i \equiv y_i$ for all $i=1,\dots,n$,} 
    \\
    0 & \text{ otherwise.}
  \end{cases}
  \]
  The coboundary formula \eqref{eq:CoboundaryFormula}
  shows that $d_i^n \circ r^n = r^{n+1} \circ d_i^n$,
  whence $d \circ r = r \circ d$.
  By construction we have $r(C_\YB^*) = C_\Delta^*$ 
  and $r|C_\Delta^* = \id$, so $r$ is a retraction, as desired.
\end{proof}


\section{Constructing a homotopy retraction} \label{sec:HomotopyRetraction}

Having set the scene in the preceding sections, we can now study 
the subcomplex $C_\Delta^* \subset C_\YB^*$ of quasi-diagonal cochains.  
It is easy to see that it is a retract, but it is more delicate 
to prove that it is in fact a homotopy retract.  
To this end we introduce an auxiliary filtration 
$C_\YB^* \supset C_1^* \supset \dots \supset C_\infty^* = C_\Delta^*$
of subcomplexes (\sref{sub:Filtration}) and prove that each complex
homotopy retracts to its successor (\sref{sub:Homotopies}).
It then suffices to compose these piecewise homotopies in order to
obtain the desired homotopy retraction from $C_\YB^*$ to $C_\Delta^*$
(\sref{sub:CompositeHomotopy}).

\subsection{An intermediate filtration} \label{sub:Filtration}

We now turn to the problem of finding a homotopy retraction 
to the subcomplex $C_\Delta^* \subset C_\YB^*$ of quasi-diagonal cochains.
The construction of Proposition \ref{prop:QuasiRectraction} 
is nice and simple, but unfortunately the retraction 
$r \colon C_\YB^* \to C_\Delta^*$ does not seem to be 
a homotopy retraction, i.e.\ it is quite likely \emph{not} 
homotopic to the identity of $C_\YB^n$.  

For reasons that will become clear in the following calculations,
it is rather complicated to directly define a homotopy retraction to 
the subcomplex $C_\Delta^* \subset C_\YB^*$. 
The approach that we present here resolves this difficulty 
by induction on a judiciously chosen filtration
\[
C_\YB^* = C_0^* \supset C_1^* \supset C_2^* \supset 
\dots \supset C_\infty^* = C_\Delta^* .
\]
This will allow us to construct the 
homotopy retraction $C_\YB^* \onto C_\Delta^*$
as the composition of partial retractions 
$p^*_m \colon C_m^* \onto C_{m+1}^*$
which are much easier to understand.
Figuratively speaking, we thus construct the deformation from 
$C_\YB^*$ to $C_\Delta^*$ by a piecewise linear path.

\begin{definition}
  For each $m \in \N$ we define $C_m^* \subset C_\YB^*$ to be the subcomplex
  of cochains that are quasi-diagonal in the last $m$ variables.  More explicitly:
  \[
  C_m^n := \bigl\{ f \in C_\YB^n \mid f\smallindices{x_1,\dots,x_n}{y_1,\dots,y_n} = 0
  \text{ if $x_i \not\equiv y_i$ for some index $i$ with $n-m < i \le n$} \bigr\} .
  \]
  In each degree $n$ we thus obtain a filtration 
  $C_\YB^n = C_0^n \supset C_1^n \supset \dots \supset C_n^n$
  that stabilizes at $C_n^n$: obviously $C_m^n = C_n^n$ for all $m > n$.  
  In each degree $n$ its limit is thus $\bigcap_{m} \, C_m^n = C_n^n$.
\end{definition}

\begin{lemma}
  The coboundary $d^n \colon C_\YB^n \to C_\YB^{n+1}$ 
  satisfies $d^n(C_m^n) \subset C_m^{n+1}$ for each $m \in \N$.
  In other words, $(C_m^*,d^*|_{C_m^*})$ is a subcomplex of $(C_\YB^*,d^*)$.
\end{lemma}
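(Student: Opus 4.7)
The plan is to establish the stronger claim that each partial coboundary $d^n_i$ maps $C_m^n$ into $C_m^{n+1}$; the statement for $d^n=\sum_{i=0}^n(-1)^i d^n_i$ then follows by linearity. Working directly from the explicit formula \eqref{eq:CoboundaryFormula}, I fix $f\in C_m^n$, an evaluation $\smallindices{x_0,\dots,x_n}{y_0,\dots,y_n}$, and an index $j\in\{n+1-m,\dots,n\}$ (corresponding to a quasi-diagonal slot of $C_m^{n+1}$) with $x_j\not\equiv y_j$, and aim to show that $(d^n_i f)\smallindices{x_0,\dots,x_n}{y_0,\dots,y_n}=0$ for every $i\in\{0,\dots,n\}$. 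The argument splits into the two cases $j\ne i$ and $j=i$.

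In the case $j\ne i$, a short index check shows that, in both $n$-tuples indexing the two $f$-factors of $d^n_i f$, the variable originally at position $j$ still sits among the last $m$ slots of $f$. In the first summand this variable is $x_j$ itself, and $x_j\not\equiv y_j$ kills the $f$-factor by the hypothesis $f\in C_m^n$. In the second summand the factor $\id\smallindices{x_i}{y_i}$ reduces us to $x_i=y_i$, and then $x_j^{x_i}\not\equiv y_j^{x_i}$ follows from the identity $\rho(a^\phi)=\rho(a)^\phi$, which shows that $\equiv$ is preserved by $\Inn(Q)$; so the $f$-factor vanishes again.

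The case $j=i$ is the crucial one and, I expect, the main obstacle. Now the offending variable is removed from both $f$-indices, so one cannot invoke $f\in C_m^n$ directly. The second summand is still killed by $\id\smallindices{x_i}{y_i}=0$, using $x_i\not\equiv y_i\Rightarrow x_i\ne y_i$. For the first summand the burden is carried instead by the factor $\id\smallindices{x_i^{x_{i+1}\cdots x_n}}{y_i^{y_{i+1}\cdots y_n}}$, and the key observation is that if the $f$-factor is nonzero, then $f\in C_m^n$ forces $x_l\equiv y_l$ for every $l\in\{i+1,\dots,n\}$. Hence $\sigma:=\rho(x_{i+1})\cdots\rho(x_n)=\rho(y_{i+1})\cdots\rho(y_n)$ is a single element of $\Inn(Q)$, and the two indices of the $\id$-factor become $x_i^\sigma$ and $y_i^\sigma$, which differ because $\sigma$ is a bijection and $x_i\ne y_i$; so this $\id$-factor also vanishes.

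The genuine content is thus the algebraic interaction between the $f$- and $\id$-factors when $j=i$: here the equivalence of the tail variables $x_l,y_l$ for $l>i$ propagates through the multiplicative structure of $\Inn(Q)$ to yield a cancellation that the naive pigeonhole ``$x_j$ survives in a quasi-diagonal slot of $f$'' cannot produce. Once this point is isolated, all remaining cases are routine index bookkeeping in the coboundary formula.
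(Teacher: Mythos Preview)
Your argument is correct and follows the same approach as the paper, which simply invokes the explicit coboundary formula \eqref{eq:CoboundaryFormula} and asserts that each $d^n_i f$ lies in $C_m^{n+1}$. You have spelled out the verification that the paper leaves to the reader, including the key case $j=i$, where the vanishing relies on the tail equivalences $x_l\equiv y_l$ for $l>i$ forcing the two inner automorphisms $\rho(x_{i+1})\cdots\rho(x_n)$ and $\rho(y_{i+1})\cdots\rho(y_n)$ to coincide.
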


\begin{proof}
  Suppose that $f \in C_m^n$.  Formula \eqref{eq:CoboundaryFormula}
  for the partial coboundary shows that $d^n_i f$ is again in $C_m^{n+1}$.
  The same thus holds for the coboundary $d^n f = \sum_{i=0}^{n} \, (-1)^i d^n_i f$.
\end{proof}

\begin{notation}
  We will suppress the explicit mention of the coboundary map 
  and denote the complex $(C_\YB^*,d^*)$ simply by $C_\YB^*$.
  Likewise we write $C_m^*$ for the complex $(C_m^*,d^*|_{C_m^*})$.
\end{notation}

\subsection{Cochain homotopies} \label{sub:Homotopies}

We wish to show that the inclusion 
$\iota^*_{\smash{m+1}} \colon C_{\smash{m+1}}^* \into C_m^*$ 
is a \emph{homotopy retract}.  To this end we shall construct 
a cochain map $p^*_m \colon C_m^* \onto C_{\smash{m+1}}^*$ 
such that $p^*_m \circ \iota^*_{\smash{m+1}} = \id^*_{\smash{m+1}}$ 
and a cochain homotopy $\iota^*_{\smash{m+1}} \circ p^*_m 
\simeq \id^*_m \colon C_m^* \to C_m^*$.
Such a projection $p^*_m$ is called a \emph{homotopy retraction}
see \cite[\textsection II.2]{MacLane:1995}.
Recall that a \emph{cochain homotopy} is a map $s^n_m \colon C_m^n \to C_m^{n-1}$ 
such that $p^n_m - \id^n_m = d^{n-1} \circ s^n_m + s^{n+1}_m \circ d^n$.
In the sequel we will prefer the sign convention 
$d^{n-1} \circ s^n_m - s^{n+1}_m \circ d^n$, which is logically equivalent.

\begin{remark}
  We call the set $\Delta = \{ (x,y) \in Q^2 \mid x \equiv y \}$  the \emph{quasi-diagonal}.  
  On its complement $\Delta^c = \{ (x,y) \in Q^2 \mid x \not\equiv y \}$
  we choose a map $\psi \colon \Delta^c \to Q^2$, $(x,y) \mapsto (u,v)$
  such that $u \ne v$ but $u^x = v^y$.  It is easy to see that such a map exists:
  the inequivalence $x \not\equiv y$ means that the inner automorphisms 
  $z \mapsto z \ast x$ and $z \mapsto z \ast y$ are different.
  This is equivalent to saying that their inverses $z \mapsto z \tsa x$ and $z \mapsto z \tsa y$ 
  are different: there exists $z \in Q$ such that $u = z \tsa x$ differs 
  from $v = z \tsa y$.  In other words we have $u \ne v$ but $u^x = v^y$.
\end{remark}

\begin{definition}
  Fix $n,m \in \N$.  For $m \ge n$ we define 
  $s^n_m \colon C_m^n \to C_m^{n-1}$ to be the zero map.
  For $0 \le m \le n-1$ we set $k := n-m$
  and define $s^n_m \colon C_m^n \to C_m^{n-1}$ as follows:
  \[
  (s^n_m f)\smallindices{x_2,\dots,x_n}{y_2,\dots,y_n}
  := \begin{cases} 
    f\smallindices{x_2,\dots,x_{k-1},u,x_k,\dots,x_n}{y_2,\dots,y_{k-1},v,y_k,\dots,y_n}
    & \text{ if $x_k \not\equiv y_k$, with $(u,v) = \psi(x_k,y_k)$,} 
    \\
    0 & \text{ if $x_k \equiv y_k$.}
  \end{cases}
  \]
  This induces a map $t^n_m := d^{n-1} \circ s^n_m - s^{n+1}_m \circ d^n \colon C_m^n \to C_m^n$. 
\end{definition}

\begin{theorem}
  The cochain map $p^n_m := \id^n_m - (-1)^{n-m} t^n_m \colon C_m^n \to C_m^n$ 
  sends $C_m^n$ to the subcomplex $C_{m+1}^n$ and restricts to 
  the identity on $C_{m+1}^n$.  By construction, the maps $\id^*_m$ and $p^*_m$
  are homotopy equivalent, and thus $C_{m+1}^* \into C_m^*$ is a homotopy retract.
  The inclusion thus induces an isomorphism $H^*(C_{m+1}^*) \isoto H^*(C_m^*)$
  on cohomology.
\end{theorem}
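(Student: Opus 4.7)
The theorem has four components to establish: (i) $p^n_m$ restricts to the identity on $C^n_{m+1}$; (ii) $p^n_m$ maps $C^n_m$ into $C^n_{m+1}$; (iii) $p^*_m$ is a cochain map chain-homotopic to $\id^*_m$; and (iv) the inclusion $C^*_{m+1} \hookrightarrow C^*_m$ consequently induces an isomorphism on cohomology. Part (iv) is the standard fact that a homotopy retract induces isomorphisms in cohomology, so it is automatic from (i)--(iii).

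Item (iii) is formal. A short calculation using $d^2 = 0$ yields $d \circ t^n_m + t^{n+1}_m \circ d = 0$, so that $(-1)^n t^n_m$ commutes with $d$ and hence $p^n_m = \id - (-1)^{n-m} t^n_m$ is itself a cochain map. The sign-adjusted homotopies $h^n_m := (-1)^{n-m+1} s^n_m$ then satisfy $d h^n_m + h^{n+1}_m d = p^n_m - \id^n_m$, exhibiting the desired chain homotopy in standard convention.

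For (i), I would verify separately that $s^n_m$ and $s^{n+1}_m \circ d^n$ both annihilate $C^n_{m+1}$. Since $s^n_m$ is triggered only when the position-$k$ entry (with $k = n-m$) of its input is off-diagonal, and membership in $C^n_{m+1}$ forces quasi-diagonality precisely at that position, the first map vanishes identically on $C^n_{m+1}$. For the second, $d^n$ preserves the filtration so $d^n f \in C^{n+1}_{m+1}$, and $s^{n+1}_m$ then probes position $k+1 = (n+1)-m$, which is again quasi-diagonal by hypothesis.

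The main obstacle is (ii). One must show that $(p^n_m f)\smallindices{x_1,\dots,x_n}{y_1,\dots,y_n}$ vanishes whenever $x_k \not\equiv y_k$. Since $f \in C^n_m$ already forces $x_i \equiv y_i$ for $i > k$, this is equivalent to the identity $(t^n_m f)(I) = (-1)^{n-m} f(I)$ at such an index $I$. I would expand both $(d s^n_m f)(I)$ and $(s^{n+1}_m d f)(I)$ by combining the coboundary formula \eqref{eq:CoboundaryFormula} with the explicit insertion rule defining $s$. The Kronecker delta factors in the coboundary, together with the constraints $x_k \not\equiv y_k$ and $x_i \equiv y_i$ for $i > k$, eliminate most of the partial-coboundary terms. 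The defining property $u \ne v$ together with $u^{x_k} = v^{y_k}$ of the map $\psi$ then causes the remaining terms to pair off, leaving a single surviving contribution equal to $(-1)^{n-m} f(I)$. The real work lies in this delicate bookkeeping: identifying exactly which partial coboundaries survive the Kronecker constraints, and checking that the pairwise cancellation hinges on the defining property of $\psi$. The graphical calculus \eqref{eq:CoboundaryGraphic} should be especially useful for organizing the argument.
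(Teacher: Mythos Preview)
Your proposal is correct and follows the paper's approach closely: the formal check in (iii) matches the paper's computation, and your sketch for (ii) --- expanding the partial coboundaries and using the Kronecker constraints together with the defining property $u^{x_k}=v^{y_k}$ of $\psi$ to isolate a single surviving term --- is precisely the content of the paper's first lemma following the theorem, carried out there by a three-case analysis on $i$.

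Your argument for (i) is in fact cleaner than the paper's second lemma. You observe directly that $s^n_m$ annihilates $C^n_{m+1}$: in the non-trivial branch $x_k\not\equiv y_k$, the map evaluates $f$ at an $n$-tuple whose position $k$ is exactly $(x_k,y_k)$, and $f\in C^n_{m+1}$ is quasi-diagonal there; combined with $d^n(C^n_{m+1})\subset C^{n+1}_{m+1}$ and the analogous vanishing of $s^{n+1}_m$ on $C^{n+1}_{m+1}$, this gives $t^n_m f=0$ immediately. The paper instead computes each $(d^{n-1}_i s^n_m f)$ term by term, which is more laborious but has the side benefit of producing the explicit equivariance-defect formula \eqref{eq:EquivarianceDefect} referenced in the subsequent remark. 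Your shortcut forgoes that byproduct but is entirely sufficient for the theorem.
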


\begin{proof}
  We first have to check that $p$ is a cochain map.  
  This follows at once from its definition:
  \begin{align*}
    d^n \circ p^n_m     & = d^n - (-1)^{n-m}\bigl[ d^n d^{n-1} s^n_m - d^n s^{n+1}_m d^n \bigr] ,
    \\
    p^{n+1}_m \circ d^n & = d^n + (-1)^{n-m}\bigl[ d^n s^{n+1}_m d^n - s^{n+2}_m d^{n+1} d^n \bigr] .
  \end{align*}

  The two properties $p^n_m(C_m^n) \subset C_{\smash{m+1}}^n$ 
  and $p^n_m|{C_{\smash{m+1}}^n} = \id^n_{\smash{m+1}}$ 
  will be established in the following two lemmas.
  The remaining statements are standard consequences 
  of cochain homotopy \cite[\textsection II.2]{MacLane:1995}.
\end{proof}

\begin{lemma}
  The map $t^n_m$ satisfies 
  $(t^n_m f)\smallindices{x_1,\dots,x_n}{y_1,\dots,y_n} =
  (-1)^k f\smallindices{x_1,\dots,x_n}{y_1,\dots,y_n}$
  whenever $x_k \not\equiv y_k$.
\end{lemma}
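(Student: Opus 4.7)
The plan is to evaluate $(t^n_m f)\smallindices{x_1,\dots,x_n}{y_1,\dots,y_n}$ by expanding both halves of $t^n_m = d^{n-1} s^n_m - s^{n+1}_m d^n$ via the coboundary formula \eqref{eq:CoboundaryFormula} and the definition of $s$, and showing that a single uncancelled contribution equal to $(-1)^k f\smallindices{x_1,\dots,x_n}{y_1,\dots,y_n}$ remains.

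I would begin with $s^{n+1}_m(d^n f)$. The operator $s^{n+1}_m$ is associated with the parameter $(n+1)-m = k+1$, so its check lands on $(x_k, y_k)$, which is inequivalent by hypothesis; we are thus in the non-vanishing case and $s^{n+1}_m d^n f\smallindices{x}{y}$ equals $(d^n f)\smallindices{x_1,\dots,x_{k-1},u,x_k,\dots,x_n}{y_1,\dots,y_{k-1},v,y_k,\dots,y_n}$ with $(u,v) = \psi(x_k,y_k)$ satisfying $u \ne v$ and $u^{x_k} = v^{y_k}$. Expanding by $\sum_{j}(-1)^j d^n_j$, the partial coboundary $d^n_{k-1}$ (which deletes the inserted slot) provides the main contribution: its type-B half vanishes because $\id\smallindices{u}{v}=0$, and its type-A half collapses to $f\smallindices{x}{y}$ once one shows $\id\smallindices{u^{x_k\cdots x_n}}{v^{y_k\cdots y_n}}=1$. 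This equality follows from the sliding identity $z^{ab} = (z^b)^{a^b}$ (a direct consequence of the rack identity $\rho(a^b) = \rho(b)^{-1}\rho(a)\rho(b)$) applied iteratively, together with $u^{x_k} = v^{y_k}$ and the fact that $x_i \equiv y_i$ for $i > k$ (which holds whenever $f\smallindices{x}{y} \ne 0$; if $f\smallindices{x}{y}=0$ the claim collapses trivially on both sides).

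Then I would expand $d^{n-1}(s^n_m f) = \sum_i (-1)^i d^{n-1}_i(s^n_m f)$ analogously, and pair each of the remaining $j \ne k-1$ summands from $s^{n+1}_m d^n f$ with its counterpart in $d^{n-1} s^n_m f$; the pairing depends on whether the index lies before or after the insertion point $k$. In both regimes the cancellation rests on two facts: the sliding identity, which turns $\id\smallindices{z^{u\,x_k\cdots}}{z'^{v\,y_k\cdots}}$ into $\id\smallindices{z^{x_k\cdots}}{z'^{y_k\cdots}}$, and the vanishing of Kronecker factors of the form $\id\smallindices{x_k^{\sigma}}{y_k^{\sigma}}$, since $x_k \not\equiv y_k$ combined with quasi-diagonality past $k$ forces $x_k^{\sigma} \ne y_k^{\sigma}$ for any $\sigma = \rho(x_{i+1})\cdots\rho(x_n)$ appearing in the exponent. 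What remains is precisely $-(-1)^{k-1}f\smallindices{x}{y} = (-1)^k f\smallindices{x}{y}$, as claimed.

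The principal obstacle is the combinatorial bookkeeping of these pairwise cancellations, with their $\pm 1$ index shifts across the insertion point $k$ and the intricate conjugation exponents carried by the type-B summands. The conceptual crux is the sliding identity, which is precisely what justifies the defining properties $u \ne v$ and $u^{x_k} = v^{y_k}$ of $\psi$: together they kill the offending $\id\smallindices{u}{v}$ in type-B while aligning all other $\id$-factors between the inserted $u,v$ and the original $x_k,y_k$, leaving exactly one uncancelled copy of $f\smallindices{x}{y}$.
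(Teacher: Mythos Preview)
Your proposal is correct and follows essentially the same approach as the paper: both expand $t^n_m$ term by term via the partial coboundaries $d^{n-1}_i s^n_m$ and $s^{n+1}_m d^n_i$, split into the three ranges $i \le k-2$, $i = k-1$, $i \ge k$, and use the rack identity together with $u^{x_k}=v^{y_k}$ and quasi-diagonality beyond position $k$ to obtain the pairwise cancellations (first range), the single surviving term $-(-1)^{k-1}f = (-1)^k f$ (middle), and the individual vanishings (last range). One small point of phrasing: the reduction ``if $f\smallindices{x}{y}=0$ the claim collapses trivially on both sides'' is justified only because every individual term you are comparing carries an $f$-factor evaluated at a tuple whose last $m$ entries are $x_{k+1},\dots,x_n$ (so each term, not just the right-hand side, vanishes when some $x_j\not\equiv y_j$ for $j>k$); it is not that $(t^n_m f)\smallindices{x}{y}$ depends only on $f\smallindices{x}{y}$.
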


\begin{proof}
  We recall our short-hand notation $k := n - m$.
  We will calculate $t^n_m \colon C_m^n \to C_m^n$ by making the individual terms 
  $d^{n-1}_i \circ s^n_m$ and $s^{n+1}_m \circ d^n_i$ explicit for $i = 0,\dots,n$. 
  Let $f \in C_m^n$ and assume $x_k \not\equiv y_k$.
  We shall distinguish the three cases $i \le k-2$ and $i = k-1$ and $i \ge k$.

  \bigskip \textit{First case.} 
  For $i=0,\dots,k-2$ we find:
  \begin{align*}
    (d^{n-1}_i s^n_m f)\indices{x_1,\dots,x_n}{y_1,\dots,y_n} =
    + & (s^n_m f)\indices%
    {x_1^{\phantom{x_{i+1}}},\dots,x_{i}^{\phantom{x_{i+1}}},x_{i+2},\dots,x_k,\dots,x_n}%
    {y_1^{\phantom{y_{i+1}}},\dots,y_{i}^{\phantom{y_{i+1}}},y_{i+2},\dots,y_k,\dots,y_n} 
    \cdot \id\indices{x_{i+1}^{x_{i+2}\cdots x_n}}{y_{i+1}^{y_{i+2}\cdots y_n}} 
    \\ 
    - & (s^n_m f)\indices%
    {x_1^{x_{i+1}},\dots,x_{i}^{x_{i+1}},x_{i+2},\dots,x_k,\dots,x_n}%
    {y_1^{y_{i+1}},\dots,y_{i}^{y_{i+1}},y_{i+2},\dots,y_k,\dots,y_n} 
    \cdot \id\indices{x_{i+1}}{y_{i+1}} 
    \\ 
    = + & f\indices%
    {x_1^{\phantom{x_{i+1}}},\dots,x_{i}^{\phantom{x_{i+1}}},x_{i+2},\dots,u,x_k,\dots,x_n}%
    {y_1^{\phantom{y_{i+1}}},\dots,y_{i}^{\phantom{y_{i+1}}},y_{i+2},\dots,v,y_k,\dots,y_n} 
    \cdot \id\indices{x_{i+1}^{x_{i+2}\cdots x_n}}{y_{i+1}^{y_{i+2}\cdots y_n}} 
    \\ 
    - & f\indices%
    {x_1^{x_{i+1}},\dots,x_{i}^{x_{i+1}},x_{i+2},\dots,u,x_k,\dots,x_n}%
    {y_1^{y_{i+1}},\dots,y_{i}^{y_{i+1}},y_{i+2},\dots,v,y_k,\dots,y_n} 
    \cdot \id\indices{x_{i+1}}{y_{i+1}} 
    \\ 
    = \phantom{+} & (d^n_i f)\indices%
    {x_1,\dots,x_{k-1},u,x_k,\dots,x_n}%
    {y_1,\dots,y_{k-1},v,y_k,\dots,y_n}
    \\ 
    = \phantom{+} & (s^{n+1}_m d^n_i f)\indices{x_1,\dots,x_n}{y_1,\dots,y_n} . 
    \intertext{The third of these four equalities needs justification.
      We have to verify that 
      \begin{align*}
        x_{i+1}^{x_{i+2} \cdots x_{k-1} x_k \cdots x_n} 
        & = y_{i+1}^{y_{i+2} \cdots y_{-1} y_k \cdots y_n}
        \intertext{is equivalent to}
        x_{i+1}^{x_{i+2} \cdots x_{k-1} u x_k \cdots x_n} 
        & = y_{i+1}^{y_{i+2} \cdots y_{k-1} v y_k \cdots y_n}.
      \end{align*}
      We can assume that $x_j \equiv y_j$ for all $k < j \le n$, 
      otherwise the factors involving $f$ vanish by our hypothesis $f \in C_m^n$.  
      So we only have to show that
      \begin{align*}
        x_{i+1}^{x_{i+2} \cdots x_{k-1} x_k} 
        & = y_{i+1}^{y_{i+2} \cdots y_{-1} y_k}
        \intertext{is equivalent to}
        x_{i+1}^{x_{i+2} \cdots x_{k-1} u x_k} 
        & = y_{i+1}^{y_{i+2} \cdots y_{k-1} v y_k}.
      \end{align*}
      This follows from $(a \ast u) \ast x_k = (a \ast x_k) \ast (u \ast x_k)$
      and $(b \ast v) \ast y_k = (b \ast y_k) \ast (v \ast y_k)$,
      and our construction $(u,v) = \psi(x_k,y_k)$ 
      ensures that $u \ast x_k = v \ast y_k$.
      %
      \bigskip \newline 
      \textit{Second case.}
      For $i=k-1$ we find:}
    (d^{n-1}_{k-1} s^n_m f)\indices{x_1,\dots,x_n}{y_1,\dots,y_n} =
    + & (s^n_m f)\indices%
    {x_1^{\phantom{x_{k}}},\dots,x_{k-1}^{\phantom{x_{k}}},x_{k+1},\dots,x_n}%
    {y_1^{\phantom{y_{k}}},\dots,y_{k-1}^{\phantom{y_{k}}},y_{k+1},\dots,y_n} 
    \cdot \id\indices{x_{k}^{x_{k+1}\cdots x_n}}{y_{k}^{y_{k+1}\cdots y_n}} 
    \\ 
    - & (s^n_m f)\indices%
    {x_1^{x_{k}},\dots,x_{k-1}^{x_{k}},x_{k+1},\dots,x_n}%
    {y_1^{y_{k}},\dots,y_{k-1}^{y_{k}},y_{k+1},\dots,y_n} 
    \cdot \id\indices{x_{k}}{y_{k}} 
    \\
    = \phantom{+} & 0 .
    \intertext{The first factors vanish whenever $x_j \not\equiv y_j$ 
      for some $j$ with $k < j \le n$; otherwise the second factors 
      vanish because of our hypothesis $x_k \ne y_k$.
      On the other hand we have:}
    (s^{n+1}_m d^{n}_{k-1} f)\indices{x_1,\dots,x_n}{y_1,\dots,y_n} =
    + & (d^n_{k-1} f)\indices%
    {x_1,\dots,x_{k-1},u,x_k,\dots,x_n}%
    {y_1,\dots,y_{k-1},v,y_k,\dots,y_n} 
    \\
    = + & f\indices%
    {x_1^{\phantom{u}},\dots,x_{k-1}^{\phantom{u}},x_{k},\dots,x_n}%
    {y_1^{\phantom{v}},\dots,y_{k-1}^{\phantom{v}},y_{k},\dots,y_n} 
    \cdot \id\indices{u^{x_{k}\cdots x_n}}{v^{y_{k}\cdots y_n}} 
    \\ 
    - & f\indices%
    {x_1^{u},\dots,x_{k-1}^{u},x_{k},\dots,x_n}%
    {y_1^{v},\dots,y_{k-1}^{v},y_{k},\dots,y_n} 
    \cdot \id\indices{u}{v} 
    \\
    = \phantom{+} & f\indices%
    {x_1,\dots,x_{k-1},x_{k},\dots,x_n}%
    {y_1,\dots,y_{k-1},y_{k},\dots,y_n} .
    \intertext{The first factors vanish whenever $x_j \not\equiv y_j$ 
      for some $j$ with $k < j \le n$; otherwise we have $u \ne v$ 
      with $u^{x_k} = v^{y_k}$, whence $u^{x_{k}\cdots x_n} = v^{y_{k}\cdots y_n}$.
      \bigskip \newline 
      \textit{Third case.} 
      For $i \ge k$ we find:}
    (d^{n-1}_{i} s^n_m f)\indices{x_1,\dots,x_n}{y_1,\dots,y_n} =
    + & (s^n_m f)\indices%
    {x_1^{\phantom{x_{i+1}}},\dots,x_{k}^{\phantom{x_{i+1}}},\dots,x_{i}^{\phantom{x_{i+1}}},x_{i+2},\dots,x_n}%
    {y_1^{\phantom{y_{i+1}}},\dots,y_{k}^{\phantom{y_{i+1}}},\dots,y_{i}^{\phantom{y_{i+1}}},y_{i+2},\dots,y_n} 
    \cdot \id\indices{x_{i+1}^{x_{i+2}\cdots x_n}}{y_{i+1}^{y_{i+2}\cdots y_n}} 
    \\ 
    - & (s^n_m f)\indices%
    {x_1^{x_{i+1}},\dots,x_{k}^{x_{i+1}},\dots,x_{i}^{x_{i+1}},x_{i+2},\dots,x_n}%
    {y_1^{y_{i+1}},\dots,y_{k}^{y_{i+1}},\dots,y_{i}^{y_{i+1}},y_{i+2},\dots,y_n} 
    \cdot \id\indices{x_{i+1}}{y_{i+1}} 
    \\ 
    = \phantom{+} & 0 .
    \intertext{The first summand vanishes because $x_k \not\equiv y_k$;
      the second summand vanishes because $x_{i+1} \ne y_{i+1}$ or 
      $x_{k}^{x_{i+1}} \not\equiv y_{k}^{y_{i+1}}$.  Analogously:}
    (s^{n+1}_m d^{n}_{k} f)\indices{x_1,\dots,x_n}{y_1,\dots,y_n} =
    + & (d^n_{k} f)\indices%
    {x_1,\dots,x_{k-1},u,x_k,\dots,x_n}%
    {y_1,\dots,y_{k-1},v,y_k,\dots,y_n} 
    \\
    = + & f\indices%
    {x_1^{\phantom{x_{k}}},\dots,x_{k-1}^{\phantom{x_{k}}},u^{\phantom{x_{k}}},x_{k+1},\dots,x_n}%
    {y_1^{\phantom{y_{k}}},\dots,y_{k-1}^{\phantom{y_{k}}},v^{\phantom{y_{k}}},y_{k+1},\dots,y_n} 
    \cdot \id\indices{x_k^{x_{k+1}\cdots x_n}}{y_k^{y_{k+1}\cdots y_n}} 
    \\ 
    - & f\indices%
    {x_1^{x_{k}},\dots,x_{k-1}^{x_{k}},u^{x_{k}},x_{k+1},\dots,x_n}%
    {y_1^{y_{k}},\dots,y_{k-1}^{y_{k}},v^{y_{k}},y_{k+1},\dots,y_n} 
    \cdot \id\indices{x_k}{y_k} 
    \\ 
    = \phantom{+} & 0 .
    \intertext{The first factors vanish whenever $x_j \not\equiv y_j$ 
      for some $j$ with $k < j \le n$; otherwise the second factors 
      vanish because of our hypothesis $x_k \ne y_k$.
      The same conclusion holds for $i > k$:}
    (s^{n+1}_m d^{n}_{i} f)\indices{x_1,\dots,x_n}{y_1,\dots,y_n} =
    + & (d^n_{i} f)\indices%
    {x_1,\dots,x_{k-1},u,x_k,\dots,x_n}%
    {y_1,\dots,y_{k-1},v,y_k,\dots,y_n} 
    \\
    = + & f\indices%
    {x_1^{\phantom{x_{i}}},\dots,x_{k-1}^{\phantom{x_{i}}},u^{\phantom{x_{i}}},x_{k}^{\phantom{x_{i}}},
      \dots,x_{i-1}^{\phantom{x_{i}}},x_{i+1},\dots,x_n}%
    {y_1^{\phantom{y_{i}}},\dots,y_{k-1}^{\phantom{y_{i}}},v^{\phantom{y_{i}}},y_{k}^{\phantom{y_{i}}},
      \dots,y_{i-1}^{\phantom{y_{i}}},y_{i+1},\dots,y_n} 
    \cdot \id\indices{x_i^{x_{i+1}\cdots x_n}}{y_i^{y_{i+1}\cdots y_n}} 
    \\ 
    - & f\indices%
    {x_1^{x_{i}},\dots,x_{k-1}^{x_{i}},u^{x_{i}},x_{k}^{x_{i}},\dots,x_{i-1}^{x_{i}},x_{i+1},\dots,x_n}%
    {y_1^{y_{i}},\dots,y_{k-1}^{y_{i}},v^{y_{i}},y_{k}^{y_{i}},\dots,y_{i-1}^{y_{i}},y_{i+1},\dots,y_n} 
    \cdot \id\indices{x_i}{y_i} 
    \\ 
    = \phantom{+} & 0 .
  \end{align*}
  The first summand vanishes because $x_k \not\equiv y_k$;
  the second summand vanishes because $x_{i} \ne y_{i}$ or 
  $x_{k}^{x_{i}} \not\equiv y_{k}^{y_{i}}$.
\end{proof}

\begin{lemma}
  The map $t^n_m$ satisfies $t^n_m f = 0$ whenever $f \in C_{m+1}^n$.
\end{lemma}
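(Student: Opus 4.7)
The plan is to reduce the lemma to a single stronger observation: the operator $s^n_m$ itself vanishes identically on $C_{m+1}^n$. Granted this, the earlier lemma that $d^n$ preserves the filtration gives $d^n f \in C_{m+1}^{n+1}$, so the same observation applied one degree higher yields $s^{n+1}_m(d^n f) = 0$. Both summands of $t^n_m f = d^{n-1}(s^n_m f) - s^{n+1}_m(d^n f)$ then vanish, proving $t^n_m f = 0$.

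To justify the observation I would evaluate $(s^n_m f)\smallindices{x_2,\dots,x_n}{y_2,\dots,y_n}$ at an arbitrary input. If $x_k \equiv y_k$ the value is $0$ by definition of $s^n_m$. Otherwise it equals $f\smallindices{x_2,\dots,x_{k-1},u,x_k,\dots,x_n}{y_2,\dots,y_{k-1},v,y_k,\dots,y_n}$ with $(u,v)=\psi(x_k,y_k)$. Numbering the arguments of $f$ from $1$ to $n$, the inserted pair $(u,v)$ occupies position $k-1$ while the pair $(x_k,y_k)$ is pushed to position $k$. Since $f \in C_{m+1}^n$ annihilates any entry whose pair at some position $i \ge k$ is non-equivalent, and here the position-$k$ pair is precisely $(x_k,y_k)$ with $x_k \not\equiv y_k$ by hypothesis, the $f$-entry vanishes. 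The same argument applies verbatim to $s^{n+1}_m$ on $C_{m+1}^{n+1}$: the parameter shifts from $k$ to $k+1$ in passing from degree $n$ to degree $n+1$, in exact step with the filtration index.

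The only real obstacle is notational, namely aligning the two labelling conventions (the labels $x_2,\dots,x_n$ for arguments of $s^n_m f$ versus the positional counting of $f$'s arguments) so that $\psi$ is visibly forced to introduce non-equivalence at exactly the coordinate whose equivalence is demanded by membership in $C_{m+1}^n$. Once this alignment is transparent the conclusion drops out almost tautologically, which is why the lemma is far shorter than its predecessor: there $f$ lived only in $C_m^n$ and a genuine cancellation at coboundary index $i = k-1$ had to be produced by hand.
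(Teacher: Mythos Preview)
Your argument is correct and in fact cleaner than the paper's own proof.  The key observation you isolate --- that $s^n_m$ already annihilates $C_{m+1}^n$ as an operator --- is valid: when $x_k \not\equiv y_k$ the inserted pair $(u,v)$ pushes $(x_k,y_k)$ precisely to position $k$ of the $n$-tuple fed to $f$, and membership in $C_{m+1}^n$ forces vanishing at any position $i\ge k$ with $x_i\not\equiv y_i$.  The same reasoning with $k$ replaced by $k+1$ shows $s^{n+1}_m$ kills $C_{m+1}^{n+1}$, and since $d^n(C_{m+1}^n)\subset C_{m+1}^{n+1}$ by the filtration lemma, both summands of $t^n_m f$ vanish.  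Your index bookkeeping is accurate.

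The paper takes a different route: it evaluates $(t^n_m f)\smallindices{x_1,\dots,x_n}{y_1,\dots,y_n}$ entry by entry, invoking the preceding lemma for the case $x_k\not\equiv y_k$ and then, for $x_k\equiv y_k$, expanding each $d^{n-1}_i(s^n_m f)$ separately before finally appealing to $f\in C_{m+1}^n$.  Your approach short-circuits this entire computation by noticing that $s^n_m f$ is the zero cochain, so there is nothing to expand.  What the paper's longer calculation buys is the intermediate expression \eqref{eq:EquivarianceDefect}, which it uses in the subsequent remark to explain why $p^n_m$ is not the naive truncation when $f$ lies only in $C_m^n$; your argument, being tailored to $C_{m+1}^n$, does not produce this side information but proves the lemma itself more transparently.
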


\begin{proof}
  We show that $(t^n_m f)\smallindices{x_1,\dots,x_n}{y_1,\dots,y_n} = 0$
  for all $f \in C_{m+1}^n$ and all $x_1,\dots,x_n,y_1,\dots,y_n \in Q$.
  The previous lemma resolves the case $x_k \not\equiv y_k$,
  so it suffices to consider the remaining case where $x_k \equiv y_k$. 
  By definition of $s^{n+1}_m$ we have $(s^{n+1}_m d^n_i f)%
  \smallindices{x_1,\dots,x_n}{y_1,\dots,y_n} = 0$
  because $x_k \equiv y_k$.  Likewise, for $i \le k-2$ we find:
  \begin{align*}
    (d^{n-1}_i s^n_m f)\indices{x_1,\dots,x_n}{y_1,\dots,y_n} =
    + & (s^n_m f)\indices%
    {x_1^{\phantom{x_{i+1}}},\dots,x_{i}^{\phantom{x_{i+1}}},x_{i+2},\dots,x_k,\dots,x_n}%
    {y_1^{\phantom{y_{i+1}}},\dots,y_{i}^{\phantom{y_{i+1}}},y_{i+2},\dots,y_k,\dots,y_n} 
    \cdot \id\indices{x_{i+1}^{x_{i+2}\cdots x_n}}{y_{i+1}^{y_{i+2}\cdots y_n}} 
    \\ 
    - & (s^n_m f)\indices%
    {x_1^{x_{i+1}},\dots,x_{i}^{x_{i+1}},x_{i+2},\dots,x_k,\dots,x_n}%
    {y_1^{y_{i+1}},\dots,y_{i}^{y_{i+1}},y_{i+2},\dots,y_k,\dots,y_n} 
    \cdot \id\indices{x_{i+1}}{y_{i+1}} 
    \\ 
    = \phantom{+} & 0 .
    \intertext{For $i \ge k-1$, however, we find:}
    (d^{n-1}_{i} s^n_m f)\indices{x_1,\dots,x_n}{y_1,\dots,y_n} =
    + & (s^n_m f)\indices%
    {x_1^{\phantom{x_{i+1}}},\dots,x_{k-1}^{\phantom{x_{i+1}}},\dots,x_{i}^{\phantom{x_{i+1}}},x_{i+2},\dots,x_n}%
    {y_1^{\phantom{y_{i+1}}},\dots,y_{k-1}^{\phantom{y_{i+1}}},\dots,y_{i}^{\phantom{y_{i+1}}},y_{i+2},\dots,y_n} 
    \cdot \id\indices{x_{i+1}^{x_{i+2}\cdots x_n}}{y_{i+1}^{y_{i+2}\cdots y_n}} 
    \\ 
    - & (s^n_m f)\indices%
    {x_1^{x_{i+1}},\dots,x_{k-1}^{x_{i+1}},\dots,x_{i}^{x_{i+1}},x_{i+2},\dots,x_n}%
    {y_1^{y_{i+1}},\dots,y_{k-1}^{y_{i+1}},\dots,y_{i}^{y_{i+1}},y_{i+2},\dots,y_n} 
    \cdot \id\indices{x_{i+1}}{y_{i+1}} .
  \end{align*}
  The summands are non-zero only if $x_{i+1} = y_{i+1}$
  and $x_j \equiv y_j$ for all $j$ with $k \le j \le n$:
  in this case their difference measures the defect of $s^n_m f$ 
  to being equivariant (jointly in the first $i$ variables).
  Both summands vanish if $x_{k-1} \equiv y_{k-1}$, 
  so let us assume $x_{k-1} \not\equiv y_{k-1}$:
  \begin{align} \label{eq:EquivarianceDefect}
    (d^{n-1}_{i} s^n_m f)\indices{x_1,\dots,x_n}{y_1,\dots,y_n} 
    = + & f\indices%
    {x_1^{\phantom{x_{i+1}}},\dots,u'\phantom{'},x_{k-1}^{\phantom{x_{i+1}}},
      \dots,x_{i}^{\phantom{x_{i+1}}},x_{i+2},\dots,x_n}%
    {y_1^{\phantom{y_{i+1}}},\dots,v'\phantom{'},y_{k-1}^{\phantom{y_{i+1}}},
      \dots,y_{i}^{\phantom{y_{i+1}}},y_{i+2},\dots,y_n} 
    \\ \notag
    - & f\indices%
    {x_1^{x_{i+1}},\dots,u'',x_{k-1}^{x_{i+1}},\dots,x_{i}^{x_{i+1}},x_{i+2},\dots,x_n}%
    {y_1^{y_{i+1}},\dots,v'',y_{k-1}^{y_{i+1}},\dots,y_{i}^{y_{i+1}},y_{i+2},\dots,y_n} 
  \end{align}
  Here $(u',v') = \psi(x_{k-1},y_{k-1})$ and 
  $(u'',v'') = \psi(x_{k-1}^{x_{i+1}},y_{k-1}^{y_{i+1}})$.
  For $f \in C_m^n$ the contributions do in general not cancel.
  We see, however, that both summands vanish if $f \in C_{m+1}^n$.
\end{proof}

\begin{remark} \label{rem:Equivariance}
  Equation \eqref{eq:EquivarianceDefect} shows that 
  $(t^n_m f) \smallindices{x_1,\dots,x_n}{y_1,\dots,y_n}$
  can be non-zero for $f \in C_{m}^n$, 
  if $x_k \equiv y_k$ but $x_{k-1} \not\equiv y_{k-1}$.
  This equation 
  measures the defect of the cochain $f$,
  and our auxiliary map $\psi \colon (x_{k-1},y_{k-1}) \to (u,v)$, 
  to be equivariant under the action of $|\Inn(Q)|$.
  In the equivariant setting of \cite{Eisermann:2005}
  this defect disappears, and the projection $p^n_m$ becomes 
  \[
  (p^n_m f)\smallindices{x_1,\dots,x_n}{y_1,\dots,y_n}
  := \begin{cases} 
    0 & \text{ if $x_j \not\equiv y_j$ for some $j$ with $n-m \le j \le n$,} 
    \\ 
    f\smallindices{x_1,\dots,x_n}{y_1,\dots,y_n} & \text{ otherwise.}
  \end{cases}
  \]
  This simplified formula has been used in \cite{Eisermann:2005}, where 
  symmetrization was applied throughout to simplify calculations.
  In our present setting we cannot apply symmetrization 
  and thus cannot assume equivariance. 
  It is remarkable, therefore, that the above calculations carry through.
  The price to pay is that 
  the projection $p^n_m$ has a more complicated form.
\end{remark}

\subsection{Composition of homotopy retractions} \label{sub:CompositeHomotopy}

Having constructed homotopy retractions
$C_0^* \onto C_1^* \onto \dots \onto C_{m-1}^* \onto C_{m}^*$
in \sref{sub:Homotopies}, it now suffices to put the pieces together: 

\begin{corollary}
  The subcomplex $C_\Delta^*$ of quasi-diagonal cochains
  is a homotopy retract of the full Yang-Baxter cochain complex $C_\YB^*$.
  As a consequence the inclusion $C_\Delta^* \into C_\YB^*$ induces 
  an isomorphism on cohomology, $H^*(C_\Delta^*) \isoto H^*(C_\YB^*)$.
\end{corollary}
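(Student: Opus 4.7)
The plan is to assemble the piecewise homotopy retractions $p_m^\ast \colon C_m^\ast \onto C_{m+1}^\ast$ constructed in \sref{sub:Homotopies} into a single homotopy retraction $C_\YB^\ast \onto C_\Delta^\ast$. The key observation that makes the argument straightforward is that the filtration $C_\YB^\ast = C_0^\ast \supset C_1^\ast \supset \dots$ is \emph{eventually constant in each fixed degree}: by the definition of $C_m^n$ we have $C_m^n = C_n^n = C_\Delta^n$ as soon as $m \ge n$. Consequently, even if $Q$ is infinite, at each cochain degree we only ever need to compose finitely many of the $p_m^\ast$.

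Concretely, in degree $n$ I would define the composite projection
\[
P^n := p_{n-1}^n \circ p_{n-2}^n \circ \cdots \circ p_0^n \colon C_\YB^n = C_0^n \longrightarrow C_\Delta^n ,
\]
which is well defined because each $p_m^n$ is defined on $C_m^n$ and maps into $C_{m+1}^n$. Since every $p_m^\ast$ is a cochain map (already established in the theorem of \sref{sub:Homotopies}) and restricts to the identity on $C_{m+1}^\ast$, the composition $P^\ast$ is automatically a cochain map and satisfies $P^\ast \circ \iota_\Delta^\ast = \id_\Delta^\ast$, where $\iota_\Delta^\ast \colon C_\Delta^\ast \into C_\YB^\ast$ is the inclusion. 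So $P^\ast$ is a genuine retraction of cochain complexes.

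To upgrade this to a homotopy retraction I would use the standard telescoping formula for composing cochain homotopies. Each $p_m^\ast$ comes with an explicit homotopy $s_m^\ast$ satisfying $\iota^\ast_{m+1} \circ p_m^\ast - \id^\ast_m = d \circ s_m^\ast - s_m^\ast \circ d$ (up to the sign convention fixed in \sref{sub:Homotopies}). Writing $P^{(k)} := p_{k-1}^\ast \circ \cdots \circ p_0^\ast$ with $P^{(0)} = \id$, a routine telescoping gives
\[
P^{(n)} - \id = \sum_{m=0}^{n-1} \bigl( P^{(m+1)} - P^{(m)} \bigr)
= \sum_{m=0}^{n-1} \bigl( p_m^\ast - \id \bigr) \circ P^{(m)} ,
\]
and substituting $p_m^\ast - \id = d s_m^\ast - s_m^\ast d$ shows that the operator
\[
S^n := \sum_{m=0}^{n-1} s_m^n \circ P^{(m)}_n
\]
provides the sought cochain homotopy $\iota_\Delta^\ast \circ P^\ast \simeq \id_{\YB}^\ast$, once one uses that each $P^{(m)}$ is a cochain map to move $d$ past it. The usual Eilenberg argument then gives the isomorphism $H^\ast(C_\Delta^\ast) \isoto H_\YB^\ast(c_Q;\m)$.

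The only thing requiring care is the bookkeeping of signs in $p_m^n = \id - (-1)^{n-m} t^n_m$ when combining the $s_m$'s across the telescoping sum; this amounts to verifying the identity
\[
d \circ S^n - S^{n+1} \circ d = P^n - \id^n
\]
by an induction on $n$ that uses the homotopy identity for $s_m$ together with the fact that $p_j^\ast$ is a cochain map for $j < m$. This is the main (and really the only) obstacle, but it is essentially formal: no further analysis of the rack structure of $Q$, nor of the auxiliary map $\psi$, is needed beyond what was already expended to build the individual $s_m^\ast$.
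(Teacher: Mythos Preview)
Your proposal is correct and follows essentially the same approach as the paper: compose the piecewise retractions $p_m^\ast$ and use that the filtration stabilises degree-wise to define the limit $P^\ast$. The paper is terser, simply invoking the standard fact that a composition of homotopic cochain maps is again homotopic and then setting $P_\infty^n := P_n^n$, whereas you spell out the telescoping formula for the composite homotopy; the content is the same.
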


\begin{proof}
  The composition of homotopic cochain maps yields again homotopic cochain maps.  
  As a consequence, the composition of our partial homotopy retractions 
  yields again a homotopy retraction
  \[
  P^*_m := p^*_{m-1} \circ p^*_{m-2} \circ \cdots \circ p^*_1 \circ p^*_0 \colon C_0^* \to C_m^* .
  \]
  This shows that the inclusion $C_m^* \into C_\YB^*$ is a homotopy retract.
  We wish to pass to the limit $C_\Delta^* = \bigcap_m C_m^*$.
  In each degree $n$ we have $p^n_m =  \id^n_n$ for all $m \ge n$, and thus $P^n_m = P^n_n$.
  We can thus define $P^*_\infty = \lim_{m \to \infty} P^*_m$ as the
  degree-wise limit $P^n_\infty = P^n_n$.  We conclude that
  $C_\Delta^* \into C_\YB^*$ is a homotopy retract.
\end{proof}


\section{From infinitesimal to complete deformations} \label{sec:CompleteDeformations}

In this section we will pass from infinitesimal to complete deformations.
In order to do so, we will assume that the ring $\A$ is complete 
with respect to the ideal $\m$, that is, we assume that 
the natural map $\A \to \varprojlim \A/\m^n$ is an isomorphism.

\begin{example} \label{exm:PowerSeriesRing}
  A polynomial ring $\K[h]$ 
  is not complete with respect to the ideal $(h)$.
  Its completion is the power series ring $\K\fps{h} = \varprojlim \K[h]/(h^n)$.
  The latter is complete with respect to its ideal $\m = (h)$.
  If $\K$ is a field, then $\K\fps{h}$ is a complete local ring,
  which means that $\m$ is the unique maximal ideal 
  and $\K\fps{h}$ is complete with respect to $\m$.
\end{example}

\begin{example} \label{exm:PadicIntegers}
  The ring of integers $\Z$ is not complete with respect to 
  the ideal $(p)$, where $p$ will be assumed to be prime.
  Its completion is the ring of $p$-adic integers
  $\Z_{p} = \varprojlim \Zmod{p^n}$.  The latter
  is complete with respect to its unique maximal ideal $\m = (p)$.
\end{example}

Completions lend themselves to induction techniques.
As the inductive step, we assume that $\m^{n+1} = 0$.
One can always force this condition by passing to the quotient $\A/\m^{n+1}$.

\begin{lemma} \label{lem:InductiveStep}
  Consider a ring $\A$ with ideal $\m$ such that $\m^{n+1} = 0$.
  Let $c \colon \A{Q^2} \to \A{Q^2}$ be a Yang-Baxter operator that 
  satisfies $c \equiv c_Q$ modulo $\m$ and is quasi-diagonal modulo $\m^n$.
  Then 
  there exists $\alpha\colon \A{Q} \to \A{Q}$ with $\alpha \equiv \id_V$ modulo $\m^n$, 
  such that $(\alpha\tensor\alpha)^{-1} \circ c \circ (\alpha\tensor\alpha)$ 
  is a quasi-diagonal deformation of $c_Q$.
\end{lemma}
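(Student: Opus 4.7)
My plan is to reduce the inductive step to a cohomological vanishing statement and then invoke Theorem \ref{thm:QuasiDiagonalCohomology}. First, I write $c = c_Q \circ (\id^{\tensor 2} + F)$ for some $F \in \Hom(\A{Q^2}, \m \A{Q^2})$ and split $F = F_\Delta + F^\perp$ via the simple retraction of Proposition \ref{prop:QuasiRectraction}, so that $F_\Delta$ is quasi-diagonal and $F^\perp$ vanishes on every quasi-diagonal index. The hypothesis that $c$ is quasi-diagonal modulo $\m^n$ translates precisely to $F^\perp \in \Hom(\A{Q^2}, \m^n \A{Q^2})$. For any $g \in \Hom(\A{Q}, \m^n \A{Q})$, setting $\alpha := \id + g$, the hypotheses $g \in \m^n$ and $\m^{n+1} = 0$ force all products of two terms from $\m^n$ to vanish, as well as all products of $g$ with $c - c_Q \in \m$. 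A direct computation along the lines of the infinitesimal proposition of \sref{sub:YangBaxterDeformations} then gives
\[
(\alpha\tensor\alpha)^{-1} \circ c \circ (\alpha\tensor\alpha) = c_Q \circ (\id^{\tensor 2} + F + d^1 g) .
\]
The task is thereby reduced to producing such a $g$ with $F^\perp + d^1 g \in C_\Delta^2$, i.e., to exhibiting $F^\perp$ as a coboundary in the quotient complex $C_\YB^*(c_Q; \m^n) / C_\Delta^*(c_Q; \m^n)$.

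The next step is to check that $F^\perp$ is at least a cocycle in this quotient, that is, $d^2 F^\perp \in C_\Delta^3$ modulo $\m^{n+1}$. I would extract this from the Yang-Baxter identity $E(c) = (c\tensor\id)(\id\tensor c)(c\tensor\id) - (\id\tensor c)(c\tensor\id)(\id\tensor c) = 0$. Expanding $E(c_Q(\id + F))$ in powers of $F$ produces a formal Maurer-Cartan-type identity
\[
d^2 F + \Phi(F, F) + \Psi(F, F, F) + \cdots = 0 ,
\]
where the multilinear corrections $\Phi, \Psi, \ldots$ are fixed polynomial expressions in $F$ mediated by $c_Q^{\pm 1}$. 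Substituting $F = F_\Delta + F^\perp$ and using $F_\Delta \in \m$, $F^\perp \in \m^n$, $\m^{n+1} = 0$, every mixed term and every multilinear term quadratic or higher in $F^\perp$ vanishes, leaving
\[
d^2 F^\perp \equiv -\bigl( d^2 F_\Delta + \Phi(F_\Delta, F_\Delta) + \Psi(F_\Delta, F_\Delta, F_\Delta) + \cdots \bigr) \pmod{\m^{n+1}} .
\]
A direct inspection of \eqref{eq:CoboundaryFormula}, together with the fact that $c_Q$-mediated compositions of quasi-diagonal matrices preserve the quasi-diagonal structure, shows each multilinear correction $\Phi(F_\Delta, F_\Delta), \Psi(F_\Delta, F_\Delta, F_\Delta), \ldots$ to lie in $C_\Delta^3$, so that $d^2 F^\perp$ indeed lies in $C_\Delta^3$ modulo $\m^{n+1}$.

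With the cocycle condition in hand, I apply Theorem \ref{thm:QuasiDiagonalCohomology} with coefficient module $\m^n$. The homotopy retraction of Section \ref{sec:HomotopyRetraction} depends only on the rack structure of $Q$ and is therefore natural in the coefficients; the vanishing $\m \cdot \m^n = 0$ places us squarely in the infinitesimal setting where the theorem applies. Combining the resulting quasi-isomorphism $C_\Delta^*(c_Q; \m^n) \hookrightarrow C_\YB^*(c_Q; \m^n)$ with the short exact sequence $0 \to C_\Delta^* \to C_\YB^* \to C_\YB^*/C_\Delta^* \to 0$ forces the quotient complex to be acyclic. The cocycle $F^\perp$ in degree $2$ is then a coboundary, which produces the desired $g \in \Hom(\A{Q}, \m^n \A{Q})$ with $F^\perp + d^1 g \in C_\Delta^2$; conjugation by $\alpha = \id + g$ then yields a quasi-diagonal deformation.

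The main obstacle I anticipate is the quasi-diagonal compatibility of the higher-order Maurer-Cartan corrections $\Phi(F_\Delta, F_\Delta), \Psi(F_\Delta, F_\Delta, F_\Delta), \ldots$. This is transparent at the linear level (the coboundary $d^2$ preserves $C_\Delta$ by Proposition \ref{prop:YangBaxterCoboundary}), but the quadratic and higher contributions braid the three tensor slots through $c_Q^{\pm 1}$, so verifying that the resulting matrices remain quasi-diagonal requires either a somewhat tedious combinatorial check directly from \eqref{eq:CoboundaryFormula} or, more cleanly, a diagrammatic argument using the graphical calculus of Section \ref{sec:Definitions} to make the structural cancellations manifest.
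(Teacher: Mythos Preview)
Your proposal is correct and follows the same overall strategy as the paper: isolate the non-quasi-diagonal part $f := F^\perp \in C_\YB^2(c_Q;\m^n)$, show it is a cocycle, and invoke Theorem~\ref{thm:QuasiDiagonalCohomology} to produce $g$ with $f + d^1 g$ quasi-diagonal, so that $\alpha = \id + g$ does the job.

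Where the paper is more efficient is precisely the step you flag as your main obstacle. Instead of expanding the Yang--Baxter error of $c$ as a Maurer--Cartan series in $F$ and checking each multilinear term $\Phi(F_\Delta,F_\Delta)$, $\Psi(F_\Delta,F_\Delta,F_\Delta)$, \dots\ individually for quasi-diagonality, the paper uses a \emph{separation trick}: it forms the quasi-diagonal truncation $\bar{c} := c_Q(\id^{\tensor 2} + F_\Delta) = c \circ (\id^{\tensor 2} - f)$ and computes its Yang--Baxter error directly. Since $\bar{c}$ deforms the Yang--Baxter operator $c$ by $-f$ with $\m \cdot \m^n = 0$, this error equals $d^2 f$. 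On the other hand the error is manifestly quasi-diagonal, because $\bar{c} = c_Q \cdot D$ with $D$ quasi-diagonal, conjugation by $c_Q^{\pm 1}$ preserves quasi-diagonality (the relation $\equiv$ is a rack congruence), and the two sextuple products of $c_Q$'s agree by the Yang--Baxter equation for $c_Q$; so the whole expression collapses to a difference of quasi-diagonal maps. Combined with the fact that $d^2 f$ vanishes on the quasi-diagonal (since $r \circ d = d \circ r$ and $r f = 0$, by Proposition~\ref{prop:QuasiRectraction}), one obtains $d^2 f = 0$ outright, not merely modulo $C_\Delta^3$. This packages your anticipated combinatorial or diagrammatic verification into a single structural observation, and lets the paper apply Theorem~\ref{thm:QuasiDiagonalCohomology} directly in $H_\YB^2$ rather than via the quotient complex.
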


\begin{proof}
  We have $c = c_Q \circ F$ with $F \equiv \id_V^{\tensor 2}$ modulo $\m$.
  We write $F$ in matrix notation as a map $F \colon Q^2 \times Q^2 \to \A$.
  Its non-quasi-diagonal part $f \colon Q^2 \times Q^2 \to \A$ is defined by 
  \[
  f\smallindices{x_1,x_2}{y_1,y_2}
  := \begin{cases} 
    0 & \text{ if $x_1 \equiv y_1$ and $x_2 \equiv y_2$,} \\
    F\smallindices{x_1,x_2}{y_1,y_2} & \text{ otherwise.}
  \end{cases}
  \]
  By hypothesis $f$ takes values in $\m^n \subset \A$,
  and can thus be considered as a cochain $C_\YB^2(c_Q;\m^n)$.
  The map $\bar{c} = c_Q \circ ( F - f ) = c \circ ( \id_V^{\tensor 2} - f )$
  is quasi-diagonal, by construction.  

  We claim that $\bar{c}$ is actually a Yang-Baxter operator.
  We know that $c$ satisfies the Yang-Baxter equation;
  its deformation $\bar{c}$ thus satisfies
  \begin{equation} \label{eq:SeparationTrick}
    \id_V^{\tensor 3} - 
    (\id_V \tensor \bar{c})^{-1}(\bar{c} \tensor \id_V)^{-1}
    (\id_V \tensor \bar{c})^{-1}(\bar{c} \tensor \id_V)
    (\id_V \tensor \bar{c})(\bar{c} \tensor \id_V) 
    = d^2 f .
  \end{equation}
  It is easy to check that the left-hand side is a quasi-diagonal map,
  whereas the right-hand side is zero on the quasi-diagonal.
  We conclude that \emph{both} sides must vanish. 
  This means that $\bar{c}$ satisfies the Yang-Baxter equation, 
  and that $f \in C_\YB^2(c_Q;\m^n)$ is a cocycle.
  
  By Theorem \ref{thm:QuasiDiagonalCohomology},
  the inclusion $C_\Delta^*(c_Q;\m^n) \subset C_\YB^*(c_Q;\m^n)$
  induces an isomorphism on cohomology.  The class $[f] \in C_\YB^2(c_Q;\m^n)$
  can thus be presented by a quasi-diagonal cocycle $\tilde{f} \in C_\Delta^2(c_Q;\m^n)$.
  This means that there exists a cochain $g \in C_\YB^1(c_Q;\m^n)$ such that $\tilde{f} = f + d^1 g$.
  We conclude that $\alpha = \id_V + g$ conjugates $c$ to a quasi-diagonal
  Yang-Baxter operator $\tilde{c} = (\alpha\tensor\alpha)^{-1} \circ c \circ (\alpha\tensor\alpha)$,
  as desired.
\end{proof}

\begin{remark}
  In the preceding proof the construction and analysis 
  of $\bar{c}$ serve to show that $f$ is a $2$-cocycle.  
  The separation trick of Equation \eqref{eq:SeparationTrick} 
  is taken from \cite[\textsection4]{Eisermann:2005}.
  I seize the opportunity to point out that there 
  the difference \eqref{eq:SeparationTrick} 
  is misprinted and lacks the term  $\id_V^{\tensor 3}$. 
  With this small correction the argument applies as intended.
\end{remark}

\begin{remark}
  In the proof of Lemma \ref{lem:InductiveStep}
  we do not claim that $c$ is conjugate to $\bar{c}$.
  This is true in the equivariant setting of \cite{Eisermann:2005}, 
  but without equivariance it is false in general:
  the coboundary $d^1 g$ kills the non-quasi-diagonal part 
  but usually also changes the quasi-diagonal part
  (see Remark \ref{rem:Equivariance}).
\end{remark}


\begin{theorem}
  Let $\A$ be a ring that is complete with respect to the ideal $\m$.
  Then every Yang-Baxter deformation of $c_Q$ over $\A$ 
  is equivalent to a quasi-diagonal deformation.
\end{theorem}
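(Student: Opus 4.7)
The plan is to bootstrap Lemma \ref{lem:InductiveStep} by induction on the order of approximation and then pass to the limit using the $\m$-adic completeness of $\A$. Given a Yang-Baxter deformation $c$ of $c_Q$, I would inductively construct automorphisms $\alpha_1, \alpha_2, \dots \in \Aut_\A(\A{Q})$ with $\alpha_k \equiv \id$ modulo $\m^k$, together with Yang-Baxter operators $c^{(1)} := c$ and
\[
c^{(k+1)} \;:=\; (\alpha_k \tensor \alpha_k)^{-1} \circ c^{(k)} \circ (\alpha_k \tensor \alpha_k) ,
\]
with the property that $c^{(k+1)}$ is quasi-diagonal modulo $\m^{k+1}$. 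The base case $k=1$ is immediate because $c \equiv c_Q$ modulo $\m$ and $c_Q$ is diagonal. For the inductive step I would reduce modulo $\m^{k+1}$, work over the quotient ring $\bar{\A} := \A/\m^{k+1}$ (whose distinguished ideal $\bar{\m} := \m/\m^{k+1}$ satisfies $\bar{\m}^{k+1} = 0$), and apply Lemma \ref{lem:InductiveStep} to the reduction $\bar{c}^{(k)}$: this yields an automorphism $\bar{\alpha}_k$ of $\bar{\A}{Q}$ with $\bar{\alpha}_k \equiv \id$ modulo $\bar{\m}^k$ whose conjugation renders $\bar{c}^{(k)}$ quasi-diagonal over $\bar{\A}$. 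Lifting the off-identity entries of $\bar{\alpha}_k$ arbitrarily from $\bar{\m}^k$ to $\m^k$ produces $\alpha_k = \id + g_k$ with $g_k$ valued in $\m^k$; invertibility of $\alpha_k$ over $\A$ follows from completeness via the Neumann series $\alpha_k^{-1} = \sum_{j \ge 0}(-g_k)^j$.

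Setting $\beta_k := \alpha_1 \circ \cdots \circ \alpha_k$, one has $c^{(k+1)} = (\beta_k \tensor \beta_k)^{-1} \circ c \circ (\beta_k \tensor \beta_k)$. Since $\alpha_{k+1} \equiv \id$ modulo $\m^{k+1}$, the matrix entries of $\beta_k$ are Cauchy in the $\m$-adic topology, and completeness of $\A$ yields an entry-wise limit $\alpha \in \End_\A(\A{Q})$ with $\alpha \equiv \id$ modulo $\m$. A Neumann-series argument again provides an inverse for $\alpha$, and the conjugates $c^{(k+1)}$ converge to $\tilde{c} := (\alpha \tensor \alpha)^{-1} \circ c \circ (\alpha \tensor \alpha)$. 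Because every non-quasi-diagonal entry of $c^{(k+1)}$ lies in $\m^{k+1}$, the corresponding entry of $\tilde{c}$ lies in $\bigcap_k \m^k = 0$ by the Hausdorffness of the $\m$-adic topology. Thus $\tilde{c}$ is a quasi-diagonal Yang-Baxter deformation of $c_Q$ equivalent to $c$, which is exactly the claim of the theorem.

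The main obstacle is the limit construction, that is, verifying that the sequences $(\beta_k)$ and $(c^{(k)})$ converge to honest $\A$-linear maps on the free modules $\A{Q}$ and $\A{Q^2}$, rather than merely on their $\m$-adic completions. Completeness of $\A$ handles the entry-wise convergence, and the finiteness convention on matrices $Q \times Q \to \A$ is preserved because each $\alpha_k - \id$ has only finitely supported columns; this property survives composition and passes to the entry-wise limit. Once $\alpha$ is established as a genuine automorphism of $\A{Q}$, the passage from the infinitesimal result to the complete one is the routine limit argument sketched above, so all the deformation-theoretic content has already been packaged into Theorem \ref{thm:QuasiDiagonalCohomology} and Lemma \ref{lem:InductiveStep}.
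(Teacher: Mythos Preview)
Your proof is correct and follows essentially the same approach as the paper: inductively apply Lemma~\ref{lem:InductiveStep} to push quasi-diagonality from order $\m^k$ to $\m^{k+1}$, lift the resulting gauge transformation from $\A/\m^{k+1}$ to $\A$, and form the infinite product $\alpha = \alpha_1\alpha_2\cdots$ using completeness. Your write-up is in fact more detailed than the paper's (Neumann-series invertibility, entry-wise Cauchy convergence); one caveat is that for infinite $Q$ the claim that finite column-support ``passes to the entry-wise limit'' is not automatic, though the paper's proof does not address this point either.
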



\begin{proof}
  Starting with $c_1 := c$ for $n=1$, suppose that $c_n = c_Q f_n$
  has a deformation term $f_n$ that is quasi-diagonal modulo $\m^n$.
  By Lemma \ref{lem:InductiveStep}, there exists $\alpha_n\colon \A{Q} \to \A{Q}$
  with $\alpha_n \equiv \id_V$ modulo $\m^n$, such that 
  $c_{n+1} := (\alpha_n \tensor \alpha_n)^{-1} c_n (\alpha_n \tensor \alpha_n)$
  is given by $c_{n+1} = c_Q f_{n+1}$ with $f_{n+1}$ quasi-diagonal modulo $\m^{n+1}$.
  The lemma ensures that such a map $\bar\alpha_n$ exists modulo $\m^{n+1}$; 
  this can be lifted to a map $\alpha_n \colon \A{Q} \to \A{Q}$, 
  which is invertible because $\A$ is complete.
  Completeness of $\A$ also ensures that we can pass to the limit and define 
  the infinite product $\alpha = \alpha_1 \alpha_2 \alpha_3 \cdots$:
  for each $n \in \N$ this product is finite modulo $\m^n$.
  By construction, $(\alpha\tensor\alpha)^{-1} \, c \, (\alpha\tensor\alpha)$ 
  is quasi-diagonal and equivalent to $c$, as desired.
\end{proof}

\begin{corollary} \label{cor:RackRigiditiy}
  If $H_\YB^2(c_Q;\m/\m^2) = \m/\m^2$, then $c_Q$ is rigid over $(\A,\m)$.
\end{corollary}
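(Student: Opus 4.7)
The plan is the standard order-by-order deformation argument: starting from a Yang-Baxter deformation $c$ of $c_Q$ over $(\A,\m)$, I inductively construct gauge transformations $\alpha_n \equiv \id_V$ modulo $\m^n$ whose composition conjugates $c$ back to $c_Q$, invoking completeness of $\A$ to pass to the limit. This is essentially the proof of the preceding theorem, but with quasi-diagonality replaced by total triviality, now that the cohomological hypothesis supplies enough control over the infinitesimal picture.

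Set $c_1 := c$ and suppose inductively that $c_n \equiv c_Q$ modulo $\m^n$, written as $c_n = c_Q \circ (\id_V^{\tensor 2} + f_n)$ with $f_n$ valued in $\m^n$. The Yang-Baxter equation for $c_n$, expanded and reduced modulo $\m^{n+1}$ by the separation trick of equation~\eqref{eq:SeparationTrick}, shows that the reduction $\bar f_n$ of $f_n$ modulo $\m^{n+1}$ is a $2$-cocycle in $C_\YB^2(c_Q;\m^n/\m^{n+1})$. The hypothesis $H_\YB^2(c_Q;\m/\m^2) = \m/\m^2$ — read as saying that every infinitesimal deformation is gauge-equivalent to the trivial one (modulo the $\m/\m^2$ ambiguity carried by scalar rescaling of $c_Q$) — together with the natural transfer of coefficients from $\m/\m^2$ to the graded quotient $\m^n/\m^{n+1}$, implies that $\bar f_n$ is a coboundary $d^1 \bar g_n$. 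Lifting $\bar g_n$ to $g_n \in C_\YB^1(c_Q;\m^n)$ and setting $\alpha_n := \id_V - g_n$, I obtain $c_{n+1} := (\alpha_n\tensor\alpha_n)^{-1} c_n (\alpha_n\tensor\alpha_n) \equiv c_Q$ modulo $\m^{n+1}$, completing the inductive step.

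Completeness of $\A$ then guarantees that the formal product $\alpha := \alpha_1\alpha_2\alpha_3\cdots$ converges in $\Hom(\A{Q},\A{Q})$ — modulo any fixed $\m^N$ only finitely many factors differ from $\id_V$ — that $\alpha$ is invertible, since $\alpha \equiv \id_V$ modulo $\m$ is a unit in the complete setting, and that conjugation by $\alpha\tensor\alpha$ sends $c$ to $c_Q$, witnessing the rigidity of $c_Q$.

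The main obstacle is the passage from the cohomological hypothesis, stated only for coefficients $\m/\m^2$, to the analogous statement for the graded pieces $\m^n/\m^{n+1}$ that appear at each inductive step. In the typical settings of Examples~\ref{exm:PowerSeriesRing} and~\ref{exm:PadicIntegers}, these successive quotients are all isomorphic to $\m/\m^2$ as $\A/\m$-modules (by choice of a uniformiser), so the vanishing transfers immediately; in a more general complete $(\A,\m)$ one would need a universal-coefficients-type argument to propagate the vanishing. A secondary subtlety is the precise interpretation of the equality $H_\YB^2 = \m/\m^2$: if the right-hand side is understood as the class of scalar rescaling $c_Q \mapsto (1+\lambda)\,c_Q$, then rigidity of $c_Q$ must be read up to this scalar ambiguity, which can be absorbed into the construction of $c_n$ without disturbing the argument above.
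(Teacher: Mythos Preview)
Your approach is essentially the same as the paper's: an order-by-order elimination as in the preceding theorem, using the cohomological hypothesis at each step and completeness to pass to the limit. The paper's proof is terser but identical in structure; it explicitly interprets the hypothesis as saying that the scalar rescalings $u\cdot c_Q$ exhaust $H_\YB^2$, and concludes that any deformation is conjugate to some $u\cdot c_Q$ (so rigidity is indeed meant up to scalar, exactly as you flag in your final paragraph). The coefficient-transfer issue you raise for $\m^n/\m^{n+1}$ is likewise left implicit in the paper.
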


\begin{proof}
  For every unit $u \in 1 + \m$ we obtain a trivially deformed Yang-Baxter 
  operator $\tilde{c} = u \cdot c_Q$.  On the cochain level this 
  corresponds to a constant multiple of the identity,
  which induces an injection $\m/\m^2 \into H_\YB^2(c_Q;\m/\m^2)$.
  If these trivial classes exhaust all cohomology classes, 
  then degree-wise elimination as in the preceding proof
  conjugates any given Yang-Baxter deformation 
  of $c_Q$ to one of the form $u \cdot c_Q$.
\end{proof}

\section{Examples and applications} \label{sec:Examples}

\begin{example}[trivial quandle] 
  Consider first a trivial quandle $Q$, with $x \ast y = x$ for all $x,y$,
  so that $c_Q = \tau$ is simply the transposition operator.
  Here our results cannot add anything new, because the Yang-Baxter 
  complex $C_\YB^*$ is trivial, i.e.\ $d f = 0$ for all $f \in C_\YB^*$.
  In particular there are no infinitesimal obstructions: \emph{every} 
  deformation of $\tau$ satisfies the Yang-Baxter equation modulo $\m^2$.
  There are, however, higher-order obstructions: these form a subject 
  of their own and belong to the much deeper theory of quantum invariants
  \cite{Drinfeld:1987,Turaev:1988,Kassel:1995,KasselRossoTuraev:1997}.
\end{example}

\begin{example}[faithful quandle]
  Next we consider the other extreme, where Theorem 
  \ref{thm:QuasiDiagonalDeformation} applies most efficiently.
  Let $G$ be a centreless group, so that conjugation
  induces an isomorphism $G \isoto \Inn(G)$.
  Suppose that $Q \subset G$ is a conjugacy class that generates $G$.
  Then we have $\Inn(Q) \cong \Inn(G) \cong G$, and the inner
  representation $\rho \colon Q \to \Inn(Q)$ is injective.
  In this case every Yang-Baxter deformation of $c_Q$ 
  over a complete ring $\A$ is equivalent to a diagonal deformation.
  If $|G|$ is finite and invertible in $\A$, 
  then $c_Q$ is rigid \cite{Eisermann:2005}.
\end{example}

\begin{example}[dihedral quandle of order $3$]
  The smallest non-trivial example of a rigid operator $c_Q$
  is given by the quandle $Q = \{ (12), (13), (23) \}$, formed
  by transpositions in the symmetric group $S_3$, or equivalently 
  the set of reflections of an equilateral triangle.

  Ordering the basis $Q\times Q$ lexicographically, 
  we can represent $c_Q$ by the matrix
  \[
  \newcommand{\0}{\cdot}
  c_Q = \left(\begin{smallmatrix}
       1 & \0 & \0 & \0 & \0 & \0 & \0 & \0 & \0 \\
      \0 & \0 & \0 & \0 & \0 & \0 &  1 & \0 & \0 \\
      \0 & \0 & \0 &  1 & \0 & \0 & \0 & \0 & \0 \\
      \0 & \0 & \0 & \0 & \0 & \0 & \0 &  1 & \0 \\
      \0 & \0 & \0 & \0 &  1 & \0 & \0 & \0 & \0 \\
      \0 &  1 & \0 & \0 & \0 & \0 & \0 & \0 & \0 \\
      \0 & \0 & \0 & \0 & \0 &  1 & \0 & \0 & \0 \\
      \0 & \0 &  1 & \0 & \0 & \0 & \0 & \0 & \0 \\
      \0 & \0 & \0 & \0 & \0 & \0 & \0 & \0 &  1
    \end{smallmatrix}\right).
  \]

  In the quantum case, the initial operator $\tau$ 
  is trivial but its deformations are highly interesting.
  In the present example, the interesting part 
  is the initial operator $c_Q$ itself:
  the associated link invariant is the number 
  of $3$-colourings, as defined by R.H.\,Fox \cite{Fox:1962,Fox:1970}.

  Unlike $\tau$, the operator $c_Q$ does not admit 
  any non-trivial deformation over $\Q\fps{h}$.
  In this sense it is an isolated solution of the Yang-Baxter equation.
  We can now prove more:
\end{example}

\begin{proposition}
  For the quandle $Q = \{ (12), (13), (23) \} \subset S_3$ the associated 
  Yang-Baxter operator $c_Q$ is rigid over every complete ring.
\end{proposition}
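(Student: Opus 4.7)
The plan is to combine Theorem \ref{thm:QuasiDiagonalDeformation}, Theorem \ref{thm:QuasiDiagonalCohomology}, and Corollary \ref{cor:RackRigiditiy} with an explicit, characteristic-independent computation of $H^2_\Rack(Q;\Lambda)$. The earlier rigidity result for $c_Q$ over $\Q\fps{h}$ rested on invertibility of $|S_3|=6$; the new content here is to remove that hypothesis and cover the modular primes $p=2$ and $p=3$ as well.

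First I verify the structural input. Since $S_3$ is centreless and $Q$ generates $S_3$, the inner representation $\rho \colon Q \to \Inn(Q) \cong S_3$ is injective. Consequently behavioural equivalence reduces to equality, the quasi-diagonal subcomplex $C_\Delta^*$ coincides with the diagonal subcomplex $C_\Diag^*$, and by Proposition \ref{prop:RackYBCohomology} the latter is isomorphic to $C_\Rack^*(Q;\Lambda)$. Combined with Theorem \ref{thm:QuasiDiagonalCohomology} this gives an isomorphism $H_\YB^*(c_Q;\Lambda) \cong H_\Rack^*(Q;\Lambda)$ for every abelian group $\Lambda$.

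The remaining task is to show that $H_\Rack^2(Q;\Lambda) \cong \Lambda$, exhausted by the class of a constant cocycle. A 2-cochain is a function $\lambda \colon Q^2 \to \Lambda$, and the cocycle condition
\[
\lambda(a,b) + \lambda(a^b,c) = \lambda(a,c) + \lambda(a^c,b^c)
\]
becomes a small linear system: among the $27$ triples $(a,b,c)$ only the $18$ with $b \ne c$ are non-trivial. A direct elimination using the quandle table of $R_3$ forces $\lambda(1,1)=\lambda(2,2)=\lambda(3,3)=:k$, yields the pair-identities
\[
\lambda(1,2)+\lambda(3,2) = \lambda(2,1)+\lambda(3,1) = \lambda(1,3)+\lambda(2,3) = 2k
\]
together with one further relation, and leaves $Z_\Rack^2$ free of rank $3$ in the parameters $(k,\lambda(2,3),\lambda(3,1))$. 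The coboundaries $(\delta^1\mu)(a,b)=\mu(a^b)-\mu(a)$ realise every pair $(\lambda(2,3),\lambda(3,1)) \in \Lambda^2$ while keeping $k=0$, so $H^2_\Rack(Q;\Lambda) \cong \Lambda$, generated by $\lambda \equiv k$.

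Applying Corollary \ref{cor:RackRigiditiy} with $\Lambda=\m/\m^2$ now concludes that $c_Q$ is rigid over every complete $(\A,\m)$. The main obstacle is the cohomological step: any appeal to invertibility of $|\Inn(Q)|$ in $\A$ is unavailable, so one genuinely needs the explicit modular calculation above. Fortunately the size of $Q$ keeps the linear algebra small enough to finish by hand.
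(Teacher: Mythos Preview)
Your argument is correct and follows the same route as the paper: reduce to diagonal deformations via injectivity of $\rho$ and Theorem \ref{thm:QuasiDiagonalCohomology}, identify these with rack cohomology via Proposition \ref{prop:RackYBCohomology}, and conclude with Corollary \ref{cor:RackRigiditiy}. The only difference is that the paper cites the literature for $H_\Rack^2(Q;\m)\cong\m$, whereas you carry out the linear-algebra computation explicitly; your parametrisation of $Z_\Rack^2$ by $(k,\lambda_{23},\lambda_{31})$ and the coboundary analysis are both correct over an arbitrary coefficient group.
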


\begin{proof}
  According to \cite{Eisermann:2005}, the operator $c_Q$ is rigid 
  over every ring $\A$ in which the order $|S_3|=6$ is invertible.
  Potentially there could exist non-trivial deformations 
  in characteristic $2$ or $3$. 
  Theorem \ref{thm:QuasiDiagonalCohomology} ensures that 
  infinitesimal deformations are quasi-diagonal, 
  which means diagonal in the present example 
  because $\rho \colon Q \to \Inn(Q) = S_3$ is injective.
  According to Proposition \ref{prop:RackYBCohomology},
  diagonal deformations correspond to rack cohomology.
  A direct calculation shows that $H_\Rack^2(Q;\m) \cong \m$ 
  for all modules $\m$, see \cite{CarterEtAl:2003}, whence
  Corollary \ref{cor:RackRigiditiy} implies rigidity.
\end{proof}

\begin{example}[the other quandle of order $3$]
  \newcommand{\la}[1]{\lambda_{#1}}

  The smallest quandle that is non-trivial yet deformable is 
  $Q = \{a,b,c\}$ with operation given by the table below.
  Ordering the basis $Q\times Q$ lexicographically, 
  we obtain the matrix of $c_Q$ as indicated.
  We have $\Inn(Q) = \Zmod{2}$:  if $2$ is invertible in $\K$, 
  then $H_\YB^2(c_Q;\K)$ is free of rang $9$ and can easily 
  be made explicit using the results of \cite{Eisermann:2005}.
  We state it here in form of a $9$-parameter 
  deformation $c = c_Q \circ ( \id_V^{\tensor 2} + f )$, where $f \in C_\YB^2(c_Q,\m)$
  is quasi-diagonal and equivariant under $\Inn(Q) \times \Inn(Q)$:
  \[
  Q = \left\{
  \begin{tabular}{c|ccc}
    $\ast$ & $a$ & $b$ & $c$ \\
    \hline
    $a$    & $a$ & $a$ & $b$ \\
    $b$    & $b$ & $b$ & $a$ \\
    $c$    & $c$ & $c$ & $c$
  \end{tabular} \right. ,
  \quad
  \newcommand{\0}{\cdot}
  c_Q = \left(\begin{smallmatrix}
       1 & \0 & \0 & \0 & \0 & \0 & \0 & \0 & \0 \\
      \0 & \0 & \0 &  1 & \0 & \0 & \0 & \0 & \0 \\
      \0 & \0 & \0 & \0 & \0 & \0 &  1 & \0 & \0 \\
      \0 &  1 & \0 & \0 & \0 & \0 & \0 & \0 & \0 \\
      \0 & \0 & \0 & \0 &  1 & \0 & \0 & \0 & \0 \\
      \0 & \0 & \0 & \0 & \0 & \0 & \0 &  1 & \0 \\
      \0 & \0 & \0 & \0 & \0 &  1 & \0 & \0 & \0 \\
      \0 & \0 &  1 & \0 & \0 & \0 & \0 & \0 & \0 \\
      \0 & \0 & \0 & \0 & \0 & \0 & \0 & \0 &  1
    \end{smallmatrix}\right) ,
  \quad
  f = \left(\begin{smallmatrix}
      \la{1} & \la{2} & \0     & \la{3} & \la{4} & \0     & \0     & \0     & \0     \\
      \la{2} & \la{1} & \0     & \la{4} & \la{3} & \0     & \0     & \0     & \0     \\
      \0     & \0     & \la{5} & \0     & \0     & \la{6} & \0     & \0     & \0     \\
      \la{3} & \la{4} & \0     & \la{1} & \la{2} & \0     & \0     & \0     & \0     \\
      \la{4} & \la{3} & \0     & \la{2} & \la{1} & \0     & \0     & \0     & \0     \\
      \0     & \0     & \la{6} & \0     & \0     & \la{5} & \0     & \0     & \0     \\
      \0     & \0     & \0     & \0     & \0     & \0     & \la{7} & \la{8} & \0     \\
      \0     & \0     & \0     & \0     & \0     & \0     & \la{8} & \la{7} & \0     \\
      \0     & \0     & \0     & \0     & \0     & \0     & \0     & \0     & \la{9}   
    \end{smallmatrix}\right) .
  \]

  We remark that the deformed operator $c$ 
  satisfies the Yang-Baxter equation to all orders,
  and not only infinitesimally modulo $\m^2$.

  A priori there could exist more deformations over $\Zmod{2}$,
  but a computer calculation shows that $\dim H_\YB^2(c_Q;\Zmod{2}) = 9$.
  So there are no additional deformations in the modular case.
\end{example}



\begin{example}[dihedral quandle of order $4$]
  \newcommand{\la}[1]{\lambda_{#1}}
  \newcommand{\pa}{\alpha} \newcommand{\pb}{\beta} \newcommand{\pc}{\gamma} \newcommand{\pd}{\delta}

  There exist quandles for which the modular case 
  offers more Yang-Baxter deformations than the rational case.
  We wish to illustrate this by an example where 
  the additional deformations are not diagonal but quasi-diagonal.
  The smallest such example is given by the set of reflections of a square, 
  \[
  Q=\{\; (13)\,,\; (24)\,,\; (12)(34)\,,\; (14)(23) \;\} .
  \]
  This set is closed under conjugation 
  in the symmetric group $S_4$, hence a quandle.
  With respect to the lexicographical basis, $c_Q$ is 
  represented by the following permutation matrix:
  \[
  \newcommand{\0}{\cdot}
  c_Q = 
  \left( \begin{smallmatrix}
       1 & \0 & \0 & \0 & \0 & \0 & \0 & \0 & \0 & \0 & \0 & \0 & \0 & \0 & \0 & \0 \\
      \0 & \0 & \0 & \0 &  1 & \0 & \0 & \0 & \0 & \0 & \0 & \0 & \0 & \0 & \0 & \0 \\
      \0 & \0 & \0 & \0 & \0 & \0 & \0 & \0 & \0 & \0 & \0 & \0 &  1 & \0 & \0 & \0 \\
      \0 & \0 & \0 & \0 & \0 & \0 & \0 & \0 &  1 & \0 & \0 & \0 & \0 & \0 & \0 & \0 \\
      \0 &  1 & \0 & \0 & \0 & \0 & \0 & \0 & \0 & \0 & \0 & \0 & \0 & \0 & \0 & \0 \\
      \0 & \0 & \0 & \0 & \0 &  1 & \0 & \0 & \0 & \0 & \0 & \0 & \0 & \0 & \0 & \0 \\
      \0 & \0 & \0 & \0 & \0 & \0 & \0 & \0 & \0 & \0 & \0 & \0 & \0 &  1 & \0 & \0 \\
      \0 & \0 & \0 & \0 & \0 & \0 & \0 & \0 & \0 &  1 & \0 & \0 & \0 & \0 & \0 & \0 \\
      \0 & \0 & \0 & \0 & \0 & \0 &  1 & \0 & \0 & \0 & \0 & \0 & \0 & \0 & \0 & \0 \\
      \0 & \0 &  1 & \0 & \0 & \0 & \0 & \0 & \0 & \0 & \0 & \0 & \0 & \0 & \0 & \0 \\
      \0 & \0 & \0 & \0 & \0 & \0 & \0 & \0 & \0 & \0 &  1 & \0 & \0 & \0 & \0 & \0 \\
      \0 & \0 & \0 & \0 & \0 & \0 & \0 & \0 & \0 & \0 & \0 & \0 & \0 & \0 &  1 & \0 \\
      \0 & \0 & \0 & \0 & \0 & \0 & \0 &  1 & \0 & \0 & \0 & \0 & \0 & \0 & \0 & \0 \\
      \0 & \0 & \0 &  1 & \0 & \0 & \0 & \0 & \0 & \0 & \0 & \0 & \0 & \0 & \0 & \0 \\
      \0 & \0 & \0 & \0 & \0 & \0 & \0 & \0 & \0 & \0 & \0 &  1 & \0 & \0 & \0 & \0 \\
      \0 & \0 & \0 & \0 & \0 & \0 & \0 & \0 & \0 & \0 & \0 & \0 & \0 & \0 & \0 &  1 \\
    \end{smallmatrix} \right) . 
  %
  \]

  By construction, this matrix is a solution 
  of the Yang-Baxter equation over any ring $\A$.
  According to \cite{Eisermann:2005} it admits a $16$-parameter deformation 
  $c(\lambda) = c_Q \circ ( \id_V^{\tensor 2} + f )$ given by the following matrix,
  which is quasi-diagonal and equivariant under $\Inn(Q) \times \Inn(Q)$:
  \[
  \newcommand{\0}{\cdot}
  f =   
  \left( \begin{smallmatrix}
      \la{ 1}&\la{ 2}&\0&\0&\la{ 3}&\la{ 4}&\0&\0&\0&\0&\0&\0&\0&\0&\0&\0\\
      \la{ 2}&\la{ 1}&\0&\0&\la{ 4}&\la{ 3}&\0&\0&\0&\0&\0&\0&\0&\0&\0&\0\\
      \0&\0&\la{ 5}&\la{ 6}&\0&\0&\la{ 7}&\la{ 8}&\0&\0&\0&\0&\0&\0&\0&\0\\
      \0&\0&\la{ 6}&\la{ 5}&\0&\0&\la{ 8}&\la{ 7}&\0&\0&\0&\0&\0&\0&\0&\0\\
      \la{ 3}&\la{ 4}&\0&\0&\la{ 1}&\la{ 2}&\0&\0&\0&\0&\0&\0&\0&\0&\0&\0\\
      \la{ 4}&\la{ 3}&\0&\0&\la{ 2}&\la{ 1}&\0&\0&\0&\0&\0&\0&\0&\0&\0&\0\\
      \0&\0&\la{ 7}&\la{ 8}&\0&\0&\la{ 5}&\la{ 6}&\0&\0&\0&\0&\0&\0&\0&\0\\
      \0&\0&\la{ 8}&\la{ 7}&\0&\0&\la{ 6}&\la{ 5}&\0&\0&\0&\0&\0&\0&\0&\0\\
      \0&\0&\0&\0&\0&\0&\0&\0&\la{ 9}&\la{10}&\0&\0&\la{11}&\la{12}&\0&\0\\
      \0&\0&\0&\0&\0&\0&\0&\0&\la{10}&\la{ 9}&\0&\0&\la{12}&\la{11}&\0&\0\\
      \0&\0&\0&\0&\0&\0&\0&\0&\0&\0&\la{13}&\la{14}&\0&\0&\la{15}&\la{16}\\
      \0&\0&\0&\0&\0&\0&\0&\0&\0&\0&\la{14}&\la{13}&\0&\0&\la{16}&\la{15}\\
      \0&\0&\0&\0&\0&\0&\0&\0&\la{11}&\la{12}&\0&\0&\la{ 9}&\la{10}&\0&\0\\
      \0&\0&\0&\0&\0&\0&\0&\0&\la{12}&\la{11}&\0&\0&\la{10}&\la{ 9}&\0&\0\\
      \0&\0&\0&\0&\0&\0&\0&\0&\0&\0&\la{15}&\la{16}&\0&\0&\la{13}&\la{14}\\
      \0&\0&\0&\0&\0&\0&\0&\0&\0&\0&\la{16}&\la{15}&\0&\0&\la{14}&\la{13}
    \end{smallmatrix} \right).   
  \]

  For every choice of parameters $\la{1},\dots,\la{16} \in \m$ 
  the matrix $c(\lambda)$ satisfies the Yang-Baxter equation
  (to all orders) and thus deforms $c(0)=c_Q$ over $(\A,\m)$.
  
  The quandle $Q$ has the inner automorphism group 
  $\Inn(Q) \cong \Zmod{2} \times \Zmod{2}$, of order $4$.
  If $2$ is invertible in $\K$, then $H_\YB^*(c_Q;\K)$ can be calculated 
  using the results of \cite{Eisermann:2005} and is easily seen 
  to be free of rank $16$ such that $f$ is the most general deformation.
  In particular we have $\dim H_\YB^*(c_Q;\K)=16$ for every field $\K$ 
  of characteristic $\ne 2$. 

  Over $\K = \Zmod{2}$, however, a computer calculation 
  shows that $\dim H_\YB^2(c_Q;\Zmod{2}) = 20$, which means 
  that there exists a $20$-parameter deformation, at least infinitesimally.
  We state the result in the form $c = c_Q( \id_V^{\tensor 2} + f + g )$ as follows.

  First we have the $16$-parameter family that appears in every characteristic:
  \[
  \newcommand{\0}{\cdot}
  f =   
  \left( \begin{smallmatrix}
      \la{ 1}&\la{ 2}&\0&\0&\la{ 3}&\la{ 4}&\0&\0&\0&\0&\0&\0&\0&\0&\0&\0\\
      \la{ 2}&\la{ 1}&\0&\0&\la{ 4}&\la{ 3}&\0&\0&\0&\0&\0&\0&\0&\0&\0&\0\\
      \0&\0&\la{ 5}'&\la{ 6}&\0&\0&\la{ 7}'&\la{ 8}&\0&\0&\0&\0&\0&\0&\0&\0\\
      \0&\0&\la{ 6}&\la{ 5}'&\0&\0&\la{ 8}&\la{ 7}'&\0&\0&\0&\0&\0&\0&\0&\0\\
      \la{ 3}&\la{ 4}&\0&\0&\la{ 1}&\la{ 2}&\0&\0&\0&\0&\0&\0&\0&\0&\0&\0\\
      \la{ 4}&\la{ 3}&\0&\0&\la{ 2}&\la{ 1}&\0&\0&\0&\0&\0&\0&\0&\0&\0&\0\\
      \0&\0&\la{ 7}''&\la{ 8}&\0&\0&\la{ 5}''&\la{ 6}&\0&\0&\0&\0&\0&\0&\0&\0\\
      \0&\0&\la{ 8}&\la{ 7}''&\0&\0&\la{ 6}&\la{ 5}''&\0&\0&\0&\0&\0&\0&\0&\0\\
      \0&\0&\0&\0&\0&\0&\0&\0&\la{ 9}'&\la{10}&\0&\0&\la{11}'&\la{12}&\0&\0\\
      \0&\0&\0&\0&\0&\0&\0&\0&\la{10}&\la{ 9}'&\0&\0&\la{12}&\la{11}'&\0&\0\\
      \0&\0&\0&\0&\0&\0&\0&\0&\0&\0&\la{13}&\la{14}&\0&\0&\la{15}&\la{16}\\
      \0&\0&\0&\0&\0&\0&\0&\0&\0&\0&\la{14}&\la{13}&\0&\0&\la{16}&\la{15}\\
      \0&\0&\0&\0&\0&\0&\0&\0&\la{11}''&\la{12}&\0&\0&\la{ 9}''&\la{10}&\0&\0\\
      \0&\0&\0&\0&\0&\0&\0&\0&\la{12}&\la{11}''&\0&\0&\la{10}&\la{ 9}''&\0&\0\\
      \0&\0&\0&\0&\0&\0&\0&\0&\0&\0&\la{15}&\la{16}&\0&\0&\la{13}&\la{14}\\
      \0&\0&\0&\0&\0&\0&\0&\0&\0&\0&\la{16}&\la{15}&\0&\0&\la{14}&\la{13}
    \end{smallmatrix} \right).   
  \]
  
  For every choice of parameters in $\m$ 
  the matrix $c(\lambda) = c_Q \circ ( \id_V^{\tensor 2} + f )$ satisfies the Yang-Baxter
  equation modulo $\m^2$.  It even satisfies the Yang-Baxter equation
  to any order provided that $\la{5}'=\la{5}''$, \; $\la{7}'=\la{7}''$, \;  
  $\la{9}'=\la{9}''$, \; $\la{11}'=\la{11}''$. 

  We set $\la{5}=\la{5}'+\la{5}''$, \; $\la{7}=\la{7}'+\la{7}''$, \;  
  $\la{9}=\la{9}'+\la{9}''$, \; $\la{11}=\la{11}'+\la{11}''$. 
  Two deformations $c(\lambda)$ and $c(\tilde\lambda)$ are gauge equivalent 
  if and only if $\lambda_k = \tilde\lambda_k$ for all $k=1,\dots,16$.
  We have chosen the redundant formulation above in order 
  to highlight the symmetry resp.\ the symmetry breaking.
  If $2$ were invertible we would simply set 
  $\la{5}'=\la{5}''=\frac{1}{2}\la{5}$ etc.
  In characteristic $2$ we can realize $\la{5}=1$ either by 
  $\la{5}'=1$ and $\la{5}''=0$, or by $\la{5}'=0$ and $\la{5}''=1$.
  Both deformations are gauge equivalent, but no symmetric form is possible.

  Next we have $4$-parameters deformation that appears only in characteristic $2$:
  \[
  \newcommand{\0}{\cdot}
  g =   
  \left( \begin{smallmatrix}
      \pa' &\0   &\0   &\0   &\pb' &\0   &\0   &\0   &\0   &\0   &\0   &\0   &\0   &\0   &\0   &\0   \\
      \0   &\pa''&\0   &\0   &\0   &\pb''&\0   &\0   &\0   &\0   &\0   &\0   &\0   &\0   &\0   &\0   \\
      \0   &\0   &\pa' &\0   &\0   &\0   &\pb' &\0   &\0   &\0   &\0   &\0   &\0   &\0   &\0   &\0   \\
      \0   &\0   &\0   &\pa''&\0   &\0   &\0   &\pb''&\0   &\0   &\0   &\0   &\0   &\0   &\0   &\0   \\
      \pb''&\0   &\0   &\0   &\pa''&\0   &\0   &\0   &\0   &\0   &\0   &\0   &\0   &\0   &\0   &\0   \\
      \0   &\pb' &\0   &\0   &\0   &\pa' &\0   &\0   &\0   &\0   &\0   &\0   &\0   &\0   &\0   &\0   \\
      \0   &\0   &\pb''&\0   &\0   &\0   &\pa''&\0   &\0   &\0   &\0   &\0   &\0   &\0   &\0   &\0   \\
      \0   &\0   &\0   &\pb' &\0   &\0   &\0   &\pa' &\0   &\0   &\0   &\0   &\0   &\0   &\0   &\0   \\
      \0   &\0   &\0   &\0   &\0   &\0   &\0   &\0   &\pc' &\0   &\0   &\0   &\pd' &\0   &\0   &\0   \\
      \0   &\0   &\0   &\0   &\0   &\0   &\0   &\0   &\0   &\pc''&\0   &\0   &\0   &\pd''&\0   &\0   \\
      \0   &\0   &\0   &\0   &\0   &\0   &\0   &\0   &\0   &\0   &\pc' &\0   &\0   &\0   &\pd' &\0   \\
      \0   &\0   &\0   &\0   &\0   &\0   &\0   &\0   &\0   &\0   &\0   &\pc''&\0   &\0   &\0   &\pd''\\
      \0   &\0   &\0   &\0   &\0   &\0   &\0   &\0   &\pd''&\0   &\0   &\0   &\pc''&\0   &\0   &\0   \\
      \0   &\0   &\0   &\0   &\0   &\0   &\0   &\0   &\0   &\pd' &\0   &\0   &\0   &\pc' &\0   &\0   \\
      \0   &\0   &\0   &\0   &\0   &\0   &\0   &\0   &\0   &\0   &\pd''&\0   &\0   &\0   &\pc''&\0   \\
      \0   &\0   &\0   &\0   &\0   &\0   &\0   &\0   &\0   &\0   &\0   &\pd' &\0   &\0   &\0   &\pc' \\
    \end{smallmatrix} \right).   
  \]

  For every choice of parameters in $\m$ the matrix $c_Q \circ ( \id_V^{\tensor 2} + g )$ 
  satisfies the Yang-Baxter equation modulo $\m^2$.  
  Two such deformations are gauge equivalent 
  if and only if they share the same values
  $\pa=\pa'+\pa''$, \; $\pb=\pb'+\pb''$, \; $\pc=\pc'+\pc''$, \; $\pd=\pd'+\pd''$.
  They satisfy the Yang-Baxter equation to order $\m^3$ if and only if 
  $\pa'=\pa''$, \; $\pb'=\pb''$, \; $\pc'=\pc''$, \; $\pd'=\pd''$.
\end{example}

\begin{example}[colouring polynomials] \label{exm:A5}
  We conclude with an example where the modular case 
  provides non-trivial diagonal deformations and 
  interesting knot invariants arise at the infinitesimal level.
  
  Consider the alternating group $G = A_5$ and 
  the conjugacy class $Q = (12345)^G$ of order $12$.  
  The knot invariant associated to $c_Q$ counts 
  for each knot $K \subset \mathbb{S}^3$ the number of 
  knot group representations $\pi_1(\mathbb{S}^3 \minus K) \to G$
  sending meridians of $K$ to elements of $Q$.
  According to \cite{Eisermann:2005} 
  the operator $c_Q$ has only trivial deformations
  over $\Q\fps{h}$ or any ring $\A$ with $5! \in \A^\times$.

  The modular case is more interesting:  
  if we consider $\A = \Z_5[h]/(h^2)$,
  then $c_Q$ does allow non-trivial deformations
  that are topologically interesting \cite[Exm.\,1.3]{Eisermann:2007}.
  According to Theorem \ref{thm:QuasiDiagonalCohomology},
  \emph{all} infinitesimal deformations of $c_Q$ are encoded by rack cohomology, 
  which has been intensely studied in recent years and is fairly well understood.
  The associated knot invariants can be identified as \emph{colouring polynomials},
  counting knot group representations $\pi_1(\mathbb{S}^3 \minus K) \to G$
  while keeping track of longitudinal information \cite{Eisermann:2007}.
\end{example}





\section{Open questions} \label{sec:OpenQuestions}

\subsection{From infinitesimal to complete deformations}

As explained in \sref{sec:Diagonal}, rack cohomology 
$H_\Rack^2(Q;\K)$ encodes infinitesimal deformations of $c_Q$,
that is, deformations over $\A = \K[h]/(h^2)$.  
Even at the infinitesimal level this approach 
leads to interesting knot invariants, 
as illustrated by Example \ref{exm:A5} above.
In the framework of Yang-Baxter deformations,
the following generalization appears natural:

\begin{question}
  What can be said about complete deformations,
  that is, deformations of $c_Q$ 
  over the power series ring $\K\fps{h}$
  or the $p$-adic integers $\Z_{p}$?
\end{question}

Higher order obstructions are encoded in 
$H_\YB^3(c_Q,\m^2/\m^3)$, and in general seem to be non-trivial.
For deformations over $\Q\fps{h}$ this question has been completely 
solved in \cite{Eisermann:2005}.  The modular case 
is still open and potentially more interesting.

\begin{question}
  Given a deformation of $c_Q$,
  what sort of topological information is contained 
  in the associated knot invariant?
\end{question}

For knot invariants coming from rack or quandle cohomology,
this question was answered in \cite{Eisermann:2007}.
For non-diagonal deformations the question is still open.
Notice that the problem gets more complicated 
and more intriguing as we approach the quantum case:
the closer $Q$ is to the trivial quandle, the more deformations will appear.
Their topological interpretation, however, becomes more difficult, 
and for the time being remains mysterious.

\subsection{From racks to biracks} 

Given a set $Q$ and a bijective map $c \colon Q \times Q \to Q \times Q$,
we can formulate the set-theoretic Yang-Baxter equation \cite{Drinfeld:1990} as
\[
(\id \times c)(c \times \id)(\id \times c) 
= (c \times \id)(\id \times c)(c \times \id) .
\]

In general $c$ will have the form $c(x,y) = ( x \rhd y, x \lhd y )$
with two binary operations $\rhd,\lhd \colon Q \times Q \to Q$, 
see \cite{ESS:1999,LYZ:2000} for details.
Recently, Kauffman's theory of virtual knots \cite{Kauffman:1999}
has rekindled interest in such set-theoretic solutions $(Q,\rhd,\lhd)$ called 
\emph{biracks} or \emph{biquandles} \cite{FJK:2004,KauffmanManturov:2005}.
Racks correspond to the case where the operation $x \rhd y = y$ 
is trivial whereas $x \lhd y = x^y$ is the rack operation.

\begin{question}
  Can our results be extended to set-theoretic solutions
  of the Yang-Baxter equation that do not come from racks?
\end{question}

Our notion of Yang-Baxter cohomology \cite{Eisermann:2005} 
has been conceived for arbitrary Yang-Baxter operators, 
and in particular it covers set-theoretic solutions 
such as biracks and biquandles above.  The restricted 
setting of diagonal deformations has been studied 
by Carter et al.\ \cite{CarterEtAl:2004}.
More general deformations still need to be examined.


\bibliographystyle{amsplain}
\bibliography{yabacohom}

\end{document}